\documentclass{article}
\usepackage[utf8]{inputenc}
\usepackage{amsfonts}
\usepackage{mathtools}
\usepackage{amsmath}
\usepackage{amssymb}
\usepackage{authblk}
\usepackage{natbib}
\usepackage{babel}
\usepackage{graphicx}
\usepackage{graphics}
\usepackage[font=small,labelfont=bf]{caption}
\usepackage{xcolor}
\usepackage{bbm}
\usepackage[margin = 1.2in]{geometry}
\usepackage{hyperref}
\usepackage{pgfplots}
\usepackage{amsthm}
\pgfplotsset{width = 5cm, compat = 1.17}

\title{On High Dimensional Behaviour of Some\\ Two-Sample Tests Based on Ball Divergence}
\author{Bilol Banerjee}
\author{Anil K. Ghosh}
\affil{Theoretical Statistics and Mathematics Unit\\ Indian Statistical Institute, Kolkata}
\date{\null}

\renewcommand{\P}{\mathbb{P}}

\newtheorem{prop}{Proposition}[section]
\newtheorem{thm}{Theorem}[section]
\newtheorem{lemma}{Lemma}[section]

\newtheorem{rem}{Remark}

\newtheorem{lemmaA}{Lemma A.\ignorespaces}

\newcommand{\muvec}{\mbox{\boldmath $\mu$}}
\newcommand{\sigmat}{\mbox{\boldmath $\Sigma$}}

\renewcommand{\P}{\mathbb{P}}

\newcommand{\R}{\mathbb{R}}
\newcommand{\E}{\mathbb{E}}

\newcommand{\tikzcircle}[2][red,fill=red]{\tikz[baseline=-0.5ex]\draw[#1,radius=#2] (0,0) circle ;}
\newcommand{\tikzcirclev}[2][violet,fill=violet]{\tikz[baseline=-0.5ex]\draw[#1,radius=#2] (0,0) circle ;}
\newcommand{\tikzcirclep}[2][purple,fill=purple]{\tikz[baseline=-0.5ex]\draw[#1,radius=#2] (0,0) circle ;}
\newcommand{\tikzcirclem}[2][magenta,fill=magenta]{\tikz[baseline=-0.5ex]\draw[#1,radius=#2] (0,0) circle ;}

\begin{document}

\maketitle
\begin{abstract}
  In this article, we propose some two-sample tests based on \text{ball divergence} and investigate their high dimensional behavior. First, we study their behavior for High Dimension, Low Sample Size (HDLSS) data, and under appropriate regularity conditions, we establish their consistency in the HDLSS regime, where the dimension of the data grows to infinity while the sample sizes from the two distributions remain fixed. Further, we show that these conditions can be relaxed when the sample sizes also increase with the dimension, and in such cases, consistency can be proved even for shrinking alternatives. We use a simple example involving two normal distributions to prove that even when there are no consistent tests in the HDLSS regime, the powers of the proposed tests can converge to unity if the sample sizes increase with the dimension at an appropriate rate. This rate is obtained by establishing the minimax rate optimality of our tests over a certain class of alternatives.  Several simulated and benchmark data sets are analyzed to compare the performance of these proposed tests with the state-of-the-art methods that can be used for testing the equality of two high-dimensional probability distributions. \\     
     \textbf{Keywords:} Ball divergence; Energy statistics; High dimensional asymptotics; Minimax rate optimality; Permutation tests; Shrinking alternatives.
\end{abstract}

\section{Introduction}

In a two-sample problem, we test for the equality of two $d$-dimensional probability distributions $F$ and $G$ based on $n$ independent copies ${\bf X}_1,{\bf X}_2,\ldots,{\bf X}_n$ of ${\bf X} \sim F$ and $m$ independent
copies ${\bf Y}_1,{\bf Y}_2,\ldots,{\bf Y}_m$  of ${\bf Y} \sim G$. This problem is well investigated in the literature, and several tests are available for it. In the parametric regime, we often assume $F$ and $G$ to be Gaussian, and test for the equality of their location and/or scale parameters. Several nonparametric tests are also available, especially for $d=1$. While the Wilcoxon-Mann-Whitney test is used for the univariate two-sample location problem, the Wald-Wolfowitz run test, the Kolmogorov-Smirnov test, and the Camer-von-Mises test \citep[see. e.g.,][]{hollander2013nonparametric,gibbons2014nonparametric} are applicable to general two-sample problems. Using the idea of a minimum spanning tree,
\cite{friedman1979multivariate} generalized the Wald-Wolfowitz run test and the Kolmogorov-Smirnov test to higher dimensions.  \cite{baringhaus2004new} proposed a test based on inter-point distances, which can be viewed as a multivariate generalization of the Cramer-von-Mises test through projection averaging.
\cite{szekely2004testing} and \cite{aslan2005new} also used inter-point distances to come up with tests based on  energy statistics. \cite{schilling1986multivariate} and \cite{henze1988multivariate}   developed  multivariate two-sample tests based on nearest-neighbor type coincidences. \cite{rosenbaum2005exact} proposed a distribution-free test based on optimal non-bipartite matching. \cite{gretton2012kernel} used the notion of maximum mean discrepancy (MMD) to construct a multivariate two-sample test based on kernel mean embedding of two probability distributions. 
These multivariate two-sample tests are consistent in the classical asymptotic regime. For any fixed dimension $d$, the powers of these tests converge to unity as the sample sizes $n$ and $m$ increase. Since these tests are based on pairwise distances among the observations, they can be conveniently used for high-dimensional data even when the dimension is much larger than the combined sample size. But, most of them  often perform poorly in the high dimension, low sample size (HDLSS) situations, especially when the scale difference between $F$ and $G$ dominates their location difference \citep[see, e.g.,][]{biswas2014nonparametric}. 

Following the seminal paper by \cite{hall2005geometric}, the HDLSS regime has received increasing attention. Over the last ten years, several two-sample tests have been proposed for HDLSS data.
\cite{wei2016direction,ghosh2016distribution,srivastava2016raptt} proposed some tests based on linear projections, which are mainly useful for two-sample location problems. \cite{biswas2014nonparametric} and \cite{tsukada2019high} proposed some general two-sample tests based on averages of inter-point distances. 
Under some appropriate assumptions, these two tests turn out to be consistent in both classical and HDLSS asymptotic regimes but nothing is known about their asymptotic behavior when the sample sizes increase with the dimension.
Moreover, they are not robust against outliers generated from heavy-tailed distributions.  \cite{kim2020robust} developed a robust multivariate test based on projection averaging but this test is applicable only when the distances between the observations are measured using the Euclidean metric. 
Some graph-based two-sample tests have also been proposed in the literature.  \cite{mondal2015high} developed a high-dimensional two-sample test based on nearest neighbors. 
\cite{biswas2014distribution} constructed a multivariate run test based on the shortest Hamiltonian path (SHP). \cite{liu2011triangle} proposed a test based on triangles formed by the observations. Under some assumptions on the underlying distributions, these tests have consistency in the HDLSS asymptotic regime. But in the classical asymptotic regime, these graph-based tests usually have poor powers against local alternatives \citep[see, e.g.,][]{bhattacharya2019general}.
Even the large sample consistency of the SHP-based run test and the triangle test is yet to be proved. Also, it is not known how these tests perform when the dimension and the sample sizes grow simultaneously. 
This type of asymptotic behavior has been studied for some  two-sample tests for location \citep[see, e.g.,][]{bai1996effect,chen2010two,srivastava2013two} and scale \citep[see,e.g.,][]{li2012two,cai2013two,ishii2019equality,zheng2020testing}, but for the general two-sample test, the literature is scarce. 

In this article, we propose some two-sample tests based on ball divergence and study their high dimensional behavior not only under HDLSS setup but also in situations, where the dimension and sample sizes grow simultaneously. In the process, we also establish the minimax rate optimality of the proposed tests over a certain class of alternatives associated with ball divergence. Extensive simulation studies are carried out to compare the performance of our tests with some state-of-the-art methods. 

The article is organized as follows.
In Section 2, we construct a test statistic using $\ell_2$ distances among the observations and propose a test based on the permutation principle. The large-sample consistency of this permutation test has also been proved. In section 3, we study the performance of this test in the HDLSS regime. We observe that this test based on the $\ell_2$ distance may fail to discriminate between two high dimensional distributions differing outside the first two moments. 
To take care of this problem, we propose tests based on other appropriate distance functions and prove their high dimensional consistency for a broader class of alternatives. In Section 4, we establish the minimax rate optimality of the proposed tests and investigate their asymptotic behavior when the sample sizes also increase with the dimension. In this setup, we prove the consistency of our tests under shrinking alternatives. Some simulated and real data sets are analyzed in Section 5 to evaluate the empirical performance of our tests. Finally, Section 6 contains a brief summary of the work, and ends with a discussion on some possible directions for future research. All proofs and mathematical details are deferred to the Appendix.


\section{The proposed test based on ball divergence}
\label{The proposed test}


Let ${\bf X} \sim F$ and ${\bf Y}\sim G$ be two $d$-dimensional random variables taking values on a separable metric space $(\mathbb{R}^d,\rho)$, for $\rho$ being the metric on ${\mathbb R}^d$. We know that $F$ and $G$ differ if and only if there exists a ball ${\mathbb B}({\bf u},\epsilon): = \{{\bf v}\in \mathbb{R}^d\mid \rho({\bf v},{\bf u})\leq\epsilon\}$ such that $F({\mathbb B}({\bf u},\epsilon))\not = G({\mathbb B}({\bf u},\epsilon))$. Therefore, for any ${\bf u}\in {\mathbb R}^d$ and $\epsilon>0$, $|F({\mathbb B}({\bf u},\epsilon)) - G({\mathbb B}({\bf u},\epsilon))|$ gives a measure of difference between $F$ and $G$ in a neighborhood of ${\bf u}$. So, when we have two sets $\mathcal{X} = \{{\bf X}_1,{\bf X}_2,\ldots,{\bf X}_n\}$ and $\mathcal{Y} = \{{\bf Y}_1,{\bf Y}_2,\ldots,{\bf Y}_m\}$ of independent realizations of ${\bf X}$ and ${\bf Y}$, respectively, we can choose ${\bf U}_i$ and ${\bf U}_j$ ($i=1,2,\ldots,N$) from the pooled sample $\mathcal{U} = \{{\bf U}_1={\bf X}_1,\ldots,{\bf U}_n={\bf X}_n,{\bf U}_{n+1}={\bf Y}_1,\ldots, {\bf U}_{N}={\bf Y}_m\}=\mathcal{X}\cup\mathcal{Y}$ of size $N=m+n$ to construct the balls ${\mathbb B}_{ij}: = {\mathbb B}({\bf U}_i,\rho({\bf U}_j,{\bf U}_i))$ and compute the differences $D_{ij}=\big|\hat{F}_{ij}({\mathbb B}_{ij})-\hat{G}_{ij}({\mathbb B}_{ij})\big|$. Here $\hat{F}_{ij}$ and $\hat{G}_{ij}$ are the empirical analogs of $F$ and $G$, obtained from ${\mathcal U}$ after deleting the observations ${\bf U}_i$ and ${\bf U}_j$ from the respective samples. One can use these differences to construct a statistic 
$T= {[N(N-1)]}^{-1} \sum_{i \neq j} D_{ij}^2$ and reject the null hypothesis $H_0:F=G$ for higher values of it. However, to reduce the computing cost, here we consider only those cases, where ${\bf U}_i$ and ${\bf U}_j$ come from the same distribution. The resulting test statistic can be expressed as
\begin{align*}
T_{n,m}^\rho :=& \frac{1}{n(n-1)}\sum_{1\leq i\not= j\leq n}\Big\{\frac{1}{n-2}\sum_{k=1, k\not= i,j}^n\delta({\bf X}_k,{\bf X}_j,{\bf X}_i)-\frac{1}{m}\sum_{k=1}^{m}\delta({\bf Y}_{k},{\bf X}_j,{\bf X}_i)\Big\}^2\\
        &+ \frac{1}{m(m-1)}\sum_{1\leq i\not= j\leq m}\Big\{\frac{1}{n}\sum_{k=1}^n\delta({\bf X}_k,{\bf Y}_{j},{\bf Y}_{i})-\frac{1}{m-2}\sum_{k=1, k\not = i,j}^{m}\delta({\bf Y}_{k},{\bf Y}_{j},{\bf Y}_{i})\Big\}^2,
\end{align*}
where $\delta({\bf s},{\bf u},{\bf v}) = \mathbbm{1}\{\rho({\bf s},{\bf v})\leq \rho({\bf u},{\bf v})\}$, and $\mathbbm{1}\{\cdot\}$ is the indicator function. \cite{pan2018ball} proposed a similar test statistic, where they also considered the case $i=j$ (where ${\mathbb B}_{ij}$ has radius $0$). Moreover, ${\bf U}_i$ and ${\bf U}_j$ were not removed from ${\cal U}$ for computing  ${\hat F}({\mathbb B}_{ij})$ and
${\hat G}({\mathbb B}_{ij})$. One can show that $T_{n,m}^{\rho}$ is a consistent estimator (follows from Lemmas A.1 and A.2) of 
$$\Theta_\rho^2(F,G)= \int \int \{F({\mathbb B}({\bf u},\rho({\bf v},{\bf u}))-G({\mathbb B}({\bf u},\rho({\bf v},{\bf u}))\}^2 [dF({\bf u})dF({\bf v})+dG({\bf u})dG({\bf v})],$$ a measure of ball divergence between $F$ and $G$ defined in \cite{pan2018ball}.
Clearly, a large value of $T_{n,m}^{\rho}$ gives an evidence against $H_0:F=G$, and we reject $H_0$ when $T_{n,m}^{\rho}$ exceeds the critical value. For a given level of significance $\alpha~ (0<\alpha<1)$, this critical value (cut-off) is computed using the permutation method, which is described below. 

\begin{itemize}
    \item Consider a permutation $\pi$ of $\{1,2,\ldots, N\}$ and define $\mathcal{U}_\pi = \{{\bf U}_{\pi(1)},{\bf U}_{\pi(2)},\ldots, {\bf U}_{\pi(N)}\}$.
    
    \item Use $\mathcal{X}_{n,\pi} = \{{\bf U}_{\pi(1)},{\bf U}_{\pi(2)},\ldots, {\bf U}_{\pi(n)}\}$ and $\mathcal{Y}_{m,\pi} = \{{\bf U}_{\pi(n+1)},{\bf U}_{\pi(n+2)},\ldots,{\bf U}_{\pi(n+m)}\}$ as the two samples to calculate $T_{n,m,\pi}^\rho$, the permutation analog of $T_{n,m}^\rho$.
    
    \item Repeat this method for all possible permutations. If $\mathcal{S}_N$ denotes the set of all permutations of $\{1,2,\ldots,N\}$, the critical value is given by
    $$c_{1-\alpha} = \inf\{t\in\R: \frac{1}{N!}\sum_{\pi\in \mathcal{S}_N}\mathbbm{1}[T_{n,m,\pi}^\rho\leq t]\geq 1-\alpha\}.$$
  \end{itemize}
We reject $H_0$ if $T_{n,m}^\rho$ is larger than $c_{1-\alpha}$ or the corresponding p-value $\frac{1}{N!}\sum_{\pi\in \mathcal{S}_N}\mathbbm{1}[T_{n,m,\pi}^\rho \ge T_{n,m}^{\rho}]< \alpha$. The following lemma shows that $c_{1-\alpha}$  can be upper bounded by a function of $n$ and $m$ that converges to zero as both $n$ and $m$ diverge to infinity. 

\vspace{-0.04in}
\begin{lemma}
Let $\mathcal{X} = \{{\bf X}_1,{\bf X}_2,\ldots,{\bf X}_n\}$ and $\mathcal{Y} = \{{\bf Y}_1,{\bf Y}_2,\ldots, {\bf Y}_m\}$ be two sets of independent random vectors from two $d$-dimensional distributions $F$ and $G$, respectively. For any $\alpha$ $(0<\alpha<1)$, the inequality
$$
\vspace{-0.05in}
0<c_{1-\alpha}\leq \frac{2}{3\alpha(\min\{n,m\}-2)}
\vspace{-0.05in}$$
holds with probability one.
\label{permutation-cut-off}
\end{lemma}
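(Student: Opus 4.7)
The plan is to apply Markov's inequality to the empirical permutation distribution of $T_{n,m,\pi}^\rho$. Since $T_{n,m,\pi}^\rho \geq 0$, for any $t > 0$ the fraction of permutations $\pi$ with $T_{n,m,\pi}^\rho > t$ is at most $\bar{T}/t$, where $\bar{T} := (N!)^{-1}\sum_\pi T_{n,m,\pi}^\rho$. Taking $t = \bar{T}/\alpha$ forces the empirical tail probability below $\alpha$, yielding $c_{1-\alpha} \leq \bar{T}/\alpha$. The task then reduces to proving $\bar{T} \leq 2/[3(\min\{n,m\}-2)]$. Strict positivity $c_{1-\alpha} > 0$ is a separate routine check: almost surely (under continuous $F, G$ so that no interpoint-distance ties occur), the permutation distribution of $T_{n,m,\pi}^\rho$ places at most a $(1-\alpha)$-fraction of its mass at $0$, so the infimum defining $c_{1-\alpha}$ is strictly positive.

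To compute $\bar{T}$, I would decompose $T_{n,m,\pi}^\rho = T_1^\pi + T_2^\pi$, interchange the sum over $(i,j)$ with the permutation average, and for each ordered pair $(a,b)$ of distinct indices in the pooled sample, condition on $(\pi(i),\pi(j)) = (a,b)$. The remaining $N-2$ pooled indices are then partitioned uniformly at random into the $n-2$ remaining X-positions and the $m$ Y-positions, so $\tfrac{1}{n-2}\sum_{k \ne i,j}\delta - \tfrac{1}{m}\sum_k \delta$ is a normalized difference of hypergeometric counts, and a standard computation gives its variance as $\frac{Q_{ab}(N-2-Q_{ab})}{(n-2)m(N-3)}$, where $Q_{ab} := \sum_{c \ne a,b}\delta({\bf U}_c,{\bf U}_b,{\bf U}_a)$. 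Averaging yields $\bar{T}_1 = \frac{1}{N(N-1)(n-2)m(N-3)}\sum_{a \ne b} Q_{ab}(N-2-Q_{ab})$, and analogously for $\bar{T}_2$.

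The main obstacle is obtaining a closed form for $\sum_{a \ne b} Q_{ab}(N-2-Q_{ab})$. I would use a rank trick: almost surely the interpoint distances are all distinct, and for each fixed $a$, $Q_{ab}$ equals the rank of ${\bf U}_b$ in the sorted distances $\{\rho({\bf U}_c,{\bf U}_a): c \ne a\}$ minus one. As $b$ varies over the other $N-1$ indices, this rank runs through $\{1,\ldots,N-1\}$, so $\sum_{b \ne a} Q_{ab}(N-2-Q_{ab}) = \sum_{r=1}^{N-1}(r-1)(N-1-r) = (N-1)(N-2)(N-3)/6$, and summing over $a$ produces $N(N-1)(N-2)(N-3)/6$. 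This collapses the formula to $\bar{T} = \frac{N-2}{6}\big[\frac{1}{(n-2)m}+\frac{1}{n(m-2)}\big]$. A short algebraic check finishes the upper bound: assuming without loss of generality $m \leq n$, the desired inequality $\bar{T} \leq \frac{2}{3(m-2)}$ reduces after clearing denominators to $mn(m-n+2) \leq N(N-2)$, which is immediate for $m \leq n-2$ (the left side being nonpositive) and verified directly for $m \in \{n-1,n\}$.
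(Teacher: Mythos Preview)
Your proof is correct and follows the same overarching strategy as the paper: apply Markov's inequality to the nonnegative permutation statistic to get $c_{1-\alpha}\le \bar T/\alpha$, then compute the permutation mean $\bar T=\E\{T_{n,m,\pi}^\rho\mid\mathcal U\}$ exactly and bound it. Your closed form $\bar T=\frac{N-2}{6}\big[\frac{1}{(n-2)m}+\frac{1}{n(m-2)}\big]$ is algebraically identical to the paper's $\frac{1}{6}\big(\frac{1}{n}+\frac{1}{m}+\frac{1}{n-2}+\frac{1}{m-2}\big)$ (just expand $(N-2)/[(n-2)m]=(n-2+m)/[(n-2)m]$).

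The difference lies in how that mean is computed. The paper (Lemma~A.3) expands the square directly and reduces everything to the two exchangeability identities $q_1=\P\{\rho(U_{\pi(1)},U_{\pi(3)})\le\rho(U_{\pi(2)},U_{\pi(3)})\mid\mathcal U\}=1/2$ and $q_2=1/3$, which follow instantly from symmetry among three or four exchangeable indices. Your route instead conditions on the pivot pair $(a,b)$, recognizes the normalized X/Y difference as a centered hypergeometric contrast with variance $Q_{ab}(N-2-Q_{ab})/[(n-2)m(N-3)]$, and then uses the neat rank observation that for fixed $a$ the values $Q_{ab}$ enumerate $\{0,\ldots,N-2\}$ to sum exactly. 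Both arguments hinge on the same no-ties assumption, and both are short; the paper's exchangeability trick is slightly slicker, while your hypergeometric-plus-rank computation makes the combinatorial structure more explicit and would generalize more transparently to weighted variants. One cosmetic remark: your final algebraic check is more work than needed---once you have the paper's form, each of the four terms is trivially at most $1/(\min\{n,m\}-2)$, giving the bound immediately.
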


It is interesting to observe that this upper bound of $c_{1-\alpha}$ does not depend on $d$. So, irrespective of the dimension of the data, the cut-off is of the order $O_P(1/(\min\{n,m\}))$, and it converges to $0$ as $\min\{n,m\}$ diverges to infinity. Therefore, under the alternative hypothesis $H_1:F \neq G$, if $T_{m,n}^\rho$ converges to a positive constant, the power of the test converges to one. This large sample consistency of  the permutation test is asserted by the following theorem.

\begin{thm}
If $\Theta^2_{\rho}(F,G)>0$, for any given $\alpha$ $(0<\alpha<1)$, the power of the level $\alpha$ test based on $T_{m,n}^\rho$
converges to $1$ as $\min\{n,m\}$ increases to infinity.
\label{constlargesam}
\end{thm}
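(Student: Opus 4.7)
The plan is to combine the bound on the permutation cut-off from Lemma~\ref{permutation-cut-off} with a law-of-large-numbers-type result asserting that $T_{n,m}^\rho$ consistently estimates $\Theta^2_\rho(F,G)$. The paper already states (citing Lemmas A.1 and A.2) that
$$T_{n,m}^\rho \xrightarrow{P} \Theta^2_\rho(F,G) \quad \text{as } \min\{n,m\}\to\infty,$$
so I would first record this as the main analytic input and then reduce the theorem to a short final argument.

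To obtain this intermediate convergence, I would expand the squared bracket inside each of the two sums defining $T_{n,m}^\rho$. After expansion, $T_{n,m}^\rho$ is a finite linear combination of one-sample and two-sample averages whose kernels are products of at most two indicator functions $\delta(\cdot,\cdot,\cdot)$ and are therefore bounded by $1$. Regrouping these averages and dropping the diagonal corrections coming from the leave-one/two-out structure (each contributing only $O(1/n)+O(1/m)$), I can write $T_{n,m}^\rho$ as a combination of genuine U-statistics of bounded order with bounded kernels. Applying the SLLN for U-statistics to each piece and computing the limiting expectations term by term yields exactly the integral
$$\int\int\bigl\{F(\mathbb{B}(\mathbf{u},\rho(\mathbf{v},\mathbf{u})))-G(\mathbb{B}(\mathbf{u},\rho(\mathbf{v},\mathbf{u})))\bigr\}^2\bigl[dF(\mathbf{u})\,dF(\mathbf{v})+dG(\mathbf{u})\,dG(\mathbf{v})\bigr] = \Theta^2_\rho(F,G).$$

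Given this, the theorem itself follows in a few lines. Fix $\alpha\in(0,1)$ and set $\theta:=\Theta^2_\rho(F,G)>0$. By Lemma~\ref{permutation-cut-off}, with probability one $0<c_{1-\alpha}\le 2/[3\alpha(\min\{n,m\}-2)]$, so once $\min\{n,m\}$ is large enough that this upper bound is below $\theta/2$, we have
$$\P\bigl(T_{n,m}^\rho > c_{1-\alpha}\bigr)\ \ge\ \P\bigl(T_{n,m}^\rho > \theta/2\bigr) - o(1),$$
and the right-hand side tends to $1$ by the consistency of $T_{n,m}^\rho$ just discussed. Hence the power of the level-$\alpha$ permutation test converges to $1$.

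The main obstacle is therefore not the final three-line argument but the intermediate consistency step. Because of the squared brackets, $T_{n,m}^\rho$ is not itself a single U-statistic, so one has to verify that the cross terms arising in the expansion carry no asymptotic bias and that the various leave-out averages are asymptotically equivalent to ordinary empirical averages. Since every kernel is an indicator, all moments are finite and variances of the constituent U-statistics are $O(1/n)+O(1/m)$; consequently this step is really a careful bookkeeping exercise rather than a genuine analytic difficulty, and is exactly what Lemmas A.1--A.2 in the appendix are designed to handle.
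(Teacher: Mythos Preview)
Your proposal is correct and follows essentially the same approach as the paper: establish $T_{n,m}^\rho \xrightarrow{P}\Theta_\rho^2(F,G)$ (the paper does this via the explicit mean computation of Lemma~A.1 and the variance bound of Lemma~A.2, whereas you sketch an equivalent U-statistic SLLN argument), then combine with the deterministic cut-off bound from Lemma~\ref{permutation-cut-off} to conclude that the power tends to one. The final three-line argument you give matches the paper's proof almost verbatim.
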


Note that if $(R^d,\rho)$ is a finite dimensional separable metric space, $\Theta^2_{\rho}(F,G)=0$ if and if only $F=G$. 
So, under $H_1:F \neq G$, we have 
$\Theta^2_{\rho}(F,G)>0$. 
For computing $c_{1-\alpha}$, instead of considering all $N!$ permutations of $\{1,2,\ldots,N\}$, it is enough to consider all possible subsets of ${\mathcal U}$ of size $n$. But, if $n$ and $m$ are moderately large, it may not computationally feasible to consider all $\binom{N}{n}$ subsets or all $N!$ permutations. In such cases, we  generate $B$ random permutations $\pi_1,\pi_2,\ldots,\pi_B$ and reject $H_0$ if  the p-value of the test
$$p_{n,m,B} = \frac{1}{B+1}\bigg\{\sum_{i=1}^B\mathbbm{1}[T_{n,m,\pi_i}^\rho\geq T_{n,m}^\rho]+1\bigg\}$$
is smaller than the significance level $\alpha$. Notice that the use of all $N!$ permutations leads to the p-value
$$p_{n,m} = \frac{1}{N!}\bigg\{\sum_{\pi\in \mathcal{S}_N}\mathbbm{1}[T_{n,m,\pi_i}^\rho\geq T_{n,m}^\rho]\bigg\}.$$
As $B$ increases, the difference between $p_{n,m, B}$ and $p_{n,m}$ converges to $0$ almost surely
(see Lemma~\ref{randper}). 
This justifies the implementation of the test based on random permutations. 

\begin{lemma}
\label{randper}
Given the pooled sample ${\cal U}$, $|p_{n,m, B} - p_{n,m}| \stackrel{a.s.}{\rightarrow}0$  as $B$ grows to infinity.
\end{lemma}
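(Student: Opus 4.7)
The plan is to condition on the pooled sample $\mathcal{U}$ and exploit the fact that, once $\mathcal{U}$ is fixed, both $T_{n,m}^\rho$ and the full-enumeration p-value $p_{n,m}$ become deterministic quantities, while the randomness in $p_{n,m,B}$ comes entirely from the i.i.d.\ uniform permutations $\pi_1,\ldots,\pi_B$ drawn from $\mathcal{S}_N$.

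The first step is to define, for each $i=1,\ldots,B$, the indicator $Z_i := \mathbbm{1}[T_{n,m,\pi_i}^\rho \ge T_{n,m}^\rho]$. Since $\pi_1,\ldots,\pi_B$ are i.i.d.\ uniform on $\mathcal{S}_N$, the variables $Z_1,\ldots,Z_B$ are i.i.d.\ Bernoulli, and their common mean, conditional on $\mathcal{U}$, is exactly
$$\E[Z_1\mid \mathcal{U}] \;=\; \frac{1}{N!}\sum_{\pi\in\mathcal{S}_N}\mathbbm{1}[T_{n,m,\pi}^\rho \ge T_{n,m}^\rho] \;=\; p_{n,m}.$$
By the strong law of large numbers applied to the bounded i.i.d.\ sequence $\{Z_i\}$, we have $B^{-1}\sum_{i=1}^B Z_i \to p_{n,m}$ almost surely as $B\to\infty$.

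The second step is to reconcile the $(B+1)^{-1}$ normalization with the $B^{-1}$ scaling. Writing
$$p_{n,m,B} \;=\; \frac{B}{B+1}\cdot\frac{1}{B}\sum_{i=1}^B Z_i \;+\; \frac{1}{B+1},$$
and using that $B/(B+1)\to 1$ and $1/(B+1)\to 0$, the conclusion $p_{n,m,B} \to p_{n,m}$ a.s.\ follows, which is equivalent to $|p_{n,m,B} - p_{n,m}| \stackrel{a.s.}{\to} 0$ conditionally on $\mathcal{U}$.

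There is no serious obstacle here: the argument is essentially the strong law of large numbers for conditionally i.i.d.\ Bernoulli variables, and the only bookkeeping is the trivial reconciliation of the two different normalizing constants. The one point worth flagging is the implicit assumption that the Monte Carlo permutations $\pi_1,\ldots,\pi_B$ are drawn independently and uniformly from $\mathcal{S}_N$ (and independently of $\mathcal{U}$), which is the standard sampling scheme for permutation tests and is what makes the $Z_i$'s genuinely i.i.d.\ with the required mean.
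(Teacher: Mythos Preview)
Your proof is correct and, in fact, more direct than the paper's. The paper defines the full and Monte Carlo permutation distribution functions
\[
H(t)=\frac{1}{N!}\sum_{\pi\in\mathcal{S}_N}\mathbbm{1}[T_{n,m,\pi}^\rho\le t],\qquad
H_B(t)=\frac{1}{B}\sum_{i=1}^B\mathbbm{1}[T_{n,m,\pi_i}^\rho\le t],
\]
bounds $|p_{n,m}-p_{n,m,B}|$ by $\sup_{t}|H(t)-H_B(t)|+1/(B+1)$, and then invokes the Dvoretzky--Kiefer--Wolfowitz inequality to conclude almost sure convergence via an exponential tail bound. This is a \emph{uniform} argument over all thresholds $t$, which is stronger than necessary: the lemma only requires control at the single (data-dependent but fixed once $\mathcal{U}$ is fixed) point $t=T_{n,m}^\rho$. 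Your approach exploits precisely this: conditionally on $\mathcal{U}$, the indicators $Z_i$ are i.i.d.\ Bernoulli with mean $p_{n,m}$, so the pointwise SLLN suffices. The paper's route buys an explicit finite-sample exponential rate (useful if one wanted quantitative guarantees on how large $B$ should be), while yours is more elementary and exactly tailored to the stated conclusion.
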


Though \cite{pan2018ball} also suggested implementing their test using the permutation method, they proved the consistency of their test based on the large sample distribution of the test statistic. The large sample consistency of the permutation test was missing. Moreover, they did not investigate the high-dimensional behavior of their test. 
Recently \cite{kim2022minimax} provided a unified framework for analyzing the minimax rate optimality of the permutation test. Their emphasis was on tests based on U-statistic in different scenarios. Though $T_{n,m}^{\rho}$ is not a U-statistic, 
it can be shown that the permutation test based on $T_{n,m}^{\rho}$ is also minimax rate optimal over a certain class of alternatives. We discuss this in Section \ref{High Dimensional Performance under Shrinking Alternatives}, where we investigate the asymptotic behavior of our test when the sample sizes increase with the dimension. 
Before that, in the next section, we study its behavior in the HDLSS asymptotic regime. 

\section{Behavior of the proposed test in the HDLSS setup}
\label{High Dimensional Performance}
In this section, we study the high dimensional behavior of the test when the dimension of the data $d$ grows to infinity while the sample sizes $n$ and $m$ remain fixed. The behavior of the test may depend on the metric $\rho$. Since the $\ell_2$ distance (i.e., the Euclidean metric) is arguably the most popular choice as 
the distance function on ${\mathbb R}^d$, we first consider the test based on the $\ell_2$ distance. 

\subsection{Test based on the $\ell_2$ distance}
\label{HDLSS Asymptotics}
For ${\bf X}_1,{\bf X}_2,\ldots,{\bf X}_n \stackrel{iid}{\sim}F$ and ${\bf Y}_1,{\bf Y}_2,\ldots,{\bf Y}_m \stackrel{iid}{\sim}G$, the test statistic based on the $\ell_2$ distance can be expressed as
\begingroup
\addtolength{\jot}{-1em}
\begin{align*}
T_{n,m}^{\ell_2}= &\frac{1}{n(n-1)}\sum_{1 \le i \neq j \le n}  \hspace{-0.1in}\Big(\frac{1}{n-2}\hspace{-0.05in}\sum_{k (\neq i,j)=1}^{n} \hspace{-0.1in}{\mathbbm{1} }\{\|{\bf X}_k-{\bf X}_i\|\le \|{\bf X}_j-{\bf X}_i\|\} \\ 
& \hspace{2in}-\frac{1}{m}\sum_{\ell=1}^{m} {\mathbbm{1} }\{\|{\bf Y}_{\ell}-{\bf X}_i\|\le \|{\bf X}_j-{\bf X}_i\|\}\Big)^2\\ 
&+\frac{1}{m(m-1)}\sum_{1 \le i \neq j \le m}  \Big(\frac{1}{n}\sum_{k=1}^{n} {\mathbbm{1}}\{\|{\bf X}_k-{\bf Y}_i\|\le \|{\bf Y}_j-{\bf Y}_i\|\}\\
& \hspace{2in}-\frac{1}{m-2}\hspace{-0.05in}\sum_{\ell(\neq i,j)=1}^{m} \hspace{-0.1in} {\mathbbm{1}}\{\|{\bf Y}_{\ell}-{\bf Y}_i\|\le \|{\bf Y}_j-{\bf Y}_i\|\}\Big)^2,
\end{align*}
\endgroup
where  $\|\cdot\|$ denotes the Euclidean norm on ${\mathbb R}^d$. For investigating the high dimensional behavior of the test based on $T_{n,m}^{\ell_2}$, here we consider the following assumptions.
\begin{itemize}
    \item[(A1)] For ${\bf Z} = (Z^{(1)},\ldots,Z^{(d)}) \sim F$ or $G$, fourth moments of $Z^{(q)}$s are
    uniformly bounded.
    \vspace{0.05in}
    \item[(A2)] If ${\bf X}_1,{\bf X}_2 \stackrel{iid}{\sim} F$ and ${\bf Y}_1,{\bf Y}_2 \stackrel{iid}{\sim} G$ are independent, for ${\bf W} = {\bf X}_1-{\bf X}_2, {\bf X}_1-{\bf Y}_1$ and ${\bf Y}_1-{\bf Y}_2$, the sum
    $\sum_{1\leq q\not= q'\leq d}Cov\{(W^{(q)})^2, (W^{(q')})^2\}$ is of order $o(d^2)$.
    \vspace{0.05in}
    \item[(A3)] There exist constants $\nu^2$, $\sigma_F^2$ and $\sigma_G^2$ such that for two independent random vectors ${\bf X}\sim F$ and ${\bf Y}\sim G$, $d^{-1}\|E({\bf X})-E({\bf Y})\|^2$, $d^{-1}\sum_{q=1}^d Var(X^{(q)})$ and $d^{-1}\sum_{q=1}^d Var(Y^{(q)})$ converge to $\nu^2$, $\sigma_F^2$ and $\sigma_G^2$, respectively, as $d$ increases to infinity.
\end{itemize}
Assumptions (A1)-(A3) are quite common in the HDLSS literature. \cite{hall2005geometric} considered similar assumptions to investigate the high dimensional behavior of some popular classifiers. Similar assumptions were also made by \cite{ahn2010maximal}, \cite{biswas2014distribution} and \cite{sarkar2019perfect} in the context of high dimensional classification, hypothesis testing, and cluster analysis, respectively.  
Assumptions (A1) and (A2) ensure that the weak law of large numbers (WLLN) holds for the sequence $\{({\bf W}^{(q)})^2:~q\ge1\}$, i.e., $d^{-1}|\|{\bf W}\|^2-E\|{\bf W}\|^2| \stackrel{P}{\longrightarrow} 0$  as $d \rightarrow \infty$. Again, depending on whether ${\bf W}={\bf X}_1-{\bf X}_2, {\bf Y}_1-{\bf Y}_2$ or ${\bf X}_1-{\bf Y}_1$ the limiting value of $d^{-1}E\|{\bf W}\|^2$ and hence that of $d^{-1}\|{\bf W}\|^2$ can be obtained under Assumption (A3). As a result, we have the  following lemma.

\begin{lemma}
Suppose that ${\bf X}_1,{\bf X}_2\stackrel{iid}{\sim} F$ and ${\bf Y}_1,{\bf Y}_2\stackrel{iid}{\sim} G$ are independent. If $F$ and $G$ satisfy Assumptions (A1)-(A3), then as $d$ grows to infinity, $d^{-1/2}\|{\bf X}_1-{\bf X}_2\| \stackrel{P}{\rightarrow} \sigma_F\sqrt{2}$, $d^{-1/2}\|{\bf Y}_1-{\bf Y}_2\| \stackrel{P}{\rightarrow} \sigma_G\sqrt{2}$ and
$d^{-1/2}\|{\bf X}_1-{\bf Y}_1\| \stackrel{P}{\rightarrow}\sqrt{\sigma_G^2+\sigma_F^2+\nu^2}$.
\label{inter-point-dist-accu}
\end{lemma}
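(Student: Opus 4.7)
The plan is to prove the three convergence statements in a unified way by showing that, for each choice $\mathbf{W}\in\{\mathbf{X}_1-\mathbf{X}_2,\mathbf{Y}_1-\mathbf{Y}_2,\mathbf{X}_1-\mathbf{Y}_1\}$, the squared norm $d^{-1}\|\mathbf{W}\|^2$ converges in probability to the appropriate limit, and then to invoke the continuous mapping theorem with the square root to get the stated conclusion for $d^{-1/2}\|\mathbf{W}\|$.

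First I would expand $\|\mathbf{W}\|^2=\sum_{q=1}^d (W^{(q)})^2$ and compute its mean coordinatewise. For $\mathbf{W}=\mathbf{X}_1-\mathbf{X}_2$ one has $E(W^{(q)})^2 = 2\,\mathrm{Var}(X^{(q)})$, so by Assumption (A3), $d^{-1}E\|\mathbf{W}\|^2\to 2\sigma_F^2$; similarly $d^{-1}E\|\mathbf{Y}_1-\mathbf{Y}_2\|^2\to 2\sigma_G^2$; and for $\mathbf{W}=\mathbf{X}_1-\mathbf{Y}_1$, $E(W^{(q)})^2=\mathrm{Var}(X^{(q)})+\mathrm{Var}(Y^{(q)})+(EX^{(q)}-EY^{(q)})^2$, so by (A3), $d^{-1}E\|\mathbf{W}\|^2\to \sigma_F^2+\sigma_G^2+\nu^2$. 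This handles the target constants.

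Next I would control the variance of $d^{-1}\|\mathbf{W}\|^2$ via a Chebyshev/WLLN argument. Writing
\[
\mathrm{Var}\!\left(d^{-1}\|\mathbf{W}\|^2\right)=d^{-2}\sum_{q=1}^d \mathrm{Var}\!\left((W^{(q)})^2\right)+d^{-2}\!\!\sum_{1\le q\neq q'\le d}\mathrm{Cov}\!\left((W^{(q)})^2,(W^{(q')})^2\right),
\]
Assumption (A1) (uniformly bounded fourth moments of the coordinates of $\mathbf{X},\mathbf{Y}$, hence of $\mathbf{W}$) ensures that $\mathrm{Var}((W^{(q)})^2)\le E(W^{(q)})^4$ is uniformly bounded, so the first sum is $O(d)$ and its $d^{-2}$-scaled version vanishes. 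Assumption (A2) directly states that the second sum is $o(d^2)$, so its $d^{-2}$-scaled version also vanishes. Combined with the mean computation, Chebyshev's inequality yields $d^{-1}\|\mathbf{W}\|^2\stackrel{P}{\to}\tau^2$ for the appropriate $\tau^2\in\{2\sigma_F^2,2\sigma_G^2,\sigma_F^2+\sigma_G^2+\nu^2\}$.

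Finally I would apply the continuous mapping theorem to the map $t\mapsto\sqrt{t}$ (continuous on $[0,\infty)$), which is legitimate since $d^{-1}\|\mathbf{W}\|^2\ge 0$ and its limit is nonnegative; this gives $d^{-1/2}\|\mathbf{W}\|\stackrel{P}{\to}\tau$, yielding the three stated convergences. I do not expect a serious obstacle: the only subtlety is that the coordinates $W^{(q)}$ are not assumed independent, so one cannot use the classical WLLN for i.i.d.\ sums; this is exactly why (A2) is imposed, and it is used precisely at the covariance step above. Everything else reduces to moment bookkeeping under (A1) and (A3).
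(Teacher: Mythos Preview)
Your proposal is correct and follows essentially the same approach as the paper: compute $E[d^{-1}\|\mathbf{W}\|^2]$ coordinatewise using (A3), bound $\mathrm{Var}(d^{-1}\|\mathbf{W}\|^2)$ by splitting into diagonal terms (handled by (A1)) and off-diagonal covariances (handled by (A2)), apply Chebyshev, and then pass to the square root. The only cosmetic difference is that you explicitly invoke the continuous mapping theorem for $t\mapsto\sqrt{t}$, whereas the paper states the final convergences directly.
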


Lemma~\ref{inter-point-dist-accu} helps us to study the high dimensional behavior of the indicators in $T_{n,m}^{\ell_2}$ and that of the test statistic $T_{n,m}^{\ell_2}$ as a whole. 
Lemma~\ref{more-than-1/3} uses these results to show that if $\nu^2>0$ or $\sigma^2_F\neq \sigma^2_G$,  $T_{n,m}^{\ell_2}$ takes values higher than $1/3$ with probability tending to $1$ as the dimension grows to infinity. 

\begin{lemma}
Assume that the two distributions $F$ and $G$ satisfy Assumptions (A1)-(A3). If $\nu^2+(\sigma_F-\sigma_G)^2 > 0$, we have $\lim_{d\to\infty}P\{T_{n,m}^{\ell_2}> 1/3\} = 1$.
\label{more-than-1/3}
\end{lemma}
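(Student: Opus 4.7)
The plan is to reduce the problem to an exact rank-permutation identity via Lemma~\ref{inter-point-dist-accu}. Write $a := \sigma_F\sqrt{2}$, $b := \sigma_G\sqrt{2}$, and $c := \sqrt{\sigma_F^2 + \sigma_G^2 + \nu^2}$ for the three limiting inter-point distance scales supplied by that lemma. A one-line algebraic check (adding $c^2 = a^2$ and $c^2 = b^2$ and simplifying) shows that $c = a$ and $c = b$ can hold simultaneously only if $\nu = 0$ and $\sigma_F = \sigma_G$, which is exactly what the hypothesis $\nu^2 + (\sigma_F - \sigma_G)^2 > 0$ forbids. Hence at least one of the two halves of $T_{n,m}^{\ell_2}$ --- the $\mathcal{X}$-indexed first half (when $c \neq a$) or the $\mathcal{Y}$-indexed second half (when $c \neq b$) --- lies in a ``clean'' regime where the cross-sample distance scale $c\sqrt{d}$ separates by order $\sqrt{d}$ from the relevant within-sample radius inside the indicator. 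I would show that this favoured half alone converges in probability to a constant strictly above $1/3$, and conclude by non-negativity of the other half.

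Focus on the first half under $c \neq a$. Lemma~\ref{inter-point-dist-accu}, applied to each fixed triple $(i,j,\ell)$, gives
$$\mathbbm{1}\{\|{\bf Y}_\ell - {\bf X}_i\| \leq \|{\bf X}_j - {\bf X}_i\|\} \stackrel{P}{\to} \kappa := \mathbbm{1}\{c < a\} \in \{0,1\},$$
so the $\mathcal{Y}$-average $A_{ij} := m^{-1}\sum_\ell \mathbbm{1}\{\|{\bf Y}_\ell - {\bf X}_i\| \leq \|{\bf X}_j - {\bf X}_i\|\}$ also tends to $\kappa$ in probability as $d \to \infty$. Using $|A_{ij} - \kappa| \stackrel{P}{\to} 0$ and the uniform bound $|S_{ij}|, |A_{ij}| \leq 1$, the first half of $T_{n,m}^{\ell_2}$ has the same probability limit as
$$
\frac{1}{n(n-1)}\sum_{i \neq j}(S_{ij} - \kappa)^2, \qquad S_{ij} := \frac{1}{n-2}\sum_{k \neq i,j}\mathbbm{1}\{\|{\bf X}_k - {\bf X}_i\| \leq \|{\bf X}_j - {\bf X}_i\|\}.
$$
The decisive step is a purely combinatorial identity: when the $n-1$ distances $\{\|{\bf X}_j - {\bf X}_i\|: j \neq i\}$ are pairwise distinct (an event of probability one under any continuity of $F$), for each fixed $i$ the rank $R_{ij}$ of $\|{\bf X}_j - {\bf X}_i\|$ takes every value in $\{1,\dots,n-1\}$ exactly once as $j$ varies, and $S_{ij} = (R_{ij}-1)/(n-2)$. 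The standard formula $\sum_{r=0}^{n-2} r^2 = (n-2)(n-1)(2n-3)/6$ then yields, exactly for every $d$,
$$
\frac{1}{n(n-1)}\sum_{i \neq j} S_{ij}^2 = \frac{2n-3}{6(n-2)} = \frac{1}{3} + \frac{1}{6(n-2)},
$$
and the rank-reversal symmetry $R_{ij} \mapsto n - R_{ij}$ shows that $\sum_{i \neq j}(1-S_{ij})^2$ gives the same value. Therefore, irrespective of whether $\kappa = 0$ or $\kappa = 1$, the first half converges in probability to $\frac{1}{3} + \frac{1}{6(n-2)} > 1/3$, which together with non-negativity of the second half gives $P\{T_{n,m}^{\ell_2} > 1/3\} \to 1$. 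The entirely symmetric argument applies for the second half when $c \neq b$, with $n$ replaced by $m$.

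The main obstacle --- and the reason the condition $\nu^2 + (\sigma_F - \sigma_G)^2 > 0$ is essentially sharp for this $\ell_2$-based test --- is the forbidden configuration $c = a = b$: both quantities inside every relevant cross-sample indicator then concentrate at the same scale, the indicators admit no deterministic probability limits, and a genuinely second-order analysis would be required. This is exactly the ``beyond the second moment'' obstruction that motivates the replacement of the Euclidean metric by a more discriminating distance in the next subsection. A minor side point is that the rank identity uses distinctness of the inter-point distances; ties have probability zero under any continuous $F$ and so contribute nothing to the probability limit.
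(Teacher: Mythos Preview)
Your proof is correct and takes essentially the same approach as the paper: the paper packages the indicator-convergence-plus-rank-identity argument into an auxiliary Lemma~A.4 (stated for a general metric so it can be reused for Theorem~\ref{HDLSS-gdist}), derives the same value $\tfrac{1}{3}+\tfrac{1}{6(n-2)}$ via a binomial-coefficient count equivalent to your rank formula, and invokes the condition $\theta_3>\min\{\theta_1,\theta_2\}$ (checked via $\theta_3^2-\theta_1\theta_2=\nu^2+(\sigma_F-\sigma_G)^2>0$) in place of your ``$c\neq a$ or $c\neq b$''. Your use of the rank-reversal symmetry $R_{ij}\mapsto n-R_{ij}$ to treat both values of $\kappa$ at once is a minor streamlining of the paper's separate case-by-case computations.
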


In Lemma~\ref{permutation-cut-off}, we have already seen that the critical value of the permutation test $c_{1-\alpha}$ is smaller than $\frac{2}{3\alpha} \big(\frac{1}{\min\{n,m\}-2}\big)$ with probability one. So, the resulting test has the high-dimensional consistency if $\min\{n,m\} -2 > 2/\alpha$. This result is  stated as Theorem~\ref{HDLSS-L2} below.

\begin{thm}
Assume that $F$ and $G$ satisfy Assumptions (A1)-(A3). If $\nu^2+(\sigma_F-\sigma_G)^2> 0$ and $\min\{n, m\} \geq 2+2/\alpha$, the power of the level $\alpha$  $(0<\alpha<1)$ test based on $T_{n,m}^{\ell_2}$ increases to one as $d$ increases to infinity.
\label{HDLSS-L2}
\end{thm}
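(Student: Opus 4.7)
The plan is to combine the probabilistic upper bound on the permutation cut-off from Lemma~\ref{permutation-cut-off} with the lower bound on the test statistic from Lemma~\ref{more-than-1/3}. The level-$\alpha$ test rejects $H_0$ whenever $T_{n,m}^{\ell_2} > c_{1-\alpha}$, so the power is at least
\[
P\bigl(T_{n,m}^{\ell_2} > c_{1-\alpha}\bigr) \;\geq\; P\bigl(T_{n,m}^{\ell_2} > t^{*}\bigr)
\]
whenever $c_{1-\alpha} \leq t^{*}$ with probability one. The strategy is therefore to choose $t^{*} = 1/3$ and verify both halves of this inequality under the stated hypotheses.

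First I would observe that, by Lemma~\ref{permutation-cut-off}, with probability one
\[
c_{1-\alpha} \;\leq\; \frac{2}{3\alpha(\min\{n,m\}-2)}.
\]
The sample-size condition $\min\{n,m\} \geq 2 + 2/\alpha$ is equivalent to $\min\{n,m\} - 2 \geq 2/\alpha$, which plugged into the bound yields $c_{1-\alpha} \leq \tfrac{1}{3}$ almost surely. This is the step that explains the otherwise mysterious numerical requirement in the hypothesis: it is precisely tuned so that the permutation cut-off falls below the threshold $1/3$ at which the statistic concentrates.

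Second, under the moment and variance assumptions (A1)--(A3) together with $\nu^2 + (\sigma_F - \sigma_G)^2 > 0$, Lemma~\ref{more-than-1/3} gives $\lim_{d\to\infty} P\{T_{n,m}^{\ell_2} > 1/3\} = 1$. Combining the two bounds,
\[
P\bigl(T_{n,m}^{\ell_2} > c_{1-\alpha}\bigr) \;\geq\; P\bigl(T_{n,m}^{\ell_2} > \tfrac{1}{3}\bigr) \;\longrightarrow\; 1 \quad \text{as } d \to \infty,
\]
which is exactly the assertion of Theorem~\ref{HDLSS-L2}.

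There is essentially no obstacle here beyond correctly bookkeeping the inequality; all of the work has already been done in the two preceding lemmas. The only care needed is to state the implication ``$c_{1-\alpha} \leq 1/3$ with probability one'' and then note that, conditional on this almost-sure event, the rejection probability dominates $P(T_{n,m}^{\ell_2} > 1/3)$, whose limit is one by Lemma~\ref{more-than-1/3}.
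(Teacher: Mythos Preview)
Your proof is correct and follows essentially the same route as the paper: bound the permutation cut-off by $1/3$ via Lemma~\ref{permutation-cut-off} and the sample-size condition, then invoke Lemma~\ref{more-than-1/3} to show the statistic eventually exceeds $1/3$. The paper's version is terser but the logic is identical.
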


This theorem gives a sufficient condition for the high dimensional consistency of the proposed test.
If two distributions $F$ and $G$ differ in their locations ($\nu^2>0$) and/or scales ($\sigma_F^2\not=\sigma_G^2$), the test based on $T_{n,m}^{\ell_2}$ turns out to be consistent in the HDLSS regime.

Now, we consider three examples involving multivariate normal distributions to study the empirical performance of the proposed test for high-dimensional data.  

\vspace{0.075in}
\textbf{Example 1}  Two distributions $F = \mathcal{N}_{d}({\bf 0}_d,{\bf I}_d)$ and $G = \mathcal{N}_d(0.15\,{\bf 1}_d, {\bf I}_d)$ have different means but the same dispersion matrix. Here ${\bf 0}_d=(0,0,\ldots,0)^{\top}$ and ${\bf 1}_d=(1,1,\ldots,1)^{\top}$ are $d$-dimensional vectors with all elements equal to $0$ and $1$, respectively, ${\bf I}_d$ denotes the $d \times d$ identity matrix, and ${\mathcal N}_d(\muvec,\sigmat)$ stands for the $d$-variate normal distribution with the mean vector $\muvec$ and the dispersion matrix $\sigmat$.

\vspace{0.075in}
\textbf{Example 2}  Two distributions $F = \mathcal{N}_{d}({\bf 0}_d,{\bf I}_d)$  and $G = \mathcal{N}_d({\bf 0}_d,1.1 {\bf I}_d)$ have the same location but they differ in their scales. 

\vspace{0.075in}
\textbf{Example 3}  Both $F = \mathcal{N}_{d}({\bf 0}_d,\sigmat_{1,d})$ and $G = \mathcal{N}_d({\bf 0}_d,\sigmat_{2,d})$ have the same mean  and diagonal dispersion matrices. The first $d/2$ diagonal elements of $\sigmat_{1,d}$ are $1$ and the rest are $2$. On the contrary, $\sigmat_{2,d}$ has the first $d/2$ diagonal elements equal to $2$ and the rest equal to $1$.       

\vspace{0.075in}
For each example, we considered $10$ different choices of $d$ ($d=2^i$ for $i=1,\ldots,10$). In each case, we used the test based on $100$ observations  ($50$ from each distribution). This process was repeated 500 times to estimate the power of the test by the proportion of times it rejected $H_0$. The results are reported in Figure~\ref{fig:HDLSS-l2-illustration}.

\begin{figure}[!h]
    \centering
    \begin{tikzpicture}
    \begin{axis}[xmin = 1, xmax = 10, xlabel = {$\log_2(d)$}, ylabel = {Power Estimates}, title = {\bf Example 1}]

    \addplot[color = red, mark = *, step = 1cm,very thin]coordinates{(1,0.098)(2,0.09)(3,0.088)(4,0.116)(5,0.128)(6,0.154)(7,0.202)(8,0.356)(9,0.696)(10,0.99)};

    \end{axis}
    \end{tikzpicture}
    \begin{tikzpicture}
    \begin{axis}[xmin = 1, xmax = 10, xlabel = {$\log_2(d)$}, ylabel = {Power Estimates}, title = {\bf Example 2}]

    \addplot[color = red,   mark = *, step = 1cm,very thin]coordinates{(1,0.062)(2,0.092)(3,0.122)(4,0.23)(5,0.436)(6,0.736)(7,0.96)(8,1)(9,1)(10,1)};

    \end{axis}
    \end{tikzpicture}
    \begin{tikzpicture}
     \begin{axis}[xmin = 1, xmax = 10, ymin = 0, ymax = 1, xlabel = {$\log_2(d)$}, ylabel = {Power Estimates}, title = {\bf Example 3}]

     \addplot[color = red,   mark = *, step = 1cm,very thin]coordinates{(1,0.074)(2,0.072)(3,0.034)(4,0.042)(5,0.034)(6,0.042)(7,0.044)(8,0.068)(9,0.04)(10,0.06)};

    \end{axis}
    \end{tikzpicture}

    \caption{Power of the permutation test based on $T_{n,m}^{\ell_2}$ in Examples 1-3.}
    \label{fig:HDLSS-l2-illustration}
\end{figure}
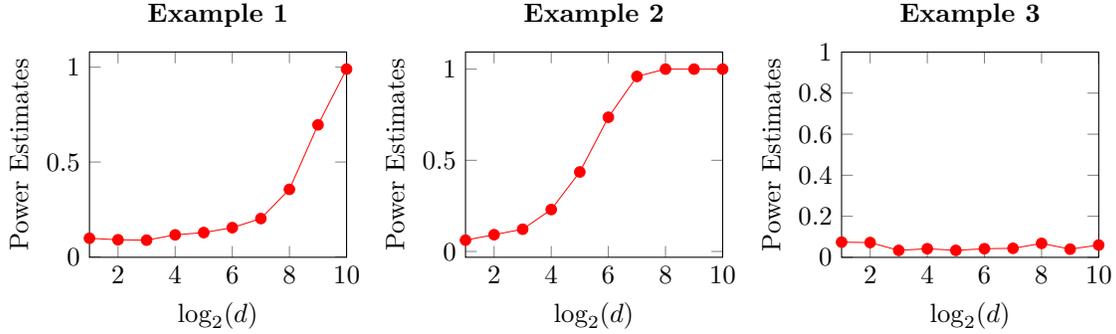

In Examples 1 and 2, we have $\nu^2+(\sigma_F-\sigma_G)^2>0$. So, as one would expect in view of Theorem~\ref{HDLSS-L2}, in these two examples, the power of the proposed test increased with the dimension. However,
in Example 3, we have $\nu^2=0$ and $\sigma_F=\sigma_G$. So, the sufficient conditions for consistency (see Theorem \ref{HDLSS-L2}) do not hold. In this example, the proposed test had a poor performance. To overcome this limitation of the test based on the $\ell_2$ distance, in the next subsection, we use different distance functions to construct the test statistic and study the high dimensional performance of the resulting tests.

\subsection{Tests based on generalized distances}
\label{HDLSS Asymptotics Under Generalized Distances}

Instead of using the $\ell_2$ distance, here we consider the generalized distance function proposed by \cite{sarkar2018some}. The generalized distance between two $d$-dimensional observations ${\bf x}=(x^{(1)},\ldots,x^{(d)})$ and ${\bf y}=(y^{(1)},\ldots,y^{(d)})$ is given by 
$\varphi_{h,\psi}({\bf x}, {\bf y}) = h\big\{\frac{1}{d}\sum_{q = 1}^d \psi(|x^{(q)}-y^{(q)}|^2)\big\},$
where $h:\R_+\to\R_+$ and $\psi:\R_+\to\R_+$ are continuous, monotonically increasing functions with $h(0) = \psi(0) = 0$. Note that all $\ell_p$ distances ($p \ge 1$) are special cases  of $\varphi_{h,\psi}$ (up to a multiplicative constant). We replace the $\ell_2$ metric by $\varphi_{h,\psi}$ to get the 
generalized test statistic 

\begingroup
\addtolength{\jot}{-1em}
{ \begin{align*}
T^{h,\psi}_{n,m}&=\frac{1}{n(n-1)}\sum_{1 \le i \neq j \le n}  \hspace{-0.1in}\Big(\frac{1}{n-2}\hspace{-0.05in}\sum_{k (\neq i,j)=1}^{n} \hspace{-0.1in}{\mathbbm{1} }\{\varphi_{h,\phi}({\bf X}_k,{\bf X}_i)\le \varphi_{h,\phi}({\bf X}_j,{\bf X}_i)\}\\
& \hspace{2in}-\frac{1}{m}\sum_{\ell=1}^{m} {\mathbbm{1} }\{\varphi_{h,\phi}({\bf Y}_{\ell},{\bf X}_i)\le \varphi_{h,\phi}({\bf X}_j,{\bf X}_i)\}\Big)^2\\
&+\frac{1}{m(m-1)}\sum_{1 \le i \neq j \le m}  \Big(\frac{1}{n}\sum_{k=1}^{n} {\mathbbm{1}}\{\varphi_{h,\phi}({\bf X}_k,{\bf Y}_i)\le \varphi_{h,\phi}({\bf Y}_j,{\bf Y}_i)\}\\
& \hspace{2in}-\frac{1}{m-2}\hspace{-0.05in}\sum_{\ell(\neq i,j)=1}^{m} \hspace{-0.1in} {\mathbbm{1}}\{\varphi_{h,\phi}({\bf Y}_{\ell},{\bf Y}_i)\le \varphi_{h,\phi}({\bf Y}_j,{\bf Y}_i)\}\Big)^2.
\end{align*}}
\endgroup

\noindent
We reject the null hypothesis $H_0:F=G$  for large values of $T^{h,\psi}_{n,m}$, where the cut-off is chosen using the  permutation method as before. 

High dimensional behavior of the test based on $T_{n,m}^{h,\psi}$ can be investigated under assumptions similar to (A1)-(A3). Recall that (A1) and (A2) were needed to have WLLN for the sequence of possibly dependent and non-identically distributed random variables $\{(W^{(q)})^2:~q\ge 1\}$ for ${\bf W}= {\bf X}_1-{\bf X}_2, {\bf X}_1-{\bf Y}_1$ and ${\bf Y}_1-{\bf Y}_2$, while (A3) gave us information about the limiting value of $d^{-1}\sum_{q=1}^d E((W^{(q)})^2)$.
The following assumption takes care of these issues for the sequence of random variables $\{\psi(W^{(q)})^2:~q\ge 1\}$.

\begin{itemize}
    \item[(A4)] If ${\bf X}_1,{\bf X}_2 \stackrel{iid}{\sim} F$ and ${\bf Y}_1,{\bf Y}_2\stackrel{iid}{\sim} G$ are independent, for $W = {\bf X}_1-{\bf X}_2$, ${\bf X}_1-{\bf Y}_1$ and ${\bf Y}_1-{\bf Y}_2$, ($i$) $\limsup_{d\to\infty}d^{-1}\sum_{q=1}^d E\psi(|W^{(q)}|^2)<\infty$ and ($ii$) $d^{-1}\sum_{q=1}^d \{\psi(|W^{(q)}|^2)-E\psi(|W^{(q)}|^2)\}$ converges to zero in probability as $d$ grows to infinity.  
\end{itemize}

Now, let us define $\varphi_{h,\psi}^*(F,F) = h\{d^{-1}\sum_{q=1}^d  E\psi(|X_1^{(q)}-X_2^{(q)}|^2)\}$, $\varphi_{h,\psi}^*(G,G) = h\{d^{-1}\sum_{q=1}^d E\psi(|Y_1^{(q)}-Y_2^{(q)}|^2)\}$ and $\varphi_{h,\psi}^*(F,G) = h\{d^{-1}\sum_{q=1}^d E\psi(| X_1^{(q)}-Y_1^{(q)}|^2)\}$. There is an interesting lemma due to \cite{sarkar2018some} (Lemma 1) involving these three quantities. The lemma is stated below.

\begin{lemma}
Suppose that $h$ is concave and $\psi$ has a non-constant completely monotone derivative. Then for any fixed $d$, we have $e_{h,\psi}(F,G) = 2\varphi_{h,\psi}^*(F,G)-\varphi_{h,\psi}^*(F,F)-\varphi_{h,\psi}^*(G,G)\geq 0,$ where the equality holds if and only if $F$ and $G$ have the same one-dimensional marginal distributions.
\label{part-wise-gdist}
\end{lemma}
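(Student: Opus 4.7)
The plan is to reduce the inequality to a one-dimensional statement applied coordinate-by-coordinate, and then assemble the result using the monotonicity and concavity of $h$. Write $\bar A_{FG}:=d^{-1}\sum_{q=1}^{d}E\psi(|X_1^{(q)}-Y_1^{(q)}|^2)$, and define $\bar A_{FF}$ and $\bar A_{GG}$ analogously, so that $\varphi_{h,\psi}^{*}(F,G)=h(\bar A_{FG})$, $\varphi_{h,\psi}^{*}(F,F)=h(\bar A_{FF})$ and $\varphi_{h,\psi}^{*}(G,G)=h(\bar A_{GG})$.

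The key ingredient I would establish is a per-coordinate energy inequality: for each $q\in\{1,\dots,d\}$,
$$e_q\ :=\ 2E\psi(|X_1^{(q)}-Y_1^{(q)}|^{2})-E\psi(|X_1^{(q)}-X_2^{(q)}|^{2})-E\psi(|Y_1^{(q)}-Y_2^{(q)}|^{2})\ \ge\ 0,$$
with equality iff the $q$-th marginals of $F$ and $G$ agree. This is where the completely-monotone-derivative hypothesis enters. Since $\psi(0)=0$ and $\psi'$ is completely monotone, Bernstein's theorem yields the L\'evy-type representation
$$\psi(t)\ =\ \int_{0}^{\infty}\frac{1-e^{-ts}}{s}\,d\mu(s),\qquad t\ge 0,$$
for a non-negative measure $\mu$ on $(0,\infty)$ that is not identically zero (because $\psi'$ is non-constant). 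For each fixed $s>0$, the Gaussian Fourier identity $e^{-|u-v|^{2}s}=\int e^{i\xi(u-v)}g_s(\xi)\,d\xi$ with a strictly positive density $g_s$, together with independence of $X_1^{(q)},X_2^{(q)},Y_1^{(q)},Y_2^{(q)}$, gives
$$2E[1-e^{-|X_1^{(q)}-Y_1^{(q)}|^{2}s}]-E[1-e^{-|X_1^{(q)}-X_2^{(q)}|^{2}s}]-E[1-e^{-|Y_1^{(q)}-Y_2^{(q)}|^{2}s}]\ =\ \int |\phi_{q,F}(\xi)-\phi_{q,G}(\xi)|^{2}g_s(\xi)\,d\xi\ \ge\ 0,$$
where $\phi_{q,F},\phi_{q,G}$ are the characteristic functions of the $q$-th marginals of $F$ and $G$. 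Integrating this against $d\mu(s)/s$ and applying Fubini yields $e_q\ge 0$. Strictness in the case where the $q$-th marginals differ follows because $\phi_{q,F}\not\equiv\phi_{q,G}$, $g_s>0$ everywhere, and $\mu((0,\infty))>0$.

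Summing $e_q\ge 0$ over $q$ and dividing by $d$ yields $\bar A_{FG}\ge\tfrac12(\bar A_{FF}+\bar A_{GG})$. By monotonicity of $h$ followed by its concavity,
$$h(\bar A_{FG})\ \ge\ h\!\left(\tfrac{\bar A_{FF}+\bar A_{GG}}{2}\right)\ \ge\ \tfrac{h(\bar A_{FF})+h(\bar A_{GG})}{2},$$
which, multiplied by $2$, is exactly $e_{h,\psi}(F,G)\ge 0$. For the equality characterization, the ``if'' direction is immediate: identical one-dimensional marginals give $\bar A_{FF}=\bar A_{FG}=\bar A_{GG}$. For ``only if'', $e_{h,\psi}(F,G)=0$ forces both inequalities above to be equalities; strict monotonicity of $h$ then forces $\bar A_{FG}=\tfrac12(\bar A_{FF}+\bar A_{GG})$, i.e.\ $\sum_q e_q=0$, so every $e_q=0$, and by the per-coordinate strict case $F$ and $G$ share every one-dimensional marginal.

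The main obstacle is the per-coordinate step, which amounts to recognizing $\psi(|\cdot|^{2})$ as a strictly conditionally negative definite kernel on $\R$. The Bernstein representation reduces this to a mixture over $s$ of the kernels $1-e^{-|\cdot|^{2}s}$, for which the Fourier identity makes the bound transparent; the only delicate point is justifying the Fubini swap, which follows from the finiteness of $E\psi(|X_1^{(q)}-X_2^{(q)}|^{2})$, $E\psi(|X_1^{(q)}-Y_1^{(q)}|^{2})$ and $E\psi(|Y_1^{(q)}-Y_2^{(q)}|^{2})$ that is implicit in $\varphi_{h,\psi}^{*}$ being well-defined.
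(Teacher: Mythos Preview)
The paper does not supply its own argument for this lemma; it merely cites Lemma~1 of \cite{sarkar2018some}. Your proof is the standard energy-distance argument and is essentially correct, and it is in fact the route taken in the cited source: show a per-coordinate inequality via the Bernstein representation of $\psi$ and the Gaussian Fourier identity, average, and then push through $h$ using monotonicity followed by concavity.

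Two small points worth tightening. First, Bernstein's theorem applied to $\psi'$ in general gives $\psi(t)=at+\int_{(0,\infty)}\frac{1-e^{-ts}}{s}\,d\mu(s)$ with $a=\mu(\{0\})\ge 0$; you dropped the linear term. This does no harm, since the linear piece contributes $2a\big(EX_1^{(q)}-EY_1^{(q)}\big)^2\ge 0$ to $e_q$, and the non-constancy of $\psi'$ guarantees $\mu\big((0,\infty)\big)>0$, which is exactly what drives the strict equality characterization. Second, in the ``only if'' direction you invoke strict monotonicity of $h$ to pass from $h(\bar A_{FG})=h\big(\tfrac12(\bar A_{FF}+\bar A_{GG})\big)$ to $\bar A_{FG}=\tfrac12(\bar A_{FF}+\bar A_{GG})$; the lemma as stated only says $h$ is concave, but strict monotonicity is implicit in the paper's standing assumption that $h$ is continuous and monotonically increasing with $h(0)=0$ (and a concave nondecreasing function on $\mathbb{R}_+$ that is not eventually constant is automatically strictly increasing). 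With these clarifications your proof is complete.
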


Note that $e_{h,\psi}(F,G)$  
can be viewed as an energy distance \citep[see, e.g.,][]{aslan2005new} between $F$ and $G$. 
In view of Lemma~\ref{part-wise-gdist}, for appropriate choices of $h$ and $\psi$, it is somewhat reasonable to assume that under the alternative $H_1:F\neq G$,  the limiting energy distance between $F$ and $G$ remains bounded away from $0$ (i.e., $\liminf_{d\to\infty} e_{h,\psi}(F,G)>0$). Under this assumption, we can establish the consistency of the test based on $T_{m,n}^{h,\psi}$ in the HDLSS asymptotic regime. This result is given by the following theorem. 

\begin{thm}
Assume that $F$ and $G$ satisfy Assumption (A4) and $\liminf_{d\to\infty} e_{h,\psi}(F,G)$ $>0$. If $\min\{n,m\} \geq 2+2/\alpha$, the power of the level $\alpha$ $(0<\alpha<1)$ based on $T^{h,\psi}_{n,m}$ increases to one as $d$ increases to infinity.
\label{HDLSS-gdist}
\end{thm}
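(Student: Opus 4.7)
The plan is to mimic the proof of Theorem~\ref{HDLSS-L2}, replacing the $\ell_2$-specific Lemma~\ref{inter-point-dist-accu} with a concentration statement for the generalized distance $\varphi_{h,\psi}$ that follows from Assumption (A4), and then showing that $T_{n,m}^{h,\psi}$ asymptotically exceeds the threshold $1/3$. Once this is in hand, high-dimensional consistency follows immediately from Lemma~\ref{permutation-cut-off}: the condition $\min\{n,m\} \ge 2 + 2/\alpha$ forces $c_{1-\alpha} \le \frac{2}{3\alpha(\min\{n,m\}-2)} \le \frac{1}{3}$, so any event of the form $\{T_{n,m}^{h,\psi} > 1/3\}$ is contained in the rejection region almost surely.

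Assumption (A4) together with the continuity of $h$ gives the WLLN-type statements $\varphi_{h,\psi}({\bf X}_1,{\bf X}_2) - \varphi_{h,\psi}^*(F,F) \xrightarrow{P} 0$, $\varphi_{h,\psi}({\bf Y}_1,{\bf Y}_2) - \varphi_{h,\psi}^*(G,G) \xrightarrow{P} 0$ and $\varphi_{h,\psi}({\bf X}_1,{\bf Y}_1) - \varphi_{h,\psi}^*(F,G) \xrightarrow{P} 0$ as $d \to \infty$. Since $e_{h,\psi}(F,G)$ decomposes as $[\varphi_{h,\psi}^*(F,G) - \varphi_{h,\psi}^*(F,F)] + [\varphi_{h,\psi}^*(F,G) - \varphi_{h,\psi}^*(G,G)]$, the hypothesis $\liminf_{d\to\infty} e_{h,\psi}(F,G) > 0$ ensures that, for all sufficiently large $d$, at least one of these two summands is bounded below by a fixed constant $c_0 > 0$. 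I focus on the case $\varphi_{h,\psi}^*(F,G) - \varphi_{h,\psi}^*(F,F) \ge c_0$; the other case is symmetric and is handled by the second block of $T_{n,m}^{h,\psi}$. Combining the gap $c_0$ with the concentration above, every indicator $\mathbbm{1}\{\varphi_{h,\psi}({\bf Y}_\ell,{\bf X}_i) \le \varphi_{h,\psi}({\bf X}_j,{\bf X}_i)\}$ appearing in the first block of $T_{n,m}^{h,\psi}$ converges to $0$ in probability, so the cross-sample averages $S_{ij}^F := m^{-1}\sum_{\ell}\mathbbm{1}\{\cdot\}$ vanish in probability, uniformly over the finitely many pairs $(i,j)$.

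For the within-sample averages $R_{ij}^F := (n-2)^{-1}\sum_{k \ne i,j}\mathbbm{1}\{\varphi_{h,\psi}({\bf X}_k,{\bf X}_i)\le \varphi_{h,\psi}({\bf X}_j,{\bf X}_i)\}$, I will exploit a deterministic rank identity that holds whenever the $n-1$ values $\{\varphi_{h,\psi}({\bf X}_k,{\bf X}_i):k\ne i\}$ are pairwise distinct: $R_{ij}^F = (r_j^{(i)}-1)/(n-2)$, where $r_j^{(i)}$ is the rank of $\varphi_{h,\psi}({\bf X}_j,{\bf X}_i)$ among those $n-1$ quantities. Summing $(R_{ij}^F)^2$ first over $j \ne i$ and then over $i$ gives $\frac{1}{n(n-1)}\sum_{i \ne j}(R_{ij}^F)^2 = \frac{2n-3}{6(n-2)} = \frac{1}{3} + \frac{1}{6(n-2)}$, which is strictly larger than $1/3$. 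Together with $S_{ij}^F \xrightarrow{P} 0$, this forces the first block of $T_{n,m}^{h,\psi}$ to converge in probability to $\frac{2n-3}{6(n-2)}$, whence $P(T_{n,m}^{h,\psi} > 1/3) \to 1$ because $T_{n,m}^{h,\psi}$ dominates each of its blocks. The cut-off bound then yields $P(T_{n,m}^{h,\psi} > c_{1-\alpha}) \to 1$, which is the desired power statement. The main technical obstacle is the case split driven by the two summands of $e_{h,\psi}(F,G)$: the concentration must be strong enough to kill every relevant indicator irrespective of which summand dominates. A minor point is the tacit almost-sure distinctness of pairwise generalized distances needed for the rank identity, which is automatic under mild continuity of $F$ and $G$.
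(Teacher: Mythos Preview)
Your argument is essentially correct and reaches the same endpoint as the paper---$P(T_{n,m}^{h,\psi}>1/3)\to 1$, then Lemma~\ref{permutation-cut-off}---but the packaging differs. The paper first extracts a sub-subsequence along which the three centers $\varphi_{h,\psi}^*(F,F)$, $\varphi_{h,\psi}^*(G,G)$, $\varphi_{h,\psi}^*(F,G)$ actually converge to limits $\theta_1,\theta_2,\theta_3$, then invokes its Lemma~A.4 (which, under $\theta_3>\min\{\theta_1,\theta_2\}$, carries out a combinatorial count equivalent to your rank identity). You instead work directly with the differences $\varphi_{h,\psi}-\varphi_{h,\psi}^*\stackrel{P}{\to}0$ and compute the within-sample block via the rank identity $\tfrac{1}{n(n-1)}\sum_{i\ne j}(R_{ij}^F)^2=\tfrac{1}{3}+\tfrac{1}{6(n-2)}$; this is arguably cleaner because it avoids manufacturing convergent limits and collapses the three sub-cases of the paper's Lemma~A.4 into one computation.

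One point to tighten: the phrase ``I focus on the case $\varphi_{h,\psi}^*(F,G)-\varphi_{h,\psi}^*(F,F)\ge c_0$'' reads as a WLOG, but which of the two summands exceeds $c_0$ can depend on $d$, so your subsequent assertion that the cross-sample indicators ``converge to $0$ in probability'' is not well-posed along the full sequence. The fix is routine. Either split $\mathbb{N}$ into the two index sets $\{d:\text{summand 1}\ge c_0\}$ and $\{d:\text{summand 2}\ge c_0\}$ and run your argument on each (this is what the paper's subsequence device effectively does), or argue non-asymptotically: given $\epsilon>0$, use the concentration of the finitely many pairwise distances to find $D$ so that for every $d>D$ all of them lie within $c_0/3$ of their centers with probability exceeding $1-\epsilon$; on that event, whichever summand is $\ge c_0$ forces the corresponding block's cross-sample indicators to vanish \emph{exactly}, and your rank identity then gives that block the deterministic value $\tfrac{1}{3}+\tfrac{1}{6(n-2)}$ (or with $m$), so $T_{n,m}^{h,\psi}>1/3$. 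This yields $P(T_{n,m}^{h,\psi}>1/3)>1-\epsilon$ for all $d>D$ without any case-by-case limit statements.
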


For $h(t)=\sqrt{t}$ and $\psi(t)=t$, $\varphi_{h,\psi}$ turns out to be the $\ell_2$ metric (up to a multiplicative constant), but this choice of $\psi$ does not have a non-constant completely monotone derivative \citep[see, e.g.,][]{baringhaus2010rigid} as mentioned in Lemma~\ref{part-wise-gdist}. But there are several other choices of $\psi$
that satisfy this property. For instance, one can use $\psi_1(t) = \sqrt{t}$, $\psi_2(t) = 1-e^{-t/2}$ or $\psi_3(t) = \log(1+t)$ with $h(t)=t$ in all three cases. Note that for the first two choices of $\psi$, $\varphi_{h,\psi}$ turns
out to be a distance function (the first one leads to a scalar multiple of the $\ell_1$ distance), but that is not the case for $\psi_3$. In that case, we can call it a dissimilarity measure. Note that for the $\ell_2$ distance (i.e., $\psi(t)=t$ and 
$h(t)=\sqrt{t}$), under (A1)-(A3), we have $\liminf_{d\to\infty} e_{h,\psi}(F,G) = 2\sqrt{\nu^2+\sigma_F^2+\sigma_G^2} -\sigma_F\sqrt{2}-\sigma_G\sqrt{2}$, which turns out to be positive if and only if either $\nu^2>0$ and/or $\sigma_F \neq \sigma_G$, i.e. $\nu^2+(\sigma_F-\sigma_G)^2>0$, the condition mentioned in Theorem~\ref{HDLSS-L2}. In Example 3, we have  $\nu^2+(\sigma_F-\sigma_G)^2=0$ but $\liminf_{d\to\infty} e_{h,\psi}(F,G)>0$  for $\psi=\psi_1,\psi_2,\psi_3$ with $h(t)=t$. In this example, while the test based on $T_{n,m}^{\ell_2}$ had poor performance, those based on $T_{n,m}^{h,\psi}$ with other three choices of $\psi$ (henceforth referred to as $T_{n,m}^{\ell_1}$, $T_{n,m}^{\exp}$ and $T_{n,m}^{\log}$, respectively) performed well (see Figure 2). To carry out further investigation on the high dimensional behavior of these tests, we consider two other examples. 

\vspace{0.1in}
\textbf{Example 4}  Both $F$ and $G$ have  independent and identically distributed coordinate variables. While in $F$, they follow the standard Cauchy distribution, in $G$ they have a location shift of one unit. 

\vspace{0.1in}
\textbf{Example 5}  Two normal distributions $F = \mathcal{N}_d(d^{-1/2}\,{\bf 1}_d,{\bf I}_d)$ and $G = \mathcal{N}_d(-d^{-1/2}\,{\bf 1}_d,{\bf I}_d)$ differ only in their means, where the Euclidean distance between the two means remains constant as the dimension varies.

\begin{figure}[!h]
    \centering
    \begin{tikzpicture}
    \begin{axis}[xmin = 1, xmax = 10, xlabel = {$\log_2(d)$}, ylabel = {Power Estimates}, title = {\bf Example 3}]
      \addplot[color = red,   mark = *, step = 1cm,very thin]coordinates{(1,0.074)(2,0.072)(3,0.034)(4,0.042)(5,0.034)(6,0.042)(7,0.044)(8,0.068)(9,0.04)(10,0.06)};

      \addplot[color = violet, mark = *, step = 1cm,very thin]coordinates{(1,0.082)(2,0.084)(3,0.094)(4,0.128)(5,0.208)(6,0.442)(7,0.876)(8,1)(9,1)(10,1)};

      \addplot[color = purple, mark = *, step = 1cm,very thin]coordinates{(1,0.212)(2,0.326)(3,0.548)(4,0.784)(5,0.954)(6,1)(7,1)(8,1)(9,1)(10,1)};

      \addplot[color = magenta, mark = *, step = 1cm,very thin]coordinates{(1,0.092)(2,0.112)(3,0.188)(4,0.288)(5,0.574)(6,0.918)(7,1)(8,1)(9,1)(10,1)};

    \end{axis}
    \end{tikzpicture}
    \begin{tikzpicture}
    \begin{axis}[xmin = 1, xmax = 10, xlabel = {$\log_2(d)$}, ylabel = {Power Estimates}, title = {\bf Example 4}]
\addplot[color = red, mark = *, step = 1cm,very thin]coordinates{(1,0.73)(2,0.368)(3,0.142)(4,0.084)(5,0.062)(6,0.066)(7,0.034)(8,0.05)(9,0.034)(10,0.056)};

\addplot[color = violet, mark = *, step = 1cm,very thin]coordinates{(1,0.622)(2,0.242)(3,0.108)(4,0.058)(5,0.068)(6,0.08)(7,0.03)(8,0.062)(9,0.056)(10,0.052)};

\addplot[color = purple, mark = *, step = 1cm,very thin]coordinates{(1,0.938)(2,0.99)(3,1)(4,1)(5,1)(6,1)(7,1)(8,1)(9,1)(10,1)};

\addplot[color = magenta, mark = *, step = 1cm,very thin]coordinates{(1,0.676)(2,0.452)(3,0.43)(4,0.614)(5,0.928)(6,1)(7,1)(8,1)(9,1)(10,1)};

    \end{axis}
    \end{tikzpicture}
    \begin{tikzpicture}
     \begin{axis}[xmin = 1, xmax = 10, xlabel = {$\log_2(d)$}, ylabel = {Power Estimates}, title = {\bf Example 5}]
\addplot[color = red, mark = *, step = 1cm,very thin]coordinates{(1,1)(2,1)(3,1)(4,1)(5,0.994)(6,0.878)(7,0.452)(8,0.178)(9,0.078)(10,0.062)};

\addplot[color = violet, mark = *, step = 1cm,very thin]coordinates{(1,1)(2,1)(3,1)(4,1)(5,0.988)(6,0.856)(7,0.456)(8,0.19)(9,0.076)(10,0.064)};

\addplot[color = purple, mark = *, step = 1cm,very thin]coordinates{(1,1)(2,1)(3,1)(4,1)(5,0.982)(6,0.806)(7,0.452)(8,0.204)(9,0.086)(10,0.068)};

\addplot[color = magenta, mark = *, step = 1cm,very thin]coordinates{(1,1)(2,1)(3,1)(4,1)(5,0.99)(6,0.857)(7,0.448)(8,0.188)(9,0.088)(10,0.068)};

    \end{axis}
    \end{tikzpicture}

    \caption{Power of BD-$\ell_2$ (\textcolor{red}{$\tikzcircle{2pt}$}),  BD-$\ell_1$ ({$\tikzcirclev{2pt}$}), BD-$\exp$ ({$\tikzcirclep{2pt}$}) and BD-$\log$ ({$\tikzcirclem{2pt}$}) tests in Examples 3-5.}
    \label{fig:HDLSS-gdist-illustration}
\end{figure}

Here also, we generated  50 observations from each distribution, and the process was repeated 500 times to estimate the power of the tests. The results are reported in Figure \ref{fig:HDLSS-gdist-illustration}.

In Example 4, the ball divergence tests based on $T_{n,m}^{\ell_1}$ and $T_{n,m}^{\ell_2}$ (henceforth referred to as BD-$\ell_2$ and BD-$\ell_1$ tests) did not work well, but those based on $T_{n,m}^{\exp}$ and $T_{n,m}^{\log}$ (henceforth referred to as BD-$\exp$ and BD-$\log$ tests) had excellent performance. Among them, BD-$\exp$ had an edge. Note that in this example, the coordinate variables in $F$ and $G$ do not have finite moments. That affected the performance of BD-$\ell_1$ and BD-$\ell_2$. Since $\psi_2(t)=1-e^{-t/2}$ is a bounded function, we do not have such problems for BD-$\exp$. Note that Assumption (A4) holds for this choice of $\psi$. It holds for $\psi_3(t)=\log(1+t)$ as well. This was the reason behind the good performance of the tests based on these two choices of $\psi$. 

In Example 5, the Mahalanobis distance between the two distributions remains the same as the dimension increases. This can be viewed as having a constant difference along the first coordinate, while the other coordinate variables serve as noise.
Clearly, the noise increases with the dimension, and we have $\lim_{d \rightarrow \infty} e_{h,\psi}(F, G) = 0$ for all choices of $h$ and $\psi$ considered here. So, as expected, the powers of all tests gradually dropped as the dimension increased. In this situation, if we want the powers of the tests to increase with the dimension, the only possible option is to increase the sample sizes appropriately with the dimension. We consider this type of situation in the next section.

\section{Behavior of the proposed tests when the sample sizes increase with the dimension of the data}
\label{High Dimensional Performance under Shrinking Alternatives}

In this section, we deal with the cases, where the two distributions $F$ and $G$ gradually become close as the dimension increases. We call these types of alternatives as shrinking alternatives. In such cases, the power of any test based on fixed sample sizes is expected to go down as the dimension increases. Therefore, to achieve better performance in high-dimension, one needs to increase the sample sizes with the dimension. Now, one may be curious to know whether it is possible to increase the sample sizes with the dimension at an appropriate rate such that one can construct a valid level $\alpha$ ($0<\alpha<1$) test with power converging to $1$ as the dimension increases. We shall show that this is possible for our tests as long as the ball divergence measure between $F$ and $G$ shirks to $0$
at an appropriately slower rate. This is obtained by establishing the minimax rate optimality of our tests. We  discuss it in the following sub-section.  


\subsection{Minimax rate optimality}
\label{Minimax Rate Optimality}

Consider a testing problem between a pair of hypotheses $H_0':\Theta_{\ell_2}^2(F, G)=0$ and $H_1':\Theta_{\ell_2}^2(F, G)>\epsilon$ for some $\epsilon>0$. Define $\P_{F,G}^{(n,m)}$ as the joint distribution of ${\bf X}_1,{\bf X}_2,\ldots,{\bf X}_n,{\bf Y_1},{\bf Y}_2,\ldots,{\bf Y}_m$, where ${\bf X}_1,{\bf X}_2,\ldots,{\bf X}_n \stackrel{iid}{\sim}F$ and ${\bf Y}_1,{\bf Y}_2,\ldots,{\bf Y}_m \stackrel{iid}{\sim}F$. Let 
$\mathcal{F}(\epsilon):=\{(F, G)\mid \Theta_{\ell_2}^2(F, G)>\epsilon\}$ denote the class of alternatives, and for a given significance level $\alpha\in(0,1)$, $\mathbb{T}_{n,m,d}(\alpha)$ denote the class of all level $\alpha$ test functions $\phi:\mathcal{U}\to \{0,1\}$. The minimax type II error rate for this class is defined as
$$R_{n,m,d}(\epsilon) = \inf_{\phi\in \mathbb{T}_{n,m,d}(\alpha)}\sup_{(F,G)\in \mathcal{F}(\epsilon)} \P^{(n,m)}_{F,G}(\phi=0).$$
Here, we want to find an $\epsilon_0=\epsilon_0(n,m,d)$, for which the following conditions hold.
\begin{enumerate}
    \item[(a)] \label{cond_a} 
    For any $0<\zeta<1-\alpha$, there exists a constant $c(\alpha,\zeta)>0$ such that for all $0 < c < c(\alpha,\zeta)$, we have 
    $\liminf\limits_{n,m,d \rightarrow \infty} R_{n,m,d}(c~\epsilon_0({n,m,d})) \geq \zeta$ .
    
    \item[(b)] \label{cond_b} There exists a level $\alpha$ test $\phi_0$ such that for any $0<\zeta<1-\alpha$, we can find a constant $C(\alpha,\zeta) > 0$ for which $\limsup\limits_{n,m,d \rightarrow \infty} \sup\limits_{(F,G)\in\mathcal{F}(c~\epsilon_0({n,m,d}))}\P_{F,G}^{(n,m)}\{\phi_0=0\}\leq \zeta$ for all $c>C(\alpha,\zeta)$, or in other words, $\limsup\limits_{n,m,d \rightarrow \infty} R_{n,m,d}(c~\epsilon_0({n,m,d})) \leq \zeta$ for all $c > C(\alpha,\zeta)$. 
\end{enumerate}

The rate $\epsilon_0({n,m,d})$ (which is unique up to a constant) is called the minimax rate of separation for this problem, and the test $\phi_0$ is called the minimax rate optimal test. Theorem 4.1 below shows that if $\epsilon$ is of smaller order than $\lambda(n,m):= (1/\sqrt{n}+1/\sqrt{m})^2$, for all level $\alpha$ tests. the maximum type II error rate is bounded away from $0$.

\begin{thm}
For $0<\zeta<1-\alpha$, there exists a constant $c_0(\alpha,\zeta)$ such that for $\lambda({n,m}) = (1/\sqrt{n}+1/\sqrt{m})^2$, the minimax type II error rate $R_{n,m,d}(c\lambda(n,m))$ is lower bounded by $\zeta$ for all $0<c<c_0(\alpha, \zeta)$.
\label{minimax-lower-bound}
\end{thm}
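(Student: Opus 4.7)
The plan is to invoke Le Cam's classical two-point lower bound for composite testing. For a reference distribution $F_0$ on $\mathbb{R}^d$ and parameters $a, b > 0$ to be chosen, I would take the alternative pair $(F_1, G_1)$ to be a small shift of $(F_0, F_0)$ with $\Theta^2_{\ell_2}(F_1,G_1) \gtrsim (a+b)^2$. Writing $\P_0 = \P^{(n,m)}_{F_0,F_0}$ and $\P_1 = \P^{(n,m)}_{F_1,G_1}$, the standard inequality
$$\P_1(\phi=0) \;\geq\; \P_0(\phi=0) - \|\P_1 - \P_0\|_{TV} \;\geq\; 1-\alpha-\|\P_1-\P_0\|_{TV}$$
valid for any level-$\alpha$ test $\phi$, reduces the task to two pieces: control of $\|\P_1-\P_0\|_{TV}$ from above under a budget on $(a,b)$, and a matching lower bound on $\Theta^2_{\ell_2}(F_1,G_1)$ in terms of $(a+b)^2$.

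A convenient concrete choice is $F_0 = \mathcal{N}_d({\bf 0}, I_d)$ and $F_1 = \mathcal{N}_d(a\,{\bf v}, I_d)$, $G_1 = \mathcal{N}_d(-b\,{\bf v}, I_d)$ for some unit vector ${\bf v}$. The product structure, the Gaussian KL identity, and Pinsker's inequality yield
$$\|\P_1-\P_0\|_{TV} \;\leq\; \sqrt{\tfrac{1}{2}\mathrm{KL}(\P_1\Vert\P_0)} \;=\; \tfrac{1}{2}\sqrt{n a^2 + m b^2},$$
so that $\|\P_1-\P_0\|_{TV} \leq 1-\alpha-\zeta$ whenever $n a^2 + m b^2 \leq K := [2(1-\alpha-\zeta)]^2$. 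Maximising $(a+b)^2$ subject to this budget via Lagrange multipliers gives the optimum $na=mb$ and value $(a+b)^2 = K\bigl(\tfrac{1}{n}+\tfrac{1}{m}\bigr) \geq \tfrac{K}{2}\lambda(n,m)$, where the inequality follows from $\lambda(n,m) \leq 2(1/n + 1/m)$ by AM-GM.

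For the ball-divergence lower bound, the Radon--Nikodym derivative $f_1/f_0 = 1 + a\langle{\bf v},{\bf x}\rangle + O(a^2)$ (and its analogue for $G_1$) yields, for every measurable $B$,
$$F_1(B) - G_1(B) \;=\; (a+b)\,\mathbb{E}_{F_0}\!\bigl[\langle{\bf v},{\bf X}\rangle\,\mathbbm{1}\{{\bf X}\in B\}\bigr] + O\bigl((a+b)^2\bigr).$$
Substituting into the definition of $\Theta^2_{\ell_2}$ and using rotational symmetry of $F_0$ to discard ${\bf v}$-dependence, the leading term produces $\Theta^2_{\ell_2}(F_1,G_1) \geq c_0\,(a+b)^2$ for some positive constant $c_0$ and all sufficiently small $a+b$. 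Combining with the previous paragraph, setting $c_0(\alpha,\zeta) := c_0\,K(\alpha,\zeta)/2$ guarantees $(F_1,G_1) \in \mathcal{F}(c\lambda(n,m))$ for every $c < c_0(\alpha,\zeta)$, while $\|\P_1-\P_0\|_{TV} \leq 1-\alpha-\zeta$. Le Cam's inequality then gives $R_{n,m,d}(c\lambda(n,m)) \geq \zeta$.

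The main obstacle is pinning down the constant $c_0$ and verifying that it is bounded below uniformly in the ambient dimension $d$. Because $\Theta^2_{\ell_2}$ integrates squared ball-probability differences across all centres and radii, and because $\|{\bf v}-{\bf u}\|$ under $F_0$ concentrates around $\sqrt{2d}$, the integral defining $c_0$ could a priori degenerate. The cleanest route is to exploit spherical symmetry of $F_0$ to reduce to a one-dimensional projection (taking ${\bf v} = {\bf e}_1$ and integrating out the remaining coordinates), showing that $\mathbb{E}_{F_0}[X^{(1)}\mathbbm{1}\{{\bf X}\in \mathbb{B}({\bf u},\|{\bf v}-{\bf u}\|)\}]$ is not almost-everywhere zero; alternatively, spreading the shift uniformly across coordinates (${\bf v} = d^{-1/2}{\bf 1}_d$) or replacing $F_0$ with a non-Gaussian reference whose ball masses are directly computable may sidestep the dimensional concentration.
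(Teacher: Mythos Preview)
Your overall framework---Le Cam's two-point bound, Pinsker's inequality, and a Gaussian location shift---is exactly what the paper uses. The genuine gap is in the choice of reference distribution. With $F_0=\mathcal{N}_d({\bf 0},I_d)$ and a shift along a single direction ${\bf v}$, the ball divergence $\Theta_{\ell_2}^2(F_1,G_1)$ is \emph{not} bounded below by $c_0(a+b)^2$ with a dimension-free $c_0$; it is in fact of order $(a+b)^2/d$. To see why, note that for i.i.d.\ ${\bf u},{\bf v},{\bf X}\sim F_0$ in high dimension, the event $\{\|{\bf X}-{\bf u}\|\leq\|{\bf v}-{\bf u}\|\}$ is, to leading order, the half-space $\{{\bf X}^\top{\bf u}\geq{\bf v}^\top{\bf u}\}$, and your linear functional $\E_{F_0}[X^{(1)}\mathbbm{1}\{{\bf X}\in\mathbb{B}\}]$ is approximately $(u^{(1)}/\|{\bf u}\|)\,\phi({\bf v}^\top{\bf u}/\|{\bf u}\|)$. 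Squaring and averaging over ${\bf u},{\bf v}$ picks up a factor $\E[(u^{(1)})^2/\|{\bf u}\|^2]=1/d$, so the leading constant is $O(1/d)$. Hence the pair $(F_1,G_1)$ only lands in $\mathcal{F}(c\lambda(n,m))$ for $c\lesssim 1/d$, which does not give a dimension-free $c_0(\alpha,\zeta)$. Your own worry about ``dimensional concentration'' is therefore not a technical nuisance but fatal to this construction; neither spreading the shift across all coordinates (equivalent by rotational invariance) nor projecting to one dimension after the fact fixes it.

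The paper sidesteps the issue by taking the alternative pair to be \emph{degenerate} rather than full-dimensional: $P_1$ and $P_2$ are the laws of $(\xi_1,0,\ldots,0)^\top$ and $(\xi_2,0,\ldots,0)^\top$ with $\xi_1\sim\mathcal{N}(\mu_1,1)$, $\xi_2\sim\mathcal{N}(\mu_2,1)$ and $\mu_1=\sqrt{2}(1-\alpha-\zeta)/\sqrt{n}$, $\mu_2=-\sqrt{2}(1-\alpha-\zeta)/\sqrt{m}$. All pairwise $\ell_2$ distances then reduce to one-dimensional differences $|\xi_i-\xi_j|$, so the ball divergence is a fixed one-dimensional quantity that does not see $d$ at all. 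A direct calculation (the paper's Lemma~A.6, via the same mean-value-theorem step you sketch) gives $\Theta_{\ell_2}^2(P_1,P_2)\geq C(1/\sqrt{n}+1/\sqrt{m})^2$ with $C$ depending only on $1-\alpha-\zeta$, while the KL cost is $n\mu_1^2/2+m\mu_2^2/2=2(1-\alpha-\zeta)^2$. This is the simplest instance of your third suggestion (``a reference whose ball masses are directly computable''), and it closes the argument cleanly.
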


The above result shows that the minimax rate of separation cannot be of order smaller than $O(1/\sqrt{n}+1/\sqrt{m})^2$. We now show that for $\epsilon_0(n,m,d)=\lambda(n,m)$, the permutation test based on $T_{n,m}^{\ell_2}$ satisfies the condition (b) stated above. 

\begin{thm}
For $0<\zeta<1-\alpha$, there exists a constant $C_0(\alpha,\zeta)$ such that asymptotically the maximum type II error of the test based on $T_{n,m}^{\ell_2}$ over $\mathcal{F}(c\lambda(n,m))$  is uniformly bounded above by $\zeta$ for all $c>C_0(\alpha,\zeta)$, i.e.,
\vspace{-0.025in}
$$\limsup\limits_{n,m,d \rightarrow \infty}\sup_{(F,G)\in \mathcal{F}(c\lambda({n,m}))}P^{(n,m)}_{F,G}(T_{n,m}\leq c_{1-\alpha})\leq \zeta~~\mbox{for all } c>C_0(\alpha,\zeta).
\vspace{-0.1in}$$
\label{minimax-upper-bound}
\end{thm}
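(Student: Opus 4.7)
The plan is to combine Lemma~\ref{permutation-cut-off}, which gives $c_{1-\alpha}\le 2/(3\alpha(\min\{n,m\}-2))\le K_\alpha\lambda(n,m)$ almost surely (since $\lambda(n,m)\ge 1/\min\{n,m\}$), with mean and variance bounds for $T_{n,m}^{\ell_2}$. Once I show that $T_{n,m}^{\ell_2}$ concentrates around $\Theta_{\ell_2}^2(F,G)$ at scale $\lambda(n,m)$, a Chebyshev argument will force $T_{n,m}^{\ell_2}>c_{1-\alpha}$ with probability at least $1-\zeta$ whenever $\Theta_{\ell_2}^2(F,G)>c\lambda(n,m)$ for $c>C_0(\alpha,\zeta)$ sufficiently large, uniformly over $(F,G)\in\F(c\lambda(n,m))$.

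For the mean, I would write the generic summand of the first block as $(A_{ij}-B_{ij})^2$, where $A_{ij}=(n-2)^{-1}\sum_{k\ne i,j}\mathbbm{1}\{\|{\bf X}_k-{\bf X}_i\|\le\|{\bf X}_j-{\bf X}_i\|\}$ and $B_{ij}=m^{-1}\sum_\ell\mathbbm{1}\{\|{\bf Y}_\ell-{\bf X}_i\|\le\|{\bf X}_j-{\bf X}_i\|\}$. Conditionally on $({\bf X}_i,{\bf X}_j)$, $A_{ij}$ and $B_{ij}$ are independent averages of Bernoullis with means $F(\B_{ij})$ and $G(\B_{ij})$, where $\B_{ij}=\B({\bf X}_i,\|{\bf X}_j-{\bf X}_i\|)$. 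Since $\E[(A_{ij}-B_{ij})^2\mid{\bf X}_i,{\bf X}_j]=(F(\B_{ij})-G(\B_{ij}))^2+\mathrm{Var}(A_{ij}\mid\cdot)+\mathrm{Var}(B_{ij}\mid\cdot)$ with the two conditional variances bounded by $1/(4(n-2))$ and $1/(4m)$ respectively, unconditioning and adding the symmetric second block yields $\E[T_{n,m}^{\ell_2}]=\Theta_{\ell_2}^2(F,G)+O(\lambda(n,m))$ with a constant free of $(F,G)$.

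For the variance, the key observation is that $T_{n,m}^{\ell_2}$ is, up to Bernoulli sampling noise, a second-order $V$-statistic with bounded kernel $h({\bf x},{\bf y})=(F(\B({\bf x},\|{\bf y}-{\bf x}\|))-G(\B({\bf x},\|{\bf y}-{\bf x}\|)))^2\in[0,1]$, symmetrically on the $F$ and $G$ sides. Because $h\ge 0$ and $h\le 1$, $\E[h^2]\le\E[h]\le\Theta_{\ell_2}^2(F,G)$ and similarly $\E[h_1({\bf X}_1)^2]\le\Theta_{\ell_2}^2(F,G)$ for the projection $h_1({\bf x})=\E[h({\bf x},{\bf X}_2)]$. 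The standard second-order $V$-statistic variance formula then gives the $V$-statistic part of $\mathrm{Var}(T_{n,m}^{\ell_2})$ as $O(\lambda(n,m)\Theta_{\ell_2}^2(F,G))$, while the Bernoulli noise contributes $O(\lambda(n,m)^2)$; combining, $\mathrm{Var}(T_{n,m}^{\ell_2})\le C_1\lambda(n,m)\Theta_{\ell_2}^2(F,G)+C_2\lambda(n,m)^2$ with universal $C_1,C_2$. For $(F,G)\in\F(c\lambda(n,m))$, Chebyshev's inequality then gives
\[\P_{F,G}^{(n,m)}\{T_{n,m}^{\ell_2}\le c_{1-\alpha}\}\le\P_{F,G}^{(n,m)}\{|T_{n,m}^{\ell_2}-\E T_{n,m}^{\ell_2}|\ge(c-K_\alpha-K_0)\lambda(n,m)\}\le\frac{C_1 c+C_2}{(c-K_\alpha-K_0)^2},\]
with $K_0$ absorbing the mean-bound constant. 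For $c>C_0(\alpha,\zeta)$ large, the right side is $\le\zeta$ uniformly, and taking supremum followed by $\limsup$ closes the argument.

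The main obstacle is cleanly extracting the Bernoulli-noise contribution and showing it behaves like a lower-order $V$-statistic whose variance is $O(\lambda(n,m)^2)$: the per-term variance is $O(\lambda(n,m))$, but there are $O(n^2)$ summands sharing ${\bf X}_i$'s, so a naive bound gives $O(1)$, and one must exploit the conditional centering of the Bernoulli fluctuations given $({\bf X},{\bf Y})$ to recover the $\lambda(n,m)^2$ rate. Verifying that cross-covariances across distinct $(i,j)$ pairs either vanish by this conditional centering or are suppressed by the $\Theta_{\ell_2}^2$-factor (via the bound $\E[(F(\B_{ij})-G(\B_{ij}))^2\psi({\bf X}_i,{\bf X}_j)]\le\|\psi\|_\infty\Theta_{\ell_2}^2(F,G)$ for bounded $\psi$) is the most delicate bookkeeping step.
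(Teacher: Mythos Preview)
Your proposal is correct and follows essentially the same route as the paper: bound $c_{1-\alpha}$ by a constant times $\lambda(n,m)$ via Lemma~\ref{permutation-cut-off}, establish $\E[T_{n,m}^{\ell_2}]=\Theta_{\ell_2}^2(F,G)+O(\lambda(n,m))$ and $\mathrm{Var}(T_{n,m}^{\ell_2})\le C_1\lambda(n,m)\,\Theta_{\ell_2}^2(F,G)+C_2\lambda(n,m)^2$ with universal constants, and finish with Chebyshev to obtain the bound $(C_1c+C_2)/(c-K)^2$. The only organizational difference is that the paper records the mean and variance estimates as separate lemmas (Lemmas~A.1 and A.2) and obtains the variance bound by writing $T_{n,m}^{\ell_2}$ directly as a degree-$(4,2)$ $U$-statistic---so that your ``Bernoulli noise'' is absorbed into the lower-order Hoeffding terms automatically rather than handled as a separate piece---but the key step, bounding the first-order projection variances by a multiple of $\Theta_{\ell_2}^2(F,G)$ using $0\le h\le 1$, is identical.
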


Theorems 4.1 and 4.2 together show that the minimax rate of separation  $\epsilon_0({n,m,d})=(1/\sqrt{n}+1/\sqrt{m})^2$ does not depend on the dimension, and they also establish the minimax rate optimality of the  permutation test based on $T_{n,m}^{\ell_2}$ for the class of alternatives $\mathcal{F}(\epsilon)$.


\subsection{Performance under shrinking alternatives}
\label{Performance Under Shrinking Alternatives}

Theorem \ref{minimax-upper-bound} gives us a lower bound $\lambda({n,m})$ on the rate of $\Theta_{\ell_2}^2(F,G)$ that enables us to detect the difference between $F$ and $G$ using the permutation test based on $T_{n,m}^{\ell_2}$. If we increase the sample sizes $n$ and $m$ with the dimensions $d$ such that $\lambda({n,m})$ converges to $0$ at a faster rate than $\Theta_{\ell_2}^2(F,G)$ (i.e., $\Theta_{\ell_2}^2(F,G)/\lambda({n,m}) \rightarrow \infty$  as $d \rightarrow \infty$), the test based on $T_{n,m}^{\ell_2}$ turns out to be consistent. This result is asserted by the following theorem. 

\begin{thm}
Suppose that $n$ and $m$, the sample sizes from the two distributions $F$ and $G$, grow as a function of the dimension $d$ in such a way that $\lim_{d\to\infty}\Theta_{\ell_2}^2(F,G)/\lambda({n,m}) = \infty$. Then for any fixed $\alpha$ $(0<\alpha<1)$, the power of the level $\alpha$ test based on $T_{n,m}^{\ell_2}$ converges to one as dimension increases to infinity.
\label{High-Dimension-L2}
\end{thm}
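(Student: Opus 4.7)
The plan is to deduce Theorem \ref{High-Dimension-L2} as a direct corollary of Theorem \ref{minimax-upper-bound}. The hypothesis $\Theta_{\ell_2}^2(F,G)/\lambda(n,m) \to \infty$ says exactly that, no matter how large a constant $c$ we fix, the pair $(F,G)$ eventually satisfies $\Theta_{\ell_2}^2(F,G) > c\,\lambda(n,m)$, so $(F,G)$ lies in the class $\mathcal{F}(c\,\lambda(n,m))$ for all sufficiently large $d$. The conclusion of Theorem \ref{minimax-upper-bound}, applied along this sequence, then pins the type II error down uniformly.

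Concretely, I would first fix an arbitrary $\zeta \in (0, 1-\alpha)$ and apply Theorem \ref{minimax-upper-bound} to obtain a constant $C_0(\alpha, \zeta)$ such that
$$\limsup_{n,m,d \to \infty}\; \sup_{(F,G) \in \mathcal{F}(c\,\lambda(n,m))} \P^{(n,m)}_{F,G}\bigl(T_{n,m}^{\ell_2} \leq c_{1-\alpha}\bigr) \leq \zeta$$
for every $c > C_0(\alpha, \zeta)$. Pick any such $c$. By the hypothesis, there exists $d_0$ such that the particular sequence of pairs $(F,G)$ of interest lies in $\mathcal{F}(c\,\lambda(n,m))$ for all $d \geq d_0$, and so its type II error is dominated by the supremum above. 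This yields $\limsup_{d\to\infty} \P^{(n,m)}_{F,G}(T_{n,m}^{\ell_2} \leq c_{1-\alpha}) \leq \zeta$. Letting $\zeta$ run through a sequence tending to $0$, I get that the type II error vanishes, which is exactly the assertion that the power converges to one.

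There is essentially no obstacle, since the heavy machinery (concentration of $T_{n,m}^{\ell_2}$ around $\Theta_{\ell_2}^2(F,G)$ and control of the permutation cut-off $c_{1-\alpha}$ via Lemma \ref{permutation-cut-off}) has already been assembled for Theorem \ref{minimax-upper-bound}. The only minor bookkeeping point is that the joint limit $n,m,d \to \infty$ in that theorem must be compatible with the single-index regime $d \to \infty$ here; this is automatic because $\Theta_{\ell_2}^2(F,G)$ is uniformly bounded (the integrand is at most $1$ and the total measure $dF\,dF + dG\,dG$ has mass $2$), so $\Theta_{\ell_2}^2/\lambda \to \infty$ forces $\lambda(n,m) = (1/\sqrt{n} + 1/\sqrt{m})^2 \to 0$, and hence $\min\{n,m\} \to \infty$, as $d \to \infty$.
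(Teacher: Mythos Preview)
Your proposal is correct and follows essentially the same approach as the paper: both deduce the result directly from Theorem \ref{minimax-upper-bound} by noting that $\Theta_{\ell_2}^2(F,G)/\lambda(n,m)\to\infty$ places $(F,G)$ in $\mathcal{F}(c\lambda(n,m))$ for any fixed $c$ eventually, so the type II error is driven to zero. Your version is in fact more carefully argued than the paper's one-line proof, in particular your observation that boundedness of $\Theta_{\ell_2}^2$ forces $\min\{n,m\}\to\infty$ cleanly reconciles the single-index limit $d\to\infty$ with the joint limit in Theorem \ref{minimax-upper-bound}.
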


It is easy to see that if $\liminf_{d\to\infty}\Theta_{\ell_2}^2(F,G)>0$, the assumption in Theorem \ref{High-Dimension-L2} holds even if $n$ and $m$ grow very slowly 
with $d$. For such examples, one can expect to get good results even in the HDLSS setup, and we have observed the same in our numerical studies. In Section~\ref{HDLSS Asymptotics Under Generalized Distances}, we have seen that in the HDLSS setup, if $F$ and $G$ satisfy the conditions of Theorem \ref{HDLSS-L2}, we have $\liminf_{d\to\infty}\Theta_{\ell_2}^2(F, G) \geq 1/3$. As expected, in such cases we have the consistency of the test when $m$ and $n$ also increase with the dimension. 

So far, we have discussed the minimax rate of optimality of the test based on $T_{n,m}^{\ell_2}$ and established its consistency for shrinking alternatives. One can show that the results similar to Theorems  \ref{minimax-lower-bound}
-\ref{High-Dimension-L2}
 hold even when the test is constructed based on other distance functions considered in Section 3. We state the result below as Theorem \ref{High-Dimenion-gdist}, but we skip the details of the proof since it is exactly the same as in the case of the test based on the $\ell_2$ distance.

\begin{thm}
Let $h,\psi: {\mathbb R}_+ \rightarrow{\mathbb R}_+$ be continuous and monotonically increasing functions with $h(0)=\psi(0)=0$. Assume that $n$ and $m$, the sample sizes $F$ and $G$, grow as a function of the dimension $d$ in such a way that $\lim_{d\to\infty}\Theta_{\varphi_{h,\psi}}^2(F,G)/\lambda({n,m}) = \infty$. Then for any fixed $\alpha$ $(0<\alpha<1)$, the power of the level $\alpha$ test based on $T_{n,m}^{h,\psi}$ converges to one as the dimension increases to infinity.
\label{High-Dimenion-gdist}
\end{thm}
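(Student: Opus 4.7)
The plan is to reduce Theorem \ref{High-Dimenion-gdist} to exactly the two ingredients that drive the proof of Theorem \ref{High-Dimension-L2}: (i) the distance-free upper bound $c_{1-\alpha} \le \tfrac{2}{3\alpha(\min\{n,m\}-2)} = O(\lambda(n,m))$ on the permutation cut-off from Lemma \ref{permutation-cut-off}, which does not care which metric was used to define $T_{n,m}^{h,\psi}$, and (ii) a concentration statement showing that $T_{n,m}^{h,\psi}$ estimates $\Theta_{\varphi_{h,\psi}}^2(F,G)$ up to an error of order $\lambda(n,m)$. Once (ii) is in hand, under the hypothesis $\Theta_{\varphi_{h,\psi}}^2(F,G)/\lambda(n,m)\to\infty$ the test statistic dominates the cut-off with probability tending to one, and the power converges to one.

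For (ii), I would condition on each pivot pair $({\bf X}_i,{\bf X}_j)$ (respectively $({\bf Y}_i,{\bf Y}_j)$) and recognise the inner bracket as $\hat F_{ij}(\widetilde{\mathbb B}_{ij}) - \hat G_{ij}(\widetilde{\mathbb B}_{ij})$, where $\widetilde{\mathbb B}_{ij}$ is the sub-level set of $\varphi_{h,\psi}(\cdot,{\bf X}_i)$ of radius $\varphi_{h,\psi}({\bf X}_j,{\bf X}_i)$. Since the kernel $\delta(\cdot,\cdot,\cdot)$ takes values in $\{0,1\}$ irrespective of which dissimilarity is used to form the ball, the conditional-moment calculation of the $\ell_2$ proof applies verbatim: the conditional mean is $F(\widetilde{\mathbb B}_{ij}) - G(\widetilde{\mathbb B}_{ij})$ and the conditional variance is at most $\tfrac14\{(n-2)^{-1} + m^{-1}\} = O(\lambda(n,m))$. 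Squaring, taking expectations and summing over $(i,j)$ then gives $E[T_{n,m}^{h,\psi}] = \Theta_{\varphi_{h,\psi}}^2(F,G) + O(\lambda(n,m))$, with the leading term precisely the population ball divergence associated with $\varphi_{h,\psi}$.

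A Hoeffding-type argument on the bounded indicators next yields $\mathrm{Var}(T_{n,m}^{h,\psi}) = O\bigl(\lambda(n,m)^2\bigr) + O\bigl(\lambda(n,m)\,\Theta_{\varphi_{h,\psi}}^2(F,G)\bigr)$. Under the growth hypothesis $\Theta_{\varphi_{h,\psi}}^2(F,G)/\lambda(n,m)\to\infty$ both of these are $o\bigl(\Theta_{\varphi_{h,\psi}}^4(F,G)\bigr)$, so Chebyshev's inequality gives $T_{n,m}^{h,\psi}/\Theta_{\varphi_{h,\psi}}^2(F,G) \to 1$ in probability. Combining with $c_{1-\alpha} = O(\lambda(n,m)) = o\bigl(\Theta_{\varphi_{h,\psi}}^2(F,G)\bigr)$ produces $\P(T_{n,m}^{h,\psi} > c_{1-\alpha}) \to 1$, which is the claimed consistency.

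The main obstacle I anticipate is the variance step: $T_{n,m}^{h,\psi}$ is a double sum of bounded but dependent terms with $O(n^2 m^2)$ covariance pairs, and these have to be sorted into diagonal and off-diagonal contributions, the latter controlled via a Hoeffding-type projection that factors the leading ball-probability differences out of each summand. The crucial point that lets the $\ell_2$ argument be copied wholesale is that none of these combinatorial estimates uses the triangle inequality or any Euclidean-specific property; they rely only on boundedness of the indicators and on $\widetilde{\mathbb B}_{ij}$ being a well-defined sub-level set of $\varphi_{h,\psi}(\cdot,{\bf X}_i)$, which is immediate from monotonicity of $h$ and $\psi$ together with $h(0) = \psi(0) = 0$. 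Consequently the proof of Theorem \ref{High-Dimenion-gdist} is a true mirror of that of Theorem \ref{High-Dimension-L2}, as the authors indicate.
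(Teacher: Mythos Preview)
Your proposal is correct and follows essentially the same route as the paper. The paper's proof simply states that the arguments of Theorems \ref{minimax-lower-bound} and \ref{minimax-upper-bound} carry over verbatim to $\varphi_{h,\psi}$, which is precisely what you have unpacked: Lemmas A.1--A.3 and Lemma \ref{permutation-cut-off} are all formulated for a generic dissimilarity $\rho$ via the bounded indicators $\delta(\cdot,\cdot,\cdot)$, so the expectation identity, the variance bound $\mathrm{Var}(T_{n,m}^{h,\psi}) \le C_1\Theta_{\varphi_{h,\psi}}^2(F,G)\lambda(n,m)+C_2\lambda(n,m)^2$, and the permutation cut-off bound transfer unchanged, and the Chebyshev step from the proof of Theorem \ref{minimax-upper-bound} then gives $\P(T_{n,m}^{h,\psi}\le c_{1-\alpha})\to 0$.
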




\begin{rem}
In the HDLSS setup, we need Assumptions (A1)-(A4) for the consistency of the tests. But when $n$ and $m$ grow with $d$, we do not need such assumptions. Also, unlike the HDLSS setup, here we have consistency for the test based on the $\ell_p$-distance for all $p\ge 1$. 
\end{rem}


\begin{rem}
    Theorems 4.3 and 4.4 do not tell us anything about the asymptotic behavior of the tests when $\lim_{d\to\infty}\Theta_{\ell_2}^2(F,G)/\lambda({n,m}) = c$ $($ or $\lim_{d\to\infty}\Theta_{h,\psi}^2(F,G)/\lambda({n,m}) = c)$ for some $c\in(0,\infty)$. However, in such cases, one can show that the asymptotic power of the test has a lower bound  $1 -({C_1 c+C_2})/{\left(c-\frac{1}{3\alpha}\right)^2},$
    where $C_1$ and $C_2$ are two universal constants (see the proof of Theorem~\ref{minimax-upper-bound}).
\end{rem}


Now, consider an example involving two multivariate distributions  $F = \prod_{i=1}^d \mathcal{N}_1(1/d^\beta,1)$ and $G = \prod_{i=1}^d \mathcal{N}_1(-1/d^\beta,1)$, where $\beta$ is a positive constant. Note that as $d$ grows to infinity, here we have $\nu^2+(\sigma_{F}-\sigma_{G})^2=0$ and $\lim_{d\to\infty} e_{h,\psi}(F,G) = 0$ for all $h$ and $\psi$ considered in this article. So, the conditions for the HDLSS consistency of the tests are not satisfied. Now, we study the behavior of the test based on $T_{n,m}^{\ell_2}$ when the sample sizes increase with the dimension at the rate $O(d^{\gamma})$ for some $\gamma>0$. We find out the relation between  $\gamma$ and $\beta$ that leads to the consistency of the test. Our findings are summarized in the following proposition.


\begin{prop}
Let $n$ and $m$ be the sample sizes from $F = \prod_{i=1}^d \mathcal{N}_1(1/d^\beta,1)$ and $G = \prod_{i=1}^d \mathcal{N}_1(-1/d^\beta,1)$, respectively. If $\beta>0$ and $n\asymp m\asymp d^{\gamma}$ for some $\gamma>0$, then, for the ball divergence test based on $T_{n,m}^{\ell_2}$, we have the following results.
\begin{itemize}
    \item[(a)] If $\beta\leq 1/4$, for any $\gamma>0$, the test is  consistent $($since $\Theta^2_{\ell_2}(F,G)>0)$.
    
    \item[(b)] If $1/4<\beta\leq 1/2$, the test is consistent if $\gamma>4\beta-1$.
    
    \item[(c)] If $\beta>1/2$ and $\gamma<2\beta-1$ there exist no level $\alpha$ $(0<\alpha<1)$ tests with asymptotic power more than the nominal level $\alpha$. For $\gamma>2\beta$, we have $\lim_{d\to\infty}\Theta_{\ell_2}^2(F,G)/\rho_{n,m} = \infty$, and hence our proposed test is consistent.
    \label{NSA-prop}
\end{itemize}
\end{prop}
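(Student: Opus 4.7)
The plan is to reduce each case of Proposition~\ref{NSA-prop} to either an application of Theorem~\ref{High-Dimension-L2} (for the consistency assertions) or a two-point Le Cam impossibility argument (for the first half of (c)). The core technical input is the asymptotic order of $\Theta_{\ell_2}^2(F,G)$ as a function of $\beta$ for the specific product-Gaussian alternatives at hand; once this is in hand, all three cases follow by comparing $\Theta_{\ell_2}^2(F,G)$ to $\lambda({n,m})\asymp d^{-\gamma}$.

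To compute $\Theta_{\ell_2}^2(F,G)$, I would fix $u\in{\mathbb R}^d$ and observe that under $X\sim F$ (respectively $Y\sim G$), $\|X-u\|^2$ (respectively $\|Y-u\|^2$) is a noncentral $\chi^2_d$ with noncentrality $\lambda_F(u)=\|\mu_F-u\|^2$ (respectively $\lambda_G(u)=\|\mu_G-u\|^2$). A Gaussian approximation valid for large $d$ yields
\[
F({\mathbb B}(u,r))-G({\mathbb B}(u,r))\;\approx\;\phi(z_r)\,\frac{\lambda_G(u)-\lambda_F(u)}{\sqrt{2d+4\lambda_F(u)}},
\]
where $z_r$ is the standardised value of $r^2$ under the $\|X-u\|^2$-distribution. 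Writing $u=\mu_F+Z$ with $Z\sim\mathcal{N}_d(0,{\bf I}_d)$ and $\Delta=\mu_F-\mu_G=(2/d^\beta){\bf 1}_d$, a direct computation gives $\lambda_G(u)-\lambda_F(u)=\|\Delta\|^2+2\Delta^\top Z$, which has mean $\|\Delta\|^2=4d^{1-2\beta}$ and variance $4\|\Delta\|^2=16d^{1-2\beta}$. Squaring, integrating over $u\sim F$ and $v\sim F$, and symmetrising with the $u,v\sim G$ piece of $\Theta_{\ell_2}^2$, I get
\[
\Theta_{\ell_2}^2(F,G)\;\asymp\;\frac{\|\Delta\|^4+\|\Delta\|^2}{d}\;\asymp\;d^{1-4\beta}+d^{-2\beta},
\]
capped at a constant because $|F({\mathbb B})-G({\mathbb B})|\le 1$. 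Hence (i) $\Theta_{\ell_2}^2\asymp 1$ for $\beta\le 1/4$, (ii) $\Theta_{\ell_2}^2\asymp d^{1-4\beta}$ for $1/4<\beta\le 1/2$, and (iii) $\Theta_{\ell_2}^2\asymp d^{-2\beta}$ for $\beta>1/2$. Substituting each of these into Theorem~\ref{High-Dimension-L2} immediately produces the consistency assertions in (a), in (b) under $\gamma>4\beta-1$, and in the second half of (c) under $\gamma>2\beta$.

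For the impossibility assertion in the first half of (c), I would use a single-hypothesis Le Cam argument. Take $Q_0=F^{\otimes n}\otimes F^{\otimes m}$ as a null distribution and $Q_1=F^{\otimes n}\otimes G^{\otimes m}$ as the alternative. Any level-$\alpha$ test $\phi$ satisfies ${\mathbb E}_{Q_1}[\phi]\le\alpha+\mathrm{TV}(Q_0,Q_1)=\alpha+\mathrm{TV}(F^{\otimes m},G^{\otimes m})$. Since $F$ and $G$ are Gaussians with a common covariance, $\mathrm{KL}(F,G)=\tfrac12\|\mu_F-\mu_G\|^2=2d^{1-2\beta}$; Pinsker's inequality then gives $\mathrm{TV}(F^{\otimes m},G^{\otimes m})\le\sqrt{\tfrac12\,m\,\mathrm{KL}(F,G)}\asymp d^{(\gamma+1-2\beta)/2}$, which tends to $0$ whenever $\gamma<2\beta-1$. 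Therefore the asymptotic power of any level-$\alpha$ test cannot exceed $\alpha$ in this regime.

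The main obstacle is the two-regime computation that identifies which of $d^{1-4\beta}$ and $d^{-2\beta}$ dominates $\Theta_{\ell_2}^2$. The delicate point is that when $\beta>1/2$ the deterministic mean-shift $\|\Delta\|^2=4d^{1-2\beta}$ is of smaller order than the standard deviation $4d^{(1-2\beta)/2}$ of the fluctuation $2\Delta^\top Z$, so the dominant contribution to $\Theta_{\ell_2}^2$ comes from the \emph{variance} of $\lambda_G(u)-\lambda_F(u)$ rather than from its mean---a subtlety that also explains why the threshold in (c) is $2\beta$ rather than the $4\beta-1$ suggested by extrapolating (b). Justifying the Gaussian approximation to the noncentral $\chi^2$ distribution uniformly in the relevant range of $(u,r)$, and ensuring that no finer-order corrections alter the scaling, is where most of the technical care is required.
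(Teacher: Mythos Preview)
Your route is genuinely different from the paper's and, for part~(c), is in fact the one that is needed. The paper works entirely through the Jensen lower bound $\Theta_{\ell_2}^2\ge\bigl(\P(\|{\bf X}_1-{\bf Y}_1\|\le\|{\bf Y}_2-{\bf Y}_1\|)-\tfrac12\bigr)^2+\text{(symmetric term)}$ (their Lemma~A.5), rewrites that probability exactly as $\E[\Phi(-Z(\beta))]$ by conditioning on one factor of $\sum_i T_iS_i$, and then tracks the size of $Z(\beta)$; the impossibility half of~(c) via Pinsker is essentially the same in both arguments. For~(a) and~(b) the Jensen bound and your direct noncentral-$\chi^2$ computation deliver the identical rate $d^{1-4\beta}$ (capped at a constant), so either proof works. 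For $\beta>1/2$, however, the Jensen bound averages \emph{first} and then squares: one checks that $Z(\beta)$ is approximately $N(c\,d^{1/2-2\beta},\,c'd^{-2\beta})$, whence $\E[\Phi(-Z(\beta))]-\tfrac12=\Phi\bigl(-c\,d^{1/2-2\beta}/\sqrt{1+c'd^{-2\beta}}\bigr)-\tfrac12\asymp d^{1/2-2\beta}$, and squaring gives only $d^{1-4\beta}$. Thus Lemma~A.5 alone yields the sufficient condition $\gamma>4\beta-1$, not the sharper $\gamma>2\beta$ asserted in~(c). Your decomposition $\lambda_G(u)-\lambda_F(u)=\|\Delta\|^2+2\Delta^\top Z$ and the observation that its \emph{variance} $4\|\Delta\|^2$ dominates the squared mean once $\beta>1/2$ is precisely what recovers the $d^{-2\beta}$ rate, because you square before averaging---exactly the step the Jensen route discards. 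The price is the uniform Gaussian approximation you flag, but a Berry--Esseen bound for $\chi^2_d(\lambda)$ with $\lambda=O(d)$ is standard and suffices here.
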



Proposition \ref{NSA-prop}{(c)} says that if $\beta>1/2$, for the consistency of the test, one needs to increase the sample size at a rate faster than $O(d^{2\beta-1})$. Therefore, the HDLSS consistency is not possible in this case. Recall that in Example 5, we have $\beta = 1/2$. So, if we increase the sample sizes at a rate faster than $O(d)$, our test will be consistent. We observed the same in our numerical study.

 For this study, we used 3 different choices of $\beta$ ($\beta=0.2, 0.3$ and $0.5$), and in each case, 7 different choices of $\gamma$ ($\gamma=0, 0.4, 0.5, 0.6,0.9. 1$ and $1.1$) and 10 different values of $d$ ($d=2^i$ for $i=1,\ldots,10$) were considered. We took $n=m=5+ \lfloor d^\gamma \rfloor$ to ensure $n,m \ge 5$, and each experiment was repeated $500$
times to compute the power of the test based on $T_{n,m}^{\ell_2}$. The results are reported in Figure \ref{fig:Example}.
In this example, for higher values of $\beta$, $\Theta_{\ell_2}^2(F, G)$ converges to zero at a faster rate. Therefore, to discriminate between $F$ and $G$, we need to increase the sample sizes at a higher rate as well. Figure \ref{fig:Example} shows that for higher values of $\beta$, the tests corresponding to lower values of $\gamma$ performed poorly. Note that here $\gamma = 0$ represents the HDLSS scenario. We can see that for $\beta = 0.2$,
even for $m=n=6$ (i.e., $\gamma=0$), the power of our test converged to $1$ in high dimensions. This was expected in view of Proposition~\ref{NSA-prop}(a). As expected, the test had higher power for larger values of $\gamma$. For $\beta=0.3$ and $0.5$, it did not work well in the HDLSS setup, but when $m$ and $n$ increased with $d$ at an appropriate rate, the power of the test converged to unity as we would expect in view of Proposition~\ref{NSA-prop}(b)-(c).  

\begin{figure}[h]
    \centering
    \begin{tikzpicture}
    \begin{axis}[xmin = 1, xmax = 10, xlabel = {$\log_2(d)$}, ylabel = {Power Estimates}, title = {\bf $\beta = 0.2$}]
\addplot[color = red, mark = *, step = 1cm,very thin]coordinates{(1,0.848)(2,0.93)(3,0.972)(4,0.986)(5,0.998)(6,1)(7,1)(8,1)(9,1)(10,1)};

\addplot[color = purple, mark = star, step = 1cm,very thin]coordinates{(1,0.848)(2,0.93)(3,0.988)(4,0.998)(5,1)(6,1)(7,1)(8,1)(9,1)(10,1)};

\addplot[color = magenta, mark = diamond*, step = 1cm,very thin]coordinates{(1,0.848)(2,0.956)(3,0.988)(4,1)(5,1)(6,1)(7,1)(8,1)(9,1)(10,1)};

\addplot[color = green, mark = x, step = 1cm,very thin]coordinates{(1,0.848)(2,0.956)(3,0.996)(4,1)(5,1)(6,1)(7,1)(8,1)(9,1)(10,1)};

\addplot[color = blue,  mark = triangle*, step = 1cm,very thin]coordinates{(1,0.818)(2,0.982)(3,1)(4,1)(5,1)(6,1)(7,1)(8,1)(9,1)(10,1)};

\addplot[color = orange, mark = square*, step = 1cm,very thin]coordinates{(1,0.914)(2,0.994)(3,1)(4,1)(5,1)(6,1)(7,1)(8,1)(9,1)(10,1)};

\addplot[color = teal, mark = +, step = 1cm,very thin]coordinates{(1,0.914)(2,0.994)(3,1)(4,1)(5,1)(6,1)(7,1)(8,1)(9,1)(10,1)};

    \end{axis}
    \end{tikzpicture}
    \begin{tikzpicture}
    \begin{axis}[xmin = 1, xmax = 10, xlabel = {$\log_2(d)$}, ylabel = {Power Estimates}, title = {\bf $\beta = 0.3$}]
\addplot[color = red, mark = *, step = 1cm,very thin]coordinates{(1,0.78)(2,0.822)(3,0.838)(4,0.832)(5,0.82)(6,0.8)(7,0.796)(8,0.746)(9,0.666)(10,0.62)};

\addplot[color = purple, mark = star, step = 1cm,very thin]coordinates{(1,0.78)(2,0.822)(3,0.918)(4,0.956)(5,0.982)(6,0.984)(7,0.998)(8,1)(9,1)(10,1)};

\addplot[color = magenta, mark = diamond*, step = 1cm,very thin]coordinates{(1,0.78)(2,0.882)(3,0.918)(4,0.982)(5,0.994)(6,1)(7,1)(8,1)(9,1)(10,1)};

\addplot[color = green, mark = x, step = 1cm,very thin]coordinates{(1,0.78)(2,0.882)(3,0.958)(4,0.992)(5,1)(6,1)(7,1)(8,1)(9,1)(10,1)};

\addplot[color = blue,  mark = triangle*, step = 1cm,very thin]coordinates{(1,0.78)(2,0.944)(3,0.994)(4,1)(5,1)(6,1)(7,1)(8,1)(9,1)(10,1)};

\addplot[color = orange, mark = square*, step = 1cm,very thin]coordinates{(1,0.872)(2,0.968)(3,0.998)(4,1)(5,1)(6,1)(7,1)(8,1)(9,1)(10,1)};

\addplot[color = teal, mark = +, step = 1cm,very thin]coordinates{(1,0.872)(2,0.968)(3,1)(4,1)(5,1)(6,1)(7,1)(8,1)(9,1)(10,1)};

    \end{axis}
    \end{tikzpicture}
    \begin{tikzpicture}
     \begin{axis}[xmin = 1, xmax = 10, xlabel = {$\log_2(d)$}, ylabel = {Power Estimates}, title = {\bf $\beta = 0.5$}]
\addplot[color = red, mark = *, step = 1cm,very thin]coordinates{(1,0.682)(2,0.556)(3,0.348)(4,0.202)(5,0.136)(6,0.09)(7,0.09)(8,0.05)(9,0.054)(10,0.05)};

\addplot[color = purple, mark = star, step = 1cm,very thin]coordinates{(1,0.682)(2,0.556)(3,0.438)(4,0.33)(5,0.206)(6,0.144)(7,0.1)(8,0.062)(9,0.082)(10,0.058)};

\addplot[color = magenta, mark = diamond*, step = 1cm,very thin]coordinates{(1,0.682)(2,0.614)(3,0.438)(4,0.39)(5,0.234)(6,0.2)(7,0.122)(8,0.09)(9,0.078)(10,0.062)};

\addplot[color = green, mark = x, step = 1cm,very thin]coordinates{(1,0.682)(2,0.614)(3,0.526)(4,0.418)(5,0.338)(6,0.268)(7,0.15)(8,0.118)(9,0.094)(10,0.1)};

\addplot[color = blue,  mark = triangle*, step = 1cm,very thin]coordinates{(1,0.682)(2,0.714)(3,0.724)(4,0.728)(5,0.8)(6,0.83)(7,0.836)(8,0.834)(9,0.837)(10,0.836)};

\addplot[color = orange, mark = square*, step = 1cm,very thin]coordinates{(1,0.76)(2,0.768)(3,0.838)(4,0.876)(5,0.942)(6,0.992)(7,0.998)(8,1)(9,1)(10,1)};

\addplot[color = teal, mark = +, step = 1cm,very thin]coordinates{(1,0.76)(2,0.768)(3,0.864)(4,0.96)(5,0.994)(6,1)(7,1)(8,1)(9,1)(10,1)};

    \end{axis}
    \end{tikzpicture}

    \caption{Powers of the BD-$\ell_2$ test for different choice of $\beta$ (namely, $0.2,0.3$ and $0.5$) and $\gamma$ (namely, $0$ (\textcolor{red}{$\tikzcircle{2pt}$}), $0.4$ (\textcolor{purple}{$\star$}), $0.5$ (\textcolor{magenta}{$\blacklozenge$}), $0.6$ (\textcolor{green}{$\times$}), $0.9$ (\textcolor{blue}{$\blacktriangle$}), $1$ (\textcolor{orange}{$\blacksquare$}), $1.1$ (\textcolor{teal}{$+$})).}
    \label{fig:Example}
\end{figure}

\section{Empirical performance of the proposed tests}

In this section, we investigate the empirical performance of our tests for high-dimensional data. First, we study their level properties and then compare their powers with some popular tests available in the literature. For this comparison, we consider the multivariate run tests based on minimum spanning tree  \citep{friedman1979multivariate}
and shortest Hamiltonian path \citep{biswas2014distribution}, the tests based on averages of inter-point distances proposed by \cite{baringhaus2004new} and \cite{biswas2014nonparametric}, the nearest neighbor test \citep{schilling1986multivariate, henze1988multivariate}, 
and the test based on maximum mean discrepancy \citep{gretton2012kernel}. Henceforth, we shall refer to them as the FR test, the SHP test, the BF test, the BG test, the NN test, and the MMD test, respectively. For the NN test, we consider the test based on $3$ neighbors, which has been reported to perform well in the literature \citep[see, e.g.,][]{schilling1986multivariate}. Throughout this article, all tests are considered to have the 5\% nominal level. The SHP test has the distribution-free property. For all other tests, the cut-off is computed based on 500 random permutations as before. 

\subsection{Analysis of simulated data sets}
\label{Synthetic Data Analysis}

To study the level properties of our tests, we generated two sets of independent observations from the $d$-variate standard normal distribution
and used them as observations from $F$ and $G$, respectively. This experiment was repeated 500 times, and for each test, we
computed the proportion of times it rejected $H_0$. We carried out our experiment for different sample sizes ($n=m=20$, $35$ and $50$) and dimension ($d=2^{i}$ for $i=1,2,\ldots,10$). Figure 4 clearly shows that on all occasions, the BD-${\ell_2}$ rejected $H_0$ in nearly $5\%$ of the cases. BD-${\ell_1}$, BD-${\exp}$ and BD-${\log}$ also exhibited similar level properties. To avoid repetition, they are not reported here. We also observed almost similar results when the normal distribution was replaced by other multivariate distributions. But, to save space, we decided not to report them.

\begin{figure}[!h]
\centering
\begin{tikzpicture}


\begin{axis}[xmin = 0.9, xmax = 10.1, ymin = 0, ymax = 0.11, xlabel = {$\log_2(d)$}, ylabel = {Estimates}, title = {$\alpha=0.05$}]
\addplot[color = red,   mark = *, step = 1cm,very thin]coordinates{(1,0.062)(2,0.037)(3,0.041)(4,0.049)(5,0.053)(6,0.052)(7,0.059)(8,0.045)(9,0.055)(10,0.057)};

\addplot[color = violet, mark = *, step = 1cm,very thin]coordinates{(1,0.044)(2,0.047)(3,0.044)(4,0.038)(5,0.056)(6,0.05)(7,0.052)(8,0.066)(9,0.05)(10,0.049)};

\addplot[color = purple, mark = *, step = 1cm,very thin]coordinates{(1,0.054)(2,0.048)(3,0.040)(4,0.050)(5,0.042)(6,0.052)(7,0.05)(8,0.061)(9,0.041)(10,0.055)};

\end{axis}
\end{tikzpicture}





\caption{Observed levels of the BD-${\ell_2}$ test for $n=m=20$ (\textcolor{red}{$\tikzcircle{2pt}$}), $n=m=35$ ({$\tikzcirclev{2pt}$}) and $n=m=50$ ({$\tikzcirclep{2pt}$}).}
   \label{level}
\end{figure}

Next, we investigate the power properties of the proposed tests and compare their performance with some of the existing methods. We consider two types of examples
for this purpose. In Section~\ref{HDLSS Scenario}, we deal with examples with fixed sample sizes and study the performance of different tests as the dimension increases. In Section \ref{Shrinking Alternative Scenario}, we consider the situations, where the conditions for HDLSS consistency of the proposed tests do not hold  (i.e., we have $\nu^2+(\sigma_F-\sigma_G)^2=0$ and $\lim e_{h,\psi}(F,G) = 0$). In such cases, we investigate the performance of different tests when the sample sizes grow with the dimension.

\subsubsection{Dimension increases when the sample sizes remain fixed}
\label{HDLSS Scenario}

We begin with the four examples (Examples 1-4) discussed in Sections 3.1 and 3.2. The powers of the proposed tests and those of their competitors are reported in Figure \ref{Sim1}.

\begin{figure}[!h]
    \centering
    \begin{tikzpicture}
\begin{axis}[xmin = 1, xmax = 10, xlabel = {$\log_2(d)$}, ylabel = {Power Estimates}, title = {\bf Example 1}]
\addplot[color = red,   mark = *, step = 1cm,very thin]coordinates{(1,0.098)(2,0.09)(3,0.088)(4,0.116)(5,0.128)(6,0.154)(7,0.202)(8,0.356)(9,0.696)(10,0.99)};

\addplot[color = violet, mark = *, step = 1cm,very thin]coordinates{(1,0.104)(2,0.092)(3,0.104)(4,0.12)(5,0.146)(6,0.164)(7,0.202)(8,0.39)(9,0.672)(10,0.99)};

\addplot[color = purple, mark = *, step = 1cm,very thin]coordinates{(1,0.096)(2,0.084)(3,0.112)(4,0.122)(5,0.144)(6,0.164)(7,0.204)(8,0.384)(9,0.62)(10,0.956)};

\addplot[color = magenta, mark = *, step = 1cm,very thin]coordinates{(1,0.102)(2,0.088)(3,0.104)(4,0.124)(5,0.148)(6,0.168)(7,0.216)(8,0.382)(9,0.68)(10,0.98)};

\addplot[color = green, mark = square*, step = 1cm,very thin]coordinates{(1,0.072)(2,0.1)(3,0.098)(4,0.122)(5,0.17)(6,0.164)(7,0.366)(8,0.53)(9,0.786)(10,0.958)};

\addplot[color = blue,  mark = diamond*, step = 1cm,very thin]coordinates{(1,0.14)(2,0.15)(3,0.254)(4,0.37)(5,0.582)(6,0.84)(7,0.968)(8,0.998)(9,1)(10,1)};

\addplot[color = orange, mark = square*, step = 1cm,very thin]coordinates{(1,0.086)(2,0.092)(3,0.09)(4,0.138)(5,0.204)(6,0.306)(7,0.408)(8,0.702)(9,0.93)(10,0.998)};

\addplot[color = teal, mark = diamond*, step = 1cm,very thin]coordinates{(1,0.116)(2,0.140)(3,0.23)(4,0.35)(5,0.57)(6,0.834)(7,0.968)(8,1)(9,1)(10,1)};

\addplot[color = yellow, mark = square*, step = 1cm,very thin]coordinates{(1,0.064)(2,0.076)(3,0.084)(4,0.092)(5,0.130)(6,0.23)(7,0.28)(8,0.512)(9,0.758)(10,0.962)};

\addplot[color = black, mark = diamond*, step = 1cm,very thin]coordinates{(1,0.052)(2,0.034)(3,0.034)(4,0.056)(5,0.034)(6,0.048)(7,0.076)(8,0.182)(9,0.402)(10,0.826)};
\end{axis}
\end{tikzpicture}
\begin{tikzpicture}
\begin{axis}[xmin = 1, xmax = 10, xlabel = {$\log_2(d)$}, ylabel = {Power Estimates}, title = {\bf Example 2}]

\addplot[color = red,   mark = *, step = 1cm,very thin]coordinates{(1,0.062)(2,0.092)(3,0.122)(4,0.23)(5,0.436)(6,0.736)(7,0.96)(8,1)(9,1)(10,1)};

\addplot[color = violet, mark = *, step = 1cm,very thin]coordinates{(1,0.066)(2,0.088)(3,0.114)(4,0.224)(5,0.422)(6,0.722)(7,0.958)(8,1)(9,1)(10,1)};

\addplot[color = purple, mark = *, step = 1cm,very thin]coordinates{(1,0.062)(2,0.08)(3,0.118)(4,0.22)(5,0.412)(6,0.708)(7,0.954)(8,1)(9,1)(10,1)};

\addplot[color = magenta, mark = *, step = 1cm,very thin]coordinates{(1,0.066)(2,0.086)(3,0.118)(4,0.22)(5,0.412)(6,0.708)(7,0.954)(8,1)(9,1)(10,1)};

\addplot[color = green, mark = square*, step = 1cm,very thin]coordinates{(1,0.042)(2,0.09)(3,0.06)(4,0.074)(5,0.068)(6,0.064)(7,0.052)(8,0.06)(9,0.02)(10,0.004)};

\addplot[color = blue,  mark = diamond*, step = 1cm,very thin]coordinates{(1,0.052)(2,0.044)(3,0.042)(4,0.056)(5,0.044)(6,0.02)(7,0.074)(8,0.088)(9,0.128)(10,0.156)};

\addplot[color = orange, mark = square*, step = 1cm,very thin]coordinates{(1,0.066)(2,0.086)(3,0.052)(4,0.04)(5,0.058)(6,0.064)(7,0.052)(8,0.046)(9,0.02)(10,0.002)};

\addplot[color = teal, mark = diamond*, step = 1cm,very thin]coordinates{(1,0.06)(2,0.058)(3,0.056)(4,0.06)(5,0.06)(6,0.092)(7,0.118)(8,0.164)(9,0.248)(10,0.366)};

\addplot[color = yellow, mark = square*, step = 1cm,very thin]coordinates{(1,0.032)(2,0.052)(3,0.042)(4,0.056)(5,0.062)(6,0.09)(7,0.192)(8,0.426)(9,0.84)(10,1)};

\addplot[color = black, mark = diamond*, step = 1cm,very thin]coordinates{(1,0.062)(2,0.102)(3,0.148)(4,0.246)(5,0.454)(6,0.746)(7,0.966)(8,1)(9,1)(10,1)};

\end{axis}
\end{tikzpicture}

\begin{tikzpicture}
\begin{axis}[xmin = 1, xmax = 10, xlabel = {$\log_2(d)$}, ylabel = {Power Estimates}, title = {\bf Example 3}]

  \addplot[color = red,   mark = *, step = 1cm,very thin]coordinates{(1,0.074)(2,0.072)(3,0.034)(4,0.042)(5,0.034)(6,0.042)(7,0.044)(8,0.068)(9,0.04)(10,0.06)};

      \addplot[color = violet, mark = *, step = 1cm,very thin]coordinates{(1,0.082)(2,0.084)(3,0.094)(4,0.128)(5,0.208)(6,0.442)(7,0.876)(8,1)(9,1)(10,1)};

      \addplot[color = purple, mark = *, step = 1cm,very thin]coordinates{(1,0.212)(2,0.326)(3,0.548)(4,0.784)(5,0.954)(6,1)(7,1)(8,1)(9,1)(10,1)};

      \addplot[color = magenta, mark = *, step = 1cm,very thin]coordinates{(1,0.092)(2,0.112)(3,0.188)(4,0.288)(5,0.574)(6,0.918)(7,1)(8,1)(9,1)(10,1)};

\addplot[color = green, mark = square*, step = 1cm,very thin]coordinates{(1,0.242)(2,0.364)(3,0.458)(4,0.532)(5,0.542)(6,0.576)(7,0.56)(8,0.592)(9,0.544)(10,0.568)};

\addplot[color = blue,  mark = diamond*, step = 1cm,very thin]coordinates{(1,0.122)(2,0.088)(3,0.102)(4,0.08)(5,0.076)(6,0.074)(7,0.064)(8,0.06)(9,0.062)(10,0.058)};

\addplot[color = orange, mark = square*, step = 1cm,very thin]coordinates{(1,0.326)(2,0.482)(3,0.566)(4,0.628)(5,0.682)(6,0.646)(7,0.658)(8,0.694)(9,0.696)(10,0.69)};

\addplot[color = teal, mark = diamond*, step = 1cm,very thin]coordinates{(1,0.246)(2,0.22)(3,0.196)(4,0.172)(5,0.142)(6,0.096)(7,0.086)(8,0.078)(9,0.072)(10,0.064)};

\addplot[color = yellow, mark = square*, step = 1cm,very thin]coordinates{(1,0.18)(2,0.3)(3,0.408)(4,0.496)(5,0.526)(6,0.558)(7,0.552)(8,0.56)(9,0.576)(10,0.586)};

\addplot[color = black, mark = diamond*, step = 1cm,very thin]coordinates{(1,0.052)(2,0.044)(3,0.022)(4,0.036)(5,0.038)(6,0.042)(7,0.052)(8,0.062)(9,0.044)(10,0.064)};

\end{axis}
\end{tikzpicture}
\begin{tikzpicture}
\begin{axis}[xmin = 1, xmax = 10, xlabel = {$\log_2(d)$}, ylabel = {Power Estimates}, title = {\bf Example 4}]

\addplot[color = red, mark = *, step = 1cm,very thin]coordinates{(1,0.73)(2,0.368)(3,0.142)(4,0.084)(5,0.062)(6,0.066)(7,0.034)(8,0.05)(9,0.034)(10,0.056)};

\addplot[color = violet, mark = *, step = 1cm,very thin]coordinates{(1,0.622)(2,0.242)(3,0.108)(4,0.058)(5,0.068)(6,0.08)(7,0.03)(8,0.062)(9,0.056)(10,0.052)};

\addplot[color = purple, mark = *, step = 1cm,very thin]coordinates{(1,0.938)(2,0.99)(3,1)(4,1)(5,1)(6,1)(7,1)(8,1)(9,1)(10,1)};

\addplot[color = magenta, mark = *, step = 1cm,very thin]coordinates{(1,0.676)(2,0.452)(3,0.43)(4,0.614)(5,0.928)(6,1)(7,1)(8,1)(9,1)(10,1)};

\addplot[color = green, mark = square*, step = 1cm,very thin]coordinates{(1,0.516)(2,0.65)(3,0.566)(4,0.466)(5,0.308)(6,0.234)(7,0.176)(8,0.12)(9,0.106)(10,0.116)};

\addplot[color = blue,  mark = diamond*, step = 1cm,very thin]coordinates{(1,0.46)(2,0.328)(3,0.208)(4,0.106)(5,0.102)(6,0.066)(7,0.054)(8,0.07)(9,0.042)(10,0.042)};

\addplot[color = orange, mark = square*, step = 1cm,very thin]coordinates{(1,0.668)(2,0.754)(3,0.71)(4,0.582)(5,0.416)(6,0.21)(7,0.086)(8,0.04)(9,0.012)(10,0)};

\addplot[color = teal, mark = diamond*, step = 1cm,very thin]coordinates{(1,0.794)(2,0.676)(3,0.424)(4,0.19)(5,0.112)(6,0.074)(7,0.038)(8,0.054)(9,0.032)(10,0.052)};

\addplot[color = yellow, mark = square*, step = 1cm,very thin]coordinates{(1,0.396)(2,0.454)(3,0.372)(4,0.244)(5,0.164)(6,0.118)(7,0.088)(8,0.07)(9,0.046)(10,0.06)};

\addplot[color = black, mark = diamond*, step = 1cm,very thin]coordinates{(1,0.062)(2,0.05)(3,0.04)(4,0.036)(5,0.07)(6,0.056)(7,0.058)(8,0.07)(9,0.036)(10,0.038)};

\end{axis}
\end{tikzpicture}
\caption{Powers of  BD-${\ell_2}$ (\textcolor{red}{$\tikzcircle{2pt}$}), BD-${\ell_1}$ ({$\tikzcirclev{2pt}$}), BD-${\exp}$ ({$\tikzcirclep{2pt}$}), BD-${\log}$ ({$\tikzcirclem{2pt}$}), FR (\textcolor{green}{$\blacksquare$}), BF (\textcolor{blue}{$\blacklozenge$}), NN (\textcolor{orange}{$\blacksquare$}), MMD (\textcolor{teal}{$\blacklozenge$}), SHP (\textcolor{yellow}{$\blacksquare$}), BG (\textcolor{black}{$\blacklozenge$}) tests in Examples 1-4.}
   \label{Sim1}
\end{figure}

In the location problem in Example 1, BF and MMD tests outperformed all other tests considered here. However, the powers of the proposed ball divergence tests were comparable to the rest (BG, NN, FR, and SHP tests).

In Example 2, all tests based on ball divergence and the BG test had similar performance, and they performed much better than their competitors. Among the rest, the SHP test had a relatively higher power. One can notice that in high dimensions, FR and NN tests had powers close to $0$. This matches with the findings of \cite{biswas2014distribution} and \cite{mondal2015high}, where the authors explained the reasons for the poor performance of FR and NN tests in high dimensional scale problems.  

We have seen that in Example-3, $\nu^2+(\sigma_F-\sigma_G)^2=0$ but $\liminf_{d\to\infty} e_{h,\psi}(F,G)>0$ for $\psi=\psi_1,\psi_2,\psi_3$ with $h(t)=t$. So, as expected, in this example, BD-${\ell_2}$ did not have satisfactory performance, but BD-${\ell_1}$, BD-${\exp}$ and BD-${\log}$ performed well in high dimensions. Among them, BD-${\exp}$ had a clear edge. While the powers of these three tests increased with the dimension, that was not the case for 
other competing methods. Note that these competing methods are based on $\ell_2$ distances. The use of a different distance function may improve their performance. 

In the presence of heavy-tailed distributions in Example 4, all tests except BD-${\exp}$ and BD-${\log}$ had poor performance in high dimensions. Among these two tests, the one based on the bounded $\psi$-function (i.e., $\psi_2(t) = 1-e^{-t/2}$) performed better. The reason for the excellent performance of these tests has already been discussed in Section 3.2. 

As we have discussed before, in Example 5, the power of any test based on fixed sample sizes is expected to decrease as the dimension increases. We also observed the same for all tests considered here. So, we do not report those results. Instead, we consider three other examples (Examples 6-8) to investigate the empirical performance of different tests.


\vspace{0.05in}
\textbf{Example 6}  $F$ is the $d$-variate standard normal distribution while $G$ is an equal mixture of  $\mathcal{N}_d(0.5\,{\bf 1}_d, {\bf I}_d)$ and $\mathcal{N}_d(-0.5\,{\bf 1}_d, {\bf I}_d)$.

\vspace{0.05in}
\textbf{Example 7}  $F$ is same as in Example 6, but $G$ is unequal mixture of $\mathcal{N}_d({\bf 1}_d, {\bf I}_d)$ and $\mathcal{N}_d(-0.25\,{\bf 1}_d, {\bf I}_d)$ with mixing proportions $0.2$ and $0.8$, respectively.

\vspace{0.05in}
\textbf{Example 8}  Both $F$ and $G$ have independent and identically distributed coordinate variables. The coordinate variables in $F$ follow ${\cal N}_1(0,2)$ distribution, while in $G$, they follow the standard $t$ distribution with $4$ degrees of freedom. 

\vspace{0.05in}
For each example, we generated $50$ observations from each of the two distributions and considered $10$ different choices of $d$ $(d=2^i$ for $i=1,2,\ldots,10$) as before. Each experiment was repeated 500 times to estimate the power of different tests by the proportion of times they rejected $H_0$. These estimated powers are shown in Figure \ref{Sim2}. This figure clearly shows that in the examples involving mixture normal distributions, the BG test, and the ball divergence tests performed better than their competitors. In Example 8, the ball divergence tests outperformed all other competing tests considered here. Like Example 3, here also, we have $\nu^2+(\sigma_F-\sigma_G)^2=0$ but  $\liminf_{d\to\infty}e_{h,\psi}(F,G)>0$ for other three choices of $\psi$. So, as expected, BD-${\ell_1}$, BD-${\exp}$ and BD-${\log}$ performed much better than BD-${\ell_2}$.

\begin{figure}[t]
    \centering
    \begin{tikzpicture}
    \begin{axis}[xmin = 1, xmax = 10, xlabel = {$\log_2(d)$}, ylabel = {Power Estimates}, title = {\bf Example 6}]
    \addplot[color = red, mark = *, step = 1cm,very thin]coordinates{(1,0.114)(2,0.258)(3,0.53)(4,0.858)(5,0.998)(6,1)(7,1)(8,1)(9,1)(10,1)};

\addplot[color = violet, mark = *, step = 1cm,very thin]coordinates{(1,0.12)(2,0.256)(3,0.524)(4,0.858)(5,0.998)(6,1)(7,1)(8,1)(9,1)(10,1)};

\addplot[color = purple, mark = *, step = 1cm,very thin]coordinates{(1,0.118)(2,0.242)(3,0.486)(4,0.858)(5,0.994)(6,1)(7,1)(8,1)(9,1)(10,1)};

\addplot[color = magenta, mark = *, step = 1cm,very thin]coordinates{(1,0.118)(2,0.264)(3,0.514)(4,0.856)(5,0.998)(6,1)(7,1)(8,1)(9,1)(10,1)};

\addplot[color = green, mark = square*, step = 1cm,very thin]coordinates{(1,0.086)(2,0.138)(3,0.258)(4,0.626)(5,0.952)(6,0.998)(7,1)(8,1)(9,1)(10,1)};

\addplot[color = blue,  mark = diamond*, step = 1cm,very thin]coordinates{(1,0.052)(2,0.082)(3,0.132)(4,0.2)(5,0.296)(6,0.392)(7,0.546)(8,0.616)(9,0.752)(10,0.752)};

\addplot[color = orange, mark = square*, step = 1cm,very thin]coordinates{(1,0.084)(2,0.134)(3,0.342)(4,0.798)(5,0.994)(6,1)(7,1)(8,1)(9,1)(10,1)};

\addplot[color = teal, mark = diamond*, step = 1cm,very thin]coordinates{(1,0.088)(2,0.162)(3,0.318)(4,0.53)(5,0.77)(6,0.976)(7,1)(8,1)(9,1)(10,1)};

\addplot[color = yellow, mark = square*, step = 1cm,very thin]coordinates{(1,0.078)(2,0.112)(3,0.222)(4,0.526)(5,0.91)(6,1)(7,1)(8,1)(9,1)(10,1)};

\addplot[color = black, mark = diamond*, step = 1cm,very thin]coordinates{(1,0.188)(2,0.35)(3,0.554)(4,0.862)(5,0.992)(6,1)(7,1)(8,1)(9,1)(10,1)};

    \end{axis}
    \end{tikzpicture}
    \begin{tikzpicture}
    \begin{axis}[xmin = 1, xmax = 10, xlabel = {$\log_2(d)$}, ylabel = {Power Estimates}, title = {\bf Example 7}]
    \addplot[color = red, mark = *, step = 1cm,very thin]coordinates{(1,0.114)(2,0.232)(3,0.442)(4,0.728)(5,0.918)(6,0.996)(7,1)(8,1)(9,1)(10,1)};

\addplot[color = violet, mark = *, step = 1cm,very thin]coordinates{(1,0.104)(2,0.23)(3,0.46)(4,0.736)(5,0.934)(6,0.998)(7,1)(8,1)(9,1)(10,1)};

\addplot[color = purple, mark = *, step = 1cm,very thin]coordinates{(1,0.102)(2,0.202)(3,0.42)(4,0.688)(5,0.914)(6,0.996)(7,1)(8,1)(9,1)(10,1)};

\addplot[color = magenta, mark = *, step = 1cm,very thin]coordinates{(1,0.106)(2,0.232)(3,0.458)(4,0.734)(5,0.932)(6,0.998)(7,1)(8,1)(9,1)(10,1)};

\addplot[color = green, mark = square*, step = 1cm,very thin]coordinates{(1,0.098)(2,0.132)(3,0.212)(4,0.464)(5,0.688)(6,0.924)(7,0.982)(8,1)(9,1)(10,1)};

\addplot[color = blue,  mark = diamond*, step = 1cm,very thin]coordinates{(1,0.054)(2,0.078)(3,0.122)(4,0.168)(5,0.234)(6,0.33)(7,0.426)(8,0.532)(9,0.568)(10,0.55)};

\addplot[color = orange, mark = square*, step = 1cm,very thin]coordinates{(1,0.08)(2,0.104)(3,0.28)(4,0.598)(5,0.878)(6,0.994)(7,1)(8,1)(9,1)(10,1)};

\addplot[color = teal, mark = diamond*, step = 1cm,very thin]coordinates{(1,0.084)(2,0.16)(3,0.266)(4,0.488)(5,0.748)(6,0.956)(7,0.996)(8,1)(9,1)(10,1)};

\addplot[color = yellow, mark = square*, step = 1cm,very thin]coordinates{(1,0.07)(2,0.102)(3,0.176)(4,0.378)(5,0.664)(6,0.908)(7,0.978)(8,1)(9,1)(10,1)};

\addplot[color = black, mark = diamond*, step = 1cm,very thin]coordinates{(1,0.172)(2,0.274)(3,0.49)(4,0.708)(5,0.874)(6,0.964)(7,0.982)(8,0.996)(9,1)(10,1)};

    \end{axis}
    \end{tikzpicture}
    \begin{tikzpicture}
    \begin{axis}[xmin = 1, xmax = 10, xlabel = {$\log_2(d)$}, ylabel = {Power Estimates}, title = {\bf Example 8}]
    \addplot[color = red, mark = *, step = 1cm,very thin]coordinates{(1,0.198)(2,0.252)(3,0.272)(4,0.296)(5,0.318)(6,0.344)(7,0.358)(8,0.348)(9,0.404)(10,0.414)};

\addplot[color = violet, mark = *, step = 1cm,very thin]coordinates{(1,0.19)(2,0.296)(3,0.398)(4,0.556)(5,0.971)(6,1)(7,1)(8,1)(9,1)(10,1)};

\addplot[color = purple, mark = *, step = 1cm,very thin]coordinates{(1,0.248)(2,0.464)(3,0.756)(4,0.956)(5,1)(6,1)(7,1)(8,1)(9,1)(10,1)};

\addplot[color = magenta, mark = *, step = 1cm,very thin]coordinates{(1,0.196)(2,0.342)(3,0.488)(4,0.772)(5,0.964)(6,1)(7,1)(8,1)(9,1)(10,1)};

\addplot[color = green, mark = square*, step = 1cm,very thin]coordinates{(1,0.098)(2,0.142)(3,0.128)(4,0.098)(5,0.07)(6,0.044)(7,0.032)(8,0.014)(9,0.024)(10,0.002)};

\addplot[color = blue,  mark = diamond*, step = 1cm,very thin]coordinates{(1,0.078)(2,0.064)(3,0.054)(4,0.042)(5,0.038)(6,0.042)(7,0.068)(8,0.044)(9,0.052)(10,0.036)};

\addplot[color = orange, mark = square*, step = 1cm,very thin]coordinates{(1,0.102)(2,0.12)(3,0.118)(4,0.104)(5,0.082)(6,0.04)(7,0.068)(8,0.04)(9,0.038)(10,0.02)};

\addplot[color = teal, mark = diamond*, step = 1cm,very thin]coordinates{(1,0.13)(2,0.12)(3,0.066)(4,0.048)(5,0.046)(6,0.042)(7,0.072)(8,0.052)(9,0.054)(10,0.028)};

\addplot[color = yellow, mark = square*, step = 1cm,very thin]coordinates{(1,0.088)(2,0.082)(3,0.11)(4,0.134)(5,0.158)(6,0.192)(7,0.168)(8,0.224)(9,0.274)(10,0.304)};

\addplot[color = black, mark = diamond*, step = 1cm,very thin]coordinates{(1,0.118)(2,0.132)(3,0.128)(4,0.088)(5,0.094)(6,0.116)(7,0.076)(8,0.092)(9,0.076)(10,0.054)};

    \end{axis}
    \end{tikzpicture}

\caption{Powers of  BD-${\ell_2}$ (\textcolor{red}{$\tikzcircle{2pt}$}), BD-${\ell_1}$ ({$\tikzcirclev{2pt}$}), BD-${\exp}$ ({$\tikzcirclep{2pt}$}), BD-${\log}$ ({$\tikzcirclem{2pt}$}), FR (\textcolor{green}{$\blacksquare$}), BF (\textcolor{blue}{$\blacklozenge$}), NN (\textcolor{orange}{$\blacksquare$}), MMD (\textcolor{teal}{$\blacklozenge$}), SHP (\textcolor{yellow}{$\blacksquare$}), BG (\textcolor{black}{$\blacklozenge$}) tests in Examples 6-8.}
    \label{Sim2}
\end{figure}


\subsubsection{Sample sizes grow with the dimension}
\label{Shrinking Alternative Scenario}

In this section, we deal with some examples, where we do not have a theoretical guarantee for the consistency of the ball divergence tests in the HDLSS regime, and investigate how the proposed tests and their competitors perform  when the sample sizes also grow with the dimension. As
before, the powers of all tests are computed based on 500 replications. 

We consider six examples (Examples 9-14) in this section. The first example is similar to Example 5. This example involving two normal distributions was discussed in Section 4.2. 

\vspace{0.05in}
{\bf Example 9} The coordinate variables in $F$ and $G$ are independent and identically as $\mathcal{N}_1(d^{-0.3},1)$ and $\mathcal{N}_1(-d^{-0.3},1)$, respectively. We consider sample sizes $n=m=5+\lfloor \sqrt{d}\rfloor$ that increase at $O(\sqrt{d})$ rate. 

\vspace{0.05in}
In this location problem, the BF test and the MMD test had the best performance (see Figure~\ref{Sim3})  closely followed by NN, BG, and ball divergence tests. The SHP test had relatively low power. Interestingly, the powers of all tests converged to unity as the dimension increased.

Next, we consider an example (Example 10), which is similar to Example 3. Recall that in Example 3, BD-${\ell_2}$ had poor performances in the HDLSS regime. Here we study its behavior when the sample sizes $m=n=d+5$ increase with the dimension. 

\vspace{0.05in}
\textbf{Example 10} Both $F = \mathcal{N}_d({\bf 0}_d,\sigmat_{1,d}^{\circ})$ and $G = \mathcal{N}_d({\bf 0}_d,\sigmat_{2,d}^{\circ})$ have the same mean ${\bf 0}_d$, but different diagonal dispersion matrices $\sigmat_{1,d}^{\circ}$ and $\sigmat_{2,d}^{\circ}$, respectively. The first $d/2$ diagonal elements of $\sigmat_{1,d}^{\circ}$  are 1, and the rest are 5. On the contrary, $\sigmat_{2,d}^{\circ}$ has the first $d/2$ diagonal elements equal to $5$ and rest equal to $1$.

\vspace{0.05in}
In this example,  NN, FR, and BD-${\exp}$ tests had better performance than their competitors, with BD-${\exp}$ having an edge (see Figure~\ref{Sim3}). Interestingly, the powers of all tests barring the BG test showed a tendency to converge to unity as the dimension increased. Note that this was not the case in Example 3 when samples of fixed sizes were used. 

Our next example deals with two multivariate normal distributions, which have the same mean and marginal variances but differ in their correlation structures. 

\vspace{0.05in}
\textbf{Example 11} $F= \mathcal{N}_d({\bf 0}_d,\sigmat_{1,d}^{\ast})$ and $G = \mathcal{N}_d({\bf 0}_d,\sigmat_{2,d}^{\ast})$ have the same mean ${\bf 0}_d$ but different dispersion matrices  $\sigmat_{1,d}^{\ast} = ((0.1^{|i-j|}))$ and $\sigmat_{2,d}^{\ast} = ((0.5^{|i-j|}))$, respectively.


\vspace{0.05in}
\cite{sarkar2018some} observed that in this example BG and BF tests do not perform well in HDLSS situations. We observed the same in our numerical study. In fact, all tests considered here had poor performance in the HDLSS setup. Note that here we have $\nu^2+(\sigma_F-\sigma_G)^2=0$ and $\lim_{d \rightarrow \infty} e_{h,\phi}(F,G)=0$ for all three choices of $\psi$ (i.e., $\psi_1$, $\psi_2$ and $\psi_3$). So, the poor performance of the ball divergence tests in the HDLSS setup was quite expected. Figure \ref{Sim3} shows the powers of different tests when the sample sizes $n=m=d+5$ increase with the dimension. Here the graph-based tests performed much better than the average distance-based tests. However, unlike BG, BF, and MMD tests, the powers of the ball divergence tests had a sharp rise in higher dimensions. 

\begin{figure}[!h]
    \centering
    \begin{tikzpicture}
    \begin{axis}[xmin = 1, xmax = 8, xlabel = {$\log_2(d)$}, ylabel = {Power Estimates}, title = {\bf Example 9}]
\addplot[color = red,   mark = *, step = 1cm,very thin]coordinates{(1,0.78)(2,0.882)(3,0.918)(4,0.982)(5,0.994)(6,1)(7,1)(8,1)};

\addplot[color = violet, mark = *, step = 1cm,very thin]coordinates{(1,0.776)(2,0.89)(3,0.904)(4,0.978)(5,0.992)(6,0.998)(7,1)(8,1)};

\addplot[color = purple, mark = *, step = 1cm,very thin]coordinates{(1,0.73)(2,0.846)(3,0.846)(4,0.938)(5,0.976)(6,0.994)(7,1)(8,1)};

\addplot[color = magenta, mark = *, step = 1cm,very thin]coordinates{(1,0.76)(2,0.892)(3,0.88)(4,0.972)(5,0.992)(6,0.996)(7,1)(8,1)};

\addplot[color = green, mark = square*, step = 1cm,very thin]coordinates{(1,0.784)(2,0.816)(3,0.896)(4,0.952)(5,0.97)(6,0.994)(7,0.996)(8,0.998)};

\addplot[color = blue,  mark = diamond*, step = 1cm,very thin]coordinates{(1,0.904)(2,0.996)(3,0.984)(4,1)(5,1)(6,1)(7,1)(8,1)};

\addplot[color = orange, mark = square*, step = 1cm,very thin]coordinates{(1,0.792)(2,0.886)(3,0.954)(4,0.998)(5,0.996)(6,1)(7,1)(8,1)};

\addplot[color = teal, mark = diamond*, step = 1cm,very thin]coordinates{(1,0.856)(2,0.948)(3,0.976)(4,1)(5,1)(6,1)(7,1)(8,1)};

\addplot[color = yellow, mark = square*, step = 1cm,very thin]coordinates{(1,0.43)(2,0.642)(3,0.706)(4,0.912)(5,0.9)(6,0.992)(7,0.996)(8,1)};

\addplot[color = black, mark = diamond*, step = 1cm,very thin]coordinates{(1,0.742)(2,0.846)(3,0.902)(4,0.974)(5,0.99)(6,0.998)(7,1)(8,1)};
    \end{axis}
    \end{tikzpicture}
    \begin{tikzpicture}
     \begin{axis}[xmin = 1, xmax = 8, ymin = 0, ymax = 1,xlabel = {$\log_2(d)$}, ylabel = {Power Estimates}, title = {\bf Example 10}]
\addplot[color = red,   mark = *, step = 1cm,very thin]coordinates{(1,0.068)(2,0.0206)(3,0.058)(4,0.06)(5,0.084)(6,0.148)(7,0.48)(8,0.998)};

\addplot[color = violet, mark = *, step = 1cm,very thin]coordinates{(1,0.072)(2,0.12)(3,0.202)(4,0.74)(5,1)(6,1)(7,1)(8,1)};

\addplot[color = purple, mark = *, step = 1cm,very thin]coordinates{(1,0.228)(2,0.424)(3,0.88)(4,1)(5,1)(6,1)(7,1)(8,1)};

\addplot[color = magenta, mark = *, step = 1cm,very thin]coordinates{(1,0.106)(2,0.202)(3,0.538)(4,0.994)(5,1)(6,1)(7,1)(8,1)};

\addplot[color = green, mark = square*, step = 1cm,very thin]coordinates{(1,0.294)(2,0.44)(3,0.72)(4,0.96)(5,1)(6,1)(7,1)(8,1)};

\addplot[color = blue,  mark = diamond*, step = 1cm,very thin]coordinates{(1,0.086)(2,0.1)(3,0.112)(4,0.15)(5,0.208)(6,0.306)(7,0.554)(8,0.78)};

\addplot[color = orange, mark = square*, step = 1cm,very thin]coordinates{(1,0.222)(2,0.402)(3,0.722)(4,0.988)(5,1)(6,1)(7,1)(8,1)};

\addplot[color = teal, mark = diamond*, step = 1cm,very thin]coordinates{(1,0.14)(2,0.182)(3,0.222)(4,0.338)(5,0.504)(6,0.806)(7,0.972)(8,1)};

\addplot[color = yellow, mark = square*, step = 1cm,very thin]coordinates{(1,0.1)(2,0.238)(3,0.426)(4,0.926)(5,0.998)(6,1)(7,1)(8,1)};

\addplot[color = black, mark = diamond*, step = 1cm,very thin]coordinates{(1,0.06)(2,0.042)(3,0.042)(4,0.042)(5,0.052)(6,0.058)(7,0.048)(8,0.068)};
    \end{axis}
    \end{tikzpicture}
    \begin{tikzpicture}
     \begin{axis}[xmin = 1, xmax = 8, ymin = 0, ymax = 1,xlabel = {$\log_2(d)$}, ylabel = {Power Estimates}, title = {\bf Example 11}]
\addplot[color = red,   mark = *, step = 1cm,very thin]coordinates{(1,0.06)(2,0.054)(3,0.04)(4,0.062)(5,0.082)(6,0.098)(7,0.19)(8,0.626)};

\addplot[color = violet, mark = *, step = 1cm,very thin]coordinates{(1,0.066)(2,0.056)(3,0.036)(4,0.042)(5,0.074)(6,0.078)(7,0.164)(8,0.496)};

\addplot[color = purple, mark = *, step = 1cm,very thin]coordinates{(1,0.062)(2,0.074)(3,0.042)(4,0.032)(5,0.064)(6,0.072)(7,0.122)(8,0.32)};

\addplot[color = magenta, mark = *, step = 1cm,very thin]coordinates{(1,0.058)(2,0.066)(3,0.038)(4,0.044)(5,0.066)(6,0.082)(7,0.162)(8,0.464)};

\addplot[color = green, mark = square*, step = 1cm,very thin]coordinates{(1,0.096)(2,0.1)(3,0.128)(4,0.176)(5,0.31)(6,0.46)(7,0.8)(8,0.97)};

\addplot[color = blue,  mark = diamond*, step = 1cm,very thin]coordinates{(1,0.062)(2,0.056)(3,0.062)(4,0.058)(5,0.086)(6,0.066)(7,0.124)(8,0.13)};

\addplot[color = orange, mark = square*, step = 1cm,very thin]coordinates{(1,0.092)(2,0.096)(3,0.106)(4,0.2)(5,0.368)(6,0.642)(7,0.972)(8,0.998)};

\addplot[color = teal, mark = diamond*, step = 1cm,very thin]coordinates{(1,0.056)(2,0.054)(3,0.072)(4,0.064)(5,0.12)(6,0.114)(7,0.216)(8,0.278)};

\addplot[color = yellow, mark = square*, step = 1cm,very thin]coordinates{(1,0.044)(2,0.068)(3,0.068)(4,0.138)(5,0.334)(6,0.71)(7,0.998)(8,1)};

\addplot[color = black, mark = diamond*, step = 1cm,very thin]coordinates{(1,0.056)(2,0.046)(3,0.044)(4,0.044)(5,0.068)(6,0.056)(7,0.046)(8,0.078)};
    \end{axis}
    \end{tikzpicture}
    
    \caption{Powers of  BD-${\ell_2}$ (\textcolor{red}{$\tikzcircle{2pt}$}), BD-${\ell_1}$ ({$\tikzcirclev{2pt}$}), BD-${\exp}$ ({$\tikzcirclep{2pt}$}), BD-${\log}$ ({$\tikzcirclem{2pt}$}), FR (\textcolor{green}{$\blacksquare$}), BF (\textcolor{blue}{$\blacklozenge$}), NN (\textcolor{orange}{$\blacksquare$}), MMD (\textcolor{teal}{$\blacklozenge$}), SHP (\textcolor{yellow}{$\blacksquare$}), BG (\textcolor{black}{$\blacklozenge$}) tests in Examples 9-11.}
\label{Sim3}
\end{figure}

Finally, we consider three examples (Examples 12-14) involving sparse alternatives, where $F$ and $G$ differ only in $\lfloor d^{\beta} \rfloor$
many coordinates for $\beta \in (0,1)$. Clearly, in these examples, we have $e_{h,\psi}(F,G)\asymp d^{\beta-1}$ for all choices of $\psi$ considered in this article. For our numerical study, we use $\beta=0.7$ and $n=m=5+\lfloor \sqrt{d}\rfloor$.
\vspace{0.05in}

\textbf{Example 12} Two multivariate normal distributions $F = \mathcal{N}_d(\muvec_d,{\bf I}_d)$ and $G = \mathcal{N}_d({\bf 0}_d, {\bf I}_d)$ differ  only in their locations. The first $[d^\beta]$ coordinates of $\muvec_d$ is $2$ and the rest are zero. 
\vspace{0.05in}

\textbf{Example 13}  Two normal distributions $F = \mathcal{N}_d({\bf 0}_d,{\bf I}_d)$ and $G = \mathcal{N}_d({\bf 0}_d, \sigmat_d)$ differ only in their scales. Here $\sigmat_d$ is a diagonal matrix with first $[d^\beta]$ entries are $5$ and the rest are $1$.
\vspace{0.1in}

\textbf{Example 14}  The distribution $G$ differs from $F= \mathcal{N}_d({\bf 0}_d,2{\bf I}_d)$ only in the first $[d^\beta]$ many coordinates. These coordinate variables are independent and they follow $t$ distribution with 4 degrees of freedom.
\vspace{0.05in}

Figure \ref{Sim4} shows the powers of different tests in these three examples. In the location and scale problems in Examples 12 and 13, our findings were similar to those observed in Examples 1 and 2, respectively. In Example 12, the BF test and the MMD test had an edge over the proposed tests, but the performances of the proposed tests were competitive with the rest. The SHP test had relatively low power. In Example 13, the BG test and the ball divergence tests outperformed their competitors in higher dimensions. In this scale problem, FR and NN tests had poor performance. Example 14 can be viewed as a sparse version of Example 8, where the two distributions differ only in their shapes. In this example, all tests based on the $\ell_2$ distance failed to perform well but the ball divergence tests based on generalized distances performed better. The powers of these tests showed an upward trend with increasing dimension.

\begin{figure}[!h]
    \centering
   \begin{tikzpicture}
    \begin{axis}[xmin = 1, xmax = 8, xlabel = {$\log_2(d)$}, ylabel = {Power Estimates}, title = {\bf Example 12}]
\addplot[color = red,   mark = *, step = 1cm,very thin]coordinates{(1,0.688)(2,0.926)(3,0.996)(4,1)(5,1)(6,1)(7,1)(8,1)};

\addplot[color = violet, mark = *, step = 1cm,very thin]coordinates{(1,0.674)(2,0.92)(3,0.996)(4,1)(5,1)(6,1)(7,1)(8,1)};

\addplot[color = purple, mark = *, step = 1cm,very thin]coordinates{(1,0.55)(2,0.832)(3,0.968)(4,0.998)(5,1)(6,1)(7,1)(8,1)};

\addplot[color = magenta, mark = *, step = 1cm,very thin]coordinates{(1,0.662)(2,0.91)(3,0.996)(4,1)(5,1)(6,1)(7,1)(8,1)};

\addplot[color = green, mark = square*, step = 1cm,very thin]coordinates{(1,0.642)(2,0.874)(3,0.996)(4,1)(5,1)(6,1)(7,1)(8,1)};

\addplot[color = blue,  mark = diamond*, step = 1cm,very thin]coordinates{(1,0.824)(2,0.984)(3,1)(4,1)(5,1)(6,1)(7,1)(8,1)};

\addplot[color = orange, mark = square*, step = 1cm,very thin]coordinates{(1,0.702)(2,0.928)(3,1)(4,1)(5,1)(6,1)(7,1)(8,1)};

\addplot[color = teal, mark = diamond*, step = 1cm,very thin]coordinates{(1,0.748)(2,0.968)(3,1)(4,1)(5,1)(6,1)(7,1)(8,1)};

\addplot[color = yellow, mark = square*, step = 1cm,very thin]coordinates{(1,0.332)(2,0.718)(3,0.94)(4,1)(5,1)(6,1)(7,1)(8,1)};

\addplot[color = black, mark = diamond*, step = 1cm,very thin]coordinates{(1,0.638)(2,0.914)(3,0.996)(4,1)(5,1)(6,1)(7,1)(8,1)};
    \end{axis}
    \end{tikzpicture}
   \begin{tikzpicture}
     \begin{axis}[xmin = 1, xmax = 8, ymin = 0, ymax = 1,xlabel = {$\log_2(d)$}, ylabel = {Power Estimates}, title = {\bf Example 13}]
\addplot[color = red,   mark = *, step = 1cm,very thin]coordinates{(1,0.186)(2,0.438)(3,0.784)(4,0.984)(5,1)(6,1)(7,1)(8,1)};

\addplot[color = violet, mark = *, step = 1cm,very thin]coordinates{(1,0.18)(2,0.416)(3,0.73)(4,0.942)(5,0.998)(6,1)(7,1)(8,1)};

\addplot[color = purple, mark = *, step = 1cm,very thin]coordinates{(1,0.16)(2,0.302)(3,0.526)(4,0.756)(5,0.962)(6,1)(7,1)(8,1)};

\addplot[color = magenta, mark = *, step = 1cm,very thin]coordinates{(1,0.19)(2,0.392)(3,0.69)(4,0.91)(5,0.998)(6,1)(7,1)(8,1)};

\addplot[color = green, mark = square*, step = 1cm,very thin]coordinates{(1,0.196)(2,0.23)(3,0.26)(4,0.294)(5,0.182)(6,0.048)(7,0)(8,0)};

\addplot[color = blue,  mark = diamond*, step = 1cm,very thin]coordinates{(1,0.094)(2,0.112)(3,0.18)(4,0.238)(5,0.37)(6,0.668)(7,0.844)(8,0.998)};

\addplot[color = orange, mark = square*, step = 1cm,very thin]coordinates{(1,0.168)(2,0.206)(3,0.306)(4,0.4)(5,0.37)(6,0.208)(7,0.02)(8,0)};

\addplot[color = teal, mark = diamond*, step = 1cm,very thin]coordinates{(1,0.174)(2,0.28)(3,0.448)(4,0.638)(5,0.876)(6,0.99)(7,1)(8,1)};

\addplot[color = yellow, mark = square*, step = 1cm,very thin]coordinates{(1,0.06)(2,0.104)(3,0.198)(4,0.526)(5,0.754)(6,1)(7,1)(8,1)};

\addplot[color = black, mark = diamond*, step = 1cm,very thin]coordinates{(1,0.208)(2,0.46)(3,0.802)(4,0.982)(5,01)(6,1)(7,1)(8,1)};
    \end{axis}
    \end{tikzpicture}
    \begin{tikzpicture}
     \begin{axis}[xmin = 1, xmax = 8, ymin = 0, ymax = 0.5,xlabel = {$\log_2(d)$}, ylabel = {Power Estimates}, title = {\bf Example 14}]
\addplot[color = red,   mark = *, step = 1cm,very thin]coordinates{(1,0.064)(2,0.048)(3,0.042)(4,0.054)(5,0.042)(6,0.072)(7,0.05)(8,0.076)};

\addplot[color = violet, mark = *, step = 1cm,very thin]coordinates{(1,0.068)(2,0.052)(3,0.054)(4,0.064)(5,0.06)(6,0.094)(7,0.096)(8,0.154)};

\addplot[color = purple, mark = *, step = 1cm,very thin]coordinates{(1,0.068)(2,0.066)(3,0.088)(4,0.082)(5,0.094)(6,0.174)(7,0.232)(8,0.428)};

\addplot[color = magenta, mark = *, step = 1cm,very thin]coordinates{(1,0.066)(2,0.058)(3,0.056)(4,0.066)(5,0.084)(6,0.112)(7,0.15)(8,0.232)};

\addplot[color = green, mark = square*, step = 1cm,very thin]coordinates{(1,0.114)(2,0.074)(3,0.07)(4,0.11)(5,0.108)(6,0.076)(7,0.076)(8,0.078)};

\addplot[color = blue,  mark = diamond*, step = 1cm,very thin]coordinates{(1,0.05)(2,0.054)(3,0.044)(4,0.05)(5,0.04)(6,0.038)(7,0.04)(8,0.068)};

\addplot[color = orange, mark = square*, step = 1cm,very thin]coordinates{(1,0.082)(2,0.072)(3,0.06)(4,0.066)(5,0.058)(6,0.042)(7,0.062)(8,0.064)};

\addplot[color = teal, mark = diamond*, step = 1cm,very thin]coordinates{(1,0.06)(2,0.062)(3,0.044)(4,0.056)(5,0.046)(6,0.03)(7,0.036)(8,0.066)};

\addplot[color = yellow, mark = square*, step = 1cm,very thin]coordinates{(1,0.02)(2,0.028)(3,0.024)(4,0.048)(5,0.022)(6,0.052)(7,0.014)(8,0.06)};

\addplot[color = black, mark = diamond*, step = 1cm,very thin]coordinates{(1,0.068)(2,0.05)(3,0.048)(4,0.06)(5,0.052)(6,0.07)(7,0.04)(8,0.062)};
    \end{axis}
    \end{tikzpicture}











  \caption{Powers of  BD-${\ell_2}$ (\textcolor{red}{$\tikzcircle{2pt}$}), BD-${\ell_1}$ ({$\tikzcirclev{2pt}$}), BD-${\exp}$ ({$\tikzcirclep{2pt}$}), BD-${\log}$ ({$\tikzcirclem{2pt}$}), FR (\textcolor{green}{$\blacksquare$}), BF (\textcolor{blue}{$\blacklozenge$}), NN (\textcolor{orange}{$\blacksquare$}), MMD (\textcolor{teal}{$\blacklozenge$}), SHP (\textcolor{yellow}{$\blacksquare$}), BG (\textcolor{black}{$\blacklozenge$}) tests in Examples 12-14.}
    \label{Sim4}
\end{figure}


\vspace{-0.1in}
\subsection{Analysis of benchmark data sets}
\label{Real Data Analysis}

For further evaluation of the performance of different tests, we analyzed two real data sets, namely the Colon data and the Lightning-2 data. The Colon data set contains expression levels of 2000 genes in 40 `tumor' and 22 `normal' colon tissue samples that were analyzed with an Affymetrix oligonucleotide array. This data set is available in the R package `rda' and its description  can be found in \cite{alon1999broad}. The Lightning-2 data set is available at the UCR Time Series Classification Archive (\href{https://www.cs.ucr.edu/~eamonn/time_series_data_2018/}{\text{https://www.cs.ucr.edu/~eamonn/time\_series\_data} \text{\_2018/}}). It contains 637-dimensional observations from two populations with respective sample sizes being 48 and 73. Descriptions of this data set can be found in \cite{sarkar2020some}. These two data sets have been extensively studied in the classification literature, and it is well known that in each of these data sets, there is a reasonable separation between the two distributions. So, we can assume the alternative hypothesis $H_1:F \neq G$ to be true, and different tests can be compared based on their powers. However, when we used the full data set for testing, all tests rejected $H_0$ both in Colon and Lightning-2 data. Using that single experiment based on the full data set, it was not possible to compare different test procedures.  So, we generated random sub-samples from the entire data set, keeping the sample proportions from the two distributions approximately the same as they were in the original data. Different tests were implemented using these sub-samples, and this procedure was repeated 500 times to estimate their powers. The results for different sub-sample sizes are reported in Figure \ref{real_data_analysis}.

\begin{figure}[h]
    \centering
    \begin{tikzpicture}
    \begin{axis}[xmin = 14.5, xmax = 31.5, xlabel = {Pooled Sample Size}, ylabel = {Power Estimates}, title = {\bf Colon data}]
\addplot[color = red,   mark = *, step = 1cm,very thin]coordinates{(15,0.982)(18,1)(23,1)(26,1)(31,1)};

\addplot[color = violet, mark = *, step = 1cm,very thin]coordinates{(15,0.948)(18,0.998)(23,1)(26,1)(31,1)};

\addplot[color = purple, mark = *, step = 1cm,very thin]coordinates{(15,0.946)(18,0.998)(23,1)(26,1)(31,1)};

\addplot[color = magenta, mark = *, step = 1cm,very thin]coordinates{(15,0.954)(18,0.998)(23,1)(26,1)(31,1)};

\addplot[color = green, mark = square*, step = 1cm,very thin]coordinates{(15,0.846)(18,0.9)(23,0.954)(26,0.984)(31,1)};

\addplot[color = blue,  mark = diamond*, step = 1cm,very thin]coordinates{(15,0.992)(18,1)(23,1)(26,1)(31,1)};

\addplot[color = orange, mark = square*, step = 1cm,very thin]coordinates{(15,0.948)(18,0.998)(23,1)(26,1)(31,1)};

\addplot[color = teal, mark = diamond*, step = 1cm,very thin]coordinates{(15,0.992)(18,1)(23,1)(26,1)(31,1)};

\addplot[color = yellow, mark = square*, step = 1cm,very thin]coordinates{(15,0.654)(18,0.746)(23,0.808)(26,0.930)(31,0.978)};

\addplot[color = black, mark = diamond*, step = 1cm,very thin]coordinates{(15,0.666)(18,0.806)(23,0.954)(26,0.996)(31,1)};
    \end{axis}
    \end{tikzpicture}
    \begin{tikzpicture}
    \begin{axis}[xmin = 11, xmax = 76, xlabel = {Pooled Sample Size}, ylabel = {Power Estimates}, title = {\bf Lightning-2 data}]
\addplot[color = red,   mark = *, step = 1cm,very thin]coordinates{(12,0.114)(25,0.296)(37,0.418)(50,0.518)(62,0.680)(75,0.754)};

\addplot[color = violet, mark = *, step = 1cm,very thin]coordinates{(12,0.168)(25,0.638)(37,0.870)(50,0.970)(62,1)(75,1)};

\addplot[color = purple, mark = *, step = 1cm,very thin]coordinates{(12,0.206)(25,0.686)(37,0.916)(50,0.982)(62,1)(75,1)};

\addplot[color = magenta, mark = *, step = 1cm,very thin]coordinates{(12,0.184)(25,0.658)(37,0.888)(50,0.974)(62,1)(75,1)};

\addplot[color = green, mark = square*, step = 1cm,very thin]coordinates{(12,0.284)(25,0.314)(37,0.378)(50,0.526)(62,0.626)(75,0.804)};

\addplot[color = blue,  mark = diamond*, step = 1cm,very thin]coordinates{(12,0.16)(25,0.22)(37,0.322)(50,0.452)(62,0.574)(75,0.716)};

\addplot[color = orange, mark = square*, step = 1cm,very thin]coordinates{(12,0.146)(25,0.264)(37,0.418)(50,0.604)(62,0.736)(75,0.860)};

\addplot[color = teal, mark = diamond*, step = 1cm,very thin]coordinates{(12,0.148)(25,0.212)(37,0.324)(50,0.44)(62,0.552)(75,0.706)};

\addplot[color = yellow, mark = square*, step = 1cm,very thin]coordinates{(12,0.08)(25,0.242)(37,0.304)(50,0.482)(62,0.682)(75,0.72)};

\addplot[color = black, mark = diamond*, step = 1cm,very thin]coordinates{(12,0.112)(25,0.126)(37,0.112)(50,0.094)(62,0.114)(75,0.092)};
    \end{axis}
    \end{tikzpicture}
      \caption{Powers of  BD-${\ell_2}$ (\textcolor{red}{$\tikzcircle{2pt}$}), BD-${\ell_1}$ ({$\tikzcirclev{2pt}$}), BD-${\exp}$ ({$\tikzcirclep{2pt}$}), BD-${\log}$ ({$\tikzcirclem{2pt}$}), FR (\textcolor{green}{$\blacksquare$}), BF (\textcolor{blue}{$\blacklozenge$}), NN (\textcolor{orange}{$\blacksquare$}), MMD (\textcolor{teal}{$\blacklozenge$}), SHP (\textcolor{yellow}{$\blacksquare$}), BG (\textcolor{black}{$\blacklozenge$}) tests in Colon and Lightning-2 data sets.}
    \label{real_data_analysis}
\end{figure}


In the case of Colon data, BF, MMD, and BD-${\ell_2}$ tests had very high power even when the pooled sample size was $15$. These three tests had comparable performance, and they performed better than all other tests considered here. The ball divergence tests based on generalized distances also had  competitive performances. Like BF, MMD, and NN tests, they also had unit power for samples with a pooled sample size of 18 or higher. FR, BG, and SHP tests  had relatively low powers in this data set. 

Figure \ref{real_data_analysis} clearly shows the superiority of the ball divergence tests BD-${\ell_1}$, BD-${\exp}$ and BD-${\log}$ in the case of Lightning-2 data. These three tests had
much higher powers compared to the rest for samples with a combined sample size larger than 20. Among the other methods, the NN test had the best overall performance. BF, MMD, FR, SHP, and BD-${\ell_2}$ tests also had similar powers. 
The BG test performed poorly in this data set.

\section{Conclusion remarks}

This article investigates the high dimensional behavior of some tests based on \text{ball divergence}. Under appropriate regularity conditions, we have established the consistency of these tests in the HDLSS regime. Unlike many existing tests, the ball divergence tests based on the generalized distance functions can discriminate between two high-dimensional distributions differing outside the first two moments. For suitable choices of the distance function, the resulting tests can perform well even when the underlying distributions are heavy-tailed or there are outliers in the data. 
We have proved the minimax rate optimality of the proposed tests over a certain class of alternatives associated with ball divergence.  
This helps us in establishing the consistency of our tests for a broader class of alternatives, including the shrinking alternatives, when the sample sizes also increase with the dimension. Analyzing several simulated and real data sets, we have amply demonstrated that the proposed tests can outperform the start-of-the-art tests in a wide variety of high-dimensional two-sample problems. 

The proposed tests can be generalized to multivariate $k$-sample problems as well. Note that the variance of the probability  measures of a ball corresponding to different distributions $F_1, F_2,\ldots, F_k$ gives us some idea about the difference among the $F_i$'s. So, the expectation of this variance over the balls with random centers and radius can be  used as a generalized ball divergence measure for a $k$-sample problem. One can construct an estimate of this measure and develop a test based on it. The high dimensional behavior of the resulting test can also be investigated following the ideas given in this article.  

Recently, \cite{pan2020bcov} used the notion of ball divergence to come up with a measure of dependence among several Banach-valued random variables. This dependency measure, which is known as the ball covariance, was used for testing independence among those variables. The theory presented in this article can be used to study the high dimensional behavior of that test, particularly when the sample sizes increase with the dimension. The consistency of the proposed test
can also be proved under appropriate regularity conditions. However, it is not clear what is the minimax rate of separation for that problem and whether the test is minimax rate optimal. This can be considered an interesting problem for future research.

\bibliographystyle{apalike}
\small
\bibliography{refs}

\section*{Appendix: Proofs and mathematical details}

\begin{lemmaA}
If ${\bf X}_1,{\bf X}_2,\ldots, {\bf X}_n \stackrel{iid}{\sim}F$ and ${\bf Y}_1,{\bf Y}_2,\ldots,{\bf Y}_m\stackrel{iid}{\sim}G$ are independent, then
$$\E\{T_{n,m}^\rho\} = \frac{1}{6}\left(\frac{1}{n-2}+\frac{1}{m-2}\right)+\frac{1}{m}(p_{0}-p_{1})+\frac{1}{n}(p_{2}-p_{3})+\Theta_\rho^2(F,G), $$
where $\Theta_\rho^2(F,G)$ is the ball divergence measure defined in Section \ref{The proposed test}, and $p_0,p_1,p_2,p_3$ are given by
\begin{align*}
    p_0 & = \P\big\{\rho(Y_1,X_1)\leq \rho(X_2,X_1)\big\},\hspace{20pt}p_1 = \P\big\{\rho(Y_1,X_1)\leq \rho(X_2,X_1); \rho(Y_2,X_1)\leq \rho(X_2,X_1)\big\},\\
    p_2 & = \P\big\{\rho(X_1,Y_1)\leq \rho(Y_2,Y_1)\big\} ~~\mbox{ and }\hspace{5pt}p_3 = \P\big\{\rho(X_1,Y_1)\leq \rho(Y_2,Y_1); \rho(X_2,Y_1)\leq \rho(Y_2,Y_1)\big\}
\end{align*}
\label{estprop}
\end{lemmaA}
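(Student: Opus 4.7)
The plan is to split the statistic into its two symmetric halves, $T_{n,m}^\rho = T_1 + T_2$, where $T_1$ is the first double sum (indexed by pairs of $X$-indices) and $T_2$ the second (indexed by pairs of $Y$-indices), compute $\E[T_1]$ in closed form, and then read off $\E[T_2]$ by the obvious $(F,n) \leftrightarrow (G,m)$ symmetry of the definition.

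For a fixed ordered pair $i \neq j$ in $\{1,\ldots,n\}$, I would condition on $(\mathbf{X}_i, \mathbf{X}_j)$. Given these, the first inner average $(n-2)^{-1}\sum_{k \neq i,j}\delta(\mathbf{X}_k, \mathbf{X}_j, \mathbf{X}_i)$ is the mean of $n-2$ i.i.d.\ Bernoullis with conditional success probability $p_F(\mathbf{X}_i,\mathbf{X}_j) := F(\mathbb{B}(\mathbf{X}_i, \rho(\mathbf{X}_j, \mathbf{X}_i)))$, the second inner average is the mean of $m$ i.i.d.\ Bernoullis with conditional success probability $p_G(\mathbf{X}_i,\mathbf{X}_j) := G(\mathbb{B}(\mathbf{X}_i, \rho(\mathbf{X}_j, \mathbf{X}_i)))$, and the two are conditionally independent. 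A one-line bias--variance decomposition of the bracketed square $A_{ij}$ then yields
\[
\E[A_{ij}^2 \mid \mathbf{X}_i, \mathbf{X}_j] = (p_F - p_G)^2 + \frac{p_F(1-p_F)}{n-2} + \frac{p_G(1-p_G)}{m}.
\]
Since every ordered pair $(\mathbf{X}_i,\mathbf{X}_j)$ has the same joint law, averaging over the $n(n-1)$ pairs collapses to $\E[T_1] = \E[A_{ij}^2]$.

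The remaining work is to identify the three summands. The squared-bias $\E[(p_F-p_G)^2]$ is, by construction, exactly the $dF(\mathbf{u})\,dF(\mathbf{v})$ half of $\Theta_\rho^2(F,G)$. For the $Y$-variance I would write $\E[p_G(1-p_G)] = \E[p_G] - \E[p_G^2]$ and apply the tower property with independent $\mathbf{Y}_1,\mathbf{Y}_2 \sim G$: the two probabilities are exactly $p_0$ and $p_1$, so this variance contributes $(p_0 - p_1)/m$. For the $X$-variance, write $p_F = \E[\delta(\mathbf{X}_3,\mathbf{X}_2,\mathbf{X}_1) \mid \mathbf{X}_1,\mathbf{X}_2]$ and introduce fresh copies $\mathbf{X}_3, \mathbf{X}_4 \stackrel{iid}{\sim}F$, so that $\E[p_F] = \P(\rho(\mathbf{X}_3,\mathbf{X}_1)\leq \rho(\mathbf{X}_2,\mathbf{X}_1))$ and $\E[p_F^2] = \P(\rho(\mathbf{X}_3,\mathbf{X}_1)\leq \rho(\mathbf{X}_2,\mathbf{X}_1),\, \rho(\mathbf{X}_4,\mathbf{X}_1)\leq \rho(\mathbf{X}_2,\mathbf{X}_1))$. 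By exchangeability of the three distances $\rho(\mathbf{X}_\ell,\mathbf{X}_1)$, $\ell=2,3,4$, these equal $1/2$ and $1/3$ respectively, giving $\E[p_F(1-p_F)] = 1/6$ and thus a contribution of $1/[6(n-2)]$ to $\E[T_1]$. Summing the corresponding expression for $\E[T_2]$ obtained by swapping $F \leftrightarrow G$ and $n \leftrightarrow m$ produces the full stated formula.

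The only step that is not pure bookkeeping is the evaluation $\E[p_F]=1/2$, $\E[p_F^2]=1/3$: the exchangeability argument assigns probability $1/6$ to each of the six strict orderings of the three distances, so the identities hold provided the three distances are almost surely distinct. This is the main technical obstacle and I would handle it by imposing a mild continuity assumption on $F$ (for instance, that for $F$-almost every $\mathbf{x}$ the law of $\rho(\cdot,\mathbf{x})$ under $F$ is atomless, which is implicit in the population definition of $\Theta_\rho^2$ as well). Every other step is a routine conditioning computation.
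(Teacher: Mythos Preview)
Your proposal is correct and reaches the same formula, but the organization differs from the paper's in a way worth noting. The paper expands each squared bracket directly as $A^2+B^2-2AB$, computes the expectation of every resulting term, and is thereby forced to introduce two auxiliary cross-term probabilities $p_4=\P\{\rho({\bf X}_3,{\bf X}_1)\le\rho({\bf X}_2,{\bf X}_1);\,\rho({\bf Y}_1,{\bf X}_1)\le\rho({\bf X}_2,{\bf X}_1)\}$ and its $G$-analog $p_5$; it then has to verify separately, by expanding the defining integral, that $\Theta_\rho^2(F,G)=\tfrac{2}{3}+p_1-2p_4+p_3-2p_5$ before the pieces recombine. Your conditioning-then-bias--variance route sidesteps this entirely: the squared conditional bias $(p_F-p_G)^2$ is already the integrand of the $dF\,dF$ half of $\Theta_\rho^2$, so no auxiliary probabilities and no separate identification of $\Theta_\rho^2$ are needed. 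The exchangeability step you flag (giving $\E[p_F]=1/2$, $\E[p_F^2]=1/3$) is used implicitly in the paper as well, with the same unstated no-ties assumption; you are right to make it explicit.
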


\vspace{-0.1in}
\begin{proof}[\bf Proof.]
\vspace{-0.2in}
Note that $T_{n,m}^\rho$ can be written as $T_{n,m}^\rho  =V_1+V_2$, where
$$V_1 = \frac{1}{n(n-1)}\sum_{1\leq i\not= j\leq n}\bigg\{\frac{1}{n-2}\sum_{k=1, k\not= i,j}^n\delta({\bf X}_k,{\bf X}_j,{\bf X}_i)-\frac{1}{m}\sum_{k=1}^{m}\delta({\bf Y}_k,{\bf X}_j,{\bf X}_i)\bigg\}^2,$$
$$V_2 = \frac{1}{m(m-1)}\sum_{1\leq i\not= j\leq m}\bigg\{\frac{1}{n}\sum_{k=1}^n\delta({\bf X}_k,{\bf Y}_{j},{\bf Y}_{i})-\frac{1}{m-2}\sum_{k=1, k\not = i,j}^{m}\delta({\bf Y}_{k},{\bf Y}_{j},{\bf Y}_{i})\bigg\}^2,$$
and $\delta({\bf s,u,v}) = \mathbbm{1}[\rho({\bf s,v})\leq \rho({\bf u,v})]$. Therefore, we have \begin{align*}
    \E\{V_1\} =&  ~\frac{1}{(n-2)^2}\E\Big\{\big(\sum_{k=1, k\not = 1,2}^n\delta({\bf X}_k,{\bf X}_2,{\bf X}_1)\big)^2\Big\}+\frac{1}{m^2}\E\Big\{\big(\sum_{k=1}^{m}\delta({\bf Y}_{k},{\bf X}_2,{\bf X}_1)\big)^2\Big\}\\
    & -\frac{2}{m(n-2)}\E\Big\{\big(\sum_{k=1, k\not = 1,2}^n\delta({\bf X}_k,{\bf X}_2,{\bf X}_1)\big)\big(\sum_{k=1}^{m}\delta({\bf Y}_{k},{\bf X}_2,{\bf {\bf X}}_1)\big)\Big\}\\
    =& ~\frac{1}{(n-2)^2}\E\Big\{\sum_{k=1, k\not = 1,2}^n\delta({\bf X}_k,{\bf X}_2,{\bf X}_1) + \sum_{k,l=1, k,l\not = 1,2, k\not = l}^n\delta({\bf X}_k,{\bf X}_2,{\bf X}_1)\delta({\bf X}_l,{\bf X}_2,{\bf X}_1)\Big\}\\
    & + \frac{1}{m^2}\E\Big\{\sum_{k=1, k\not = 1,2}^m\delta({\bf Y}_{k},{\bf X}_2,{\bf X}_1) + \sum_{k,l=1, k,l\not = 1,2, k\not = l}^m\delta({\bf Y}_{k},{\bf X}_2,{\bf X}_1)\delta({\bf Y}_{l},{\bf X}_2,{\bf X}_1)\Big\}\\
    &-\frac{2}{m(n-2)}\E\Big\{\sum_{k=1, k\not = 1,2}^n\sum_{l=1}^{m}\delta({\bf X}_k,{\bf X}_2,{\bf X}_1)\delta({\bf Y}_{l},{\bf X}_2,{\bf {\bf X}}_1)\Big\}\\
    =& ~\frac{1}{n-2}~\P\Big\{\rho({\bf X}_3,{\bf X}_1)\leq\rho({\bf X}_2,{\bf X}_1)\Big\}\\ & +\frac{n-3}{n-2}~\P\Big\{\rho({\bf X}_3,{\bf X}_1)\leq\rho({\bf X}_2,{\bf X}_1);\rho({\bf X}_4,{\bf X}_1)\leq\rho({\bf X}_2,{\bf X}_1)\Big\}\\
    & + \frac{1}{m} ~\P\Big\{\rho({\bf Y}_1,{\bf X}_1)\leq \rho({\bf X}_2,{\bf X}_1)\Big\}\\ 
    & + \frac{m-1}{m}~\P\Big\{\rho({\bf Y}_1,{\bf X}_1)\leq \rho({\bf X}_2,{\bf X}_1);\rho({\bf Y}_2,{\bf X}_1)\leq \rho({\bf X}_2,{\bf X}_1)\Big\} \\
    &-2~\P\Big\{\rho({\bf X}_3,{\bf X}_1)\leq \rho({\bf X}_2,{\bf X}_1);\rho({\bf Y}_1,{\bf X}_1)\leq \rho({\bf X}_2,{\bf X}_1)\Big\}\\
    =& ~\frac{1}{(n-2)}\Big\{\frac{1}{2} +(n-3)\frac{1}{3}\Big\}+\frac{1}{m}\Big\{p_0+ (m-1)p_1\Big\}-2p_4\\
    =& ~\frac{1}{3}+\frac{1}{6(n-2)}+\frac{1}{m}(p_{0}-p_{1})+p_{1}-2p_{4},
\end{align*}
where  $p_0$ and $p_1$ are as defined before and $p_4 = \P\{\rho({\bf X}_3,{\bf X}_1)\leq \rho({\bf X}_2,{\bf X}_1);\rho({\bf Y}_1,{\bf X}_1)\leq \rho({\bf X}_2,{\bf X}_1)\}$. Similarly one can show that
\begin{align*}
    \E\{V_2\} = \frac{1}{3}+\frac{1}{6(m-2)}+\frac{1}{n}(p_{2}-p_{3})+p_{3}-2p_{5},
\end{align*}
where $p_{2}$ and $p_{3}$ are as defined before and $p_{5} = \P\{\rho({\bf Y}_3,{\bf Y}_1)\leq \rho({\bf Y}_2,{\bf Y}_1);\rho({\bf X}_1,{\bf Y}_1)\leq \rho({\bf Y}_2,{\bf Y}_1)\}$. Hence, we have
$$
\E\{T_{n,m}^\rho\} = \frac{1}{6(n-2)}+\frac{1}{6(m-2)}+\frac{1}{m}(p_{0}-p_{1})+\frac{1}{n}(p_{2}-p_{3})+\frac{2}{3}+p_{1}-2p_{4}+p_{3}-2p_{5}.
$$ 

Now observe that
\begin{equation*}
    \begin{split}
        \Theta_\rho^2(F,G) = &~\int \left\{F(B(u,\rho(v,u)))-G(B(u,\rho(v,u)))\right\}^2\,[dF(u)dF(v)+dG(u)dG(v)]\\
       = & ~\int F^2(B(u,\rho(v,u))dF(u)dF(v)-2\int F(B(u,\rho(v,u))G(B(u,\rho(v,u))dF(u)dF(v)\\
        & +\int G^2(B(u,\rho(v,u))dF(u)dF(v)+\int F^2(B(u,\rho(v,u))dG(u)dG(v)\\
        & -2\int F(B(u,\rho(v,u))G(B(u,\rho(v,u))dG(u)dG(v)+\int G^2(B(u,\rho(v,u))dG(u)dG(v)\\
        = & ~\frac{1}{3}-2p_4+p_1+p_3-2p_5+\frac{1}{3}
        = ~p_{1}+p_{3}-2p_{4}-2p_{5}  +\frac{2}{3}.
    \end{split}
\end{equation*}

Hence, we have $~\E\{T_{n,m}^\rho\} = \frac{1}{6}\left(\frac{1}{n-2}+\frac{1}{m-2}\right)+\frac{1}{m}(p_{0}-p_{1})+\frac{1}{n}(p_{5}-p_{3})+\Theta_\rho^2(F,G).$ 
\end{proof}

\begin{lemmaA}
If ${\bf X}_1,{\bf X}_2,\ldots, {\bf X}_n \stackrel{iid}{\sim}F$ and ${\bf Y}_1,{\bf Y}_2,\ldots,{\bf Y}_m\stackrel{iid}{\sim}G$ are independent, then
$$ Var(T_{n,m}^\rho) \leq C_1 \Theta_\rho^2(F,G)\Bigg(\frac{1}{n}+\frac{1}{m}\Bigg)+C_2\Bigg(\frac{1}{n}+\frac{1}{m}\Bigg)^2, $$
\noindent
where the constants $C_1$ and $C_2$ are independent of the dimensions $d$. 
\label{varbound}
\end{lemmaA}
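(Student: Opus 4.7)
The plan is to use the decomposition $T_{n,m}^\rho = V_1 + V_2$ from Lemma A.1 and the inequality $Var(T_{n,m}^\rho) \leq 2Var(V_1) + 2Var(V_2)$. By the symmetric roles of $F$ and $G$, it suffices to bound $Var(V_1)$ and to obtain the analogous bound for $V_2$ by identical arguments.

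The first step is to isolate the signal and noise inside each $D_{ij}$. Defining
\begin{equation*}
\Delta(x,y) = F(\mathbb{B}(x,\rho(y,x))) - G(\mathbb{B}(x,\rho(y,x))),
\end{equation*}
I would write $D_{ij} = \Delta(X_i,X_j) + \epsilon_{ij}$, where $\epsilon_{ij}$ is the centered empirical noise. Since $\delta$ is a $\{0,1\}$-valued kernel and its summands are conditionally independent given $(X_i,X_j)$, the conditional variance $E[\epsilon_{ij}^2 \mid X_i,X_j]$ is at most $C_0(1/(n-2)+1/m)$. Squaring and averaging then yields $V_1 = A_1 + 2A_2 + A_3$, where $A_1$ is the $(n(n-1))^{-1}$-average of $\Delta(X_i,X_j)^2$, $A_2$ of $\Delta(X_i,X_j)\epsilon_{ij}$, and $A_3$ of $\epsilon_{ij}^2$, so $Var(V_1)\leq 3[Var(A_1)+4Var(A_2)+Var(A_3)]$.

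Next I would bound the three pieces separately. (a) $A_1$ is a V-statistic of degree two with bounded kernel $\Delta^2 \leq 1$. Its standard Hoeffding decomposition gives $Var(A_1)\leq c_1[\sigma_1^2/n+1/n^2]$, and $\sigma_1^2 \leq E[\Delta(X_1,X_2)^2]$ is bounded above by $\Theta_\rho^2(F,G)$ since that second moment is precisely the first integral in the definition of $\Theta_\rho^2(F,G)$. Hence $Var(A_1)\leq c_1'[\Theta_\rho^2(F,G)/n + 1/n^2]$. (b) $A_3$ is an average of bounded nonnegative quantities with expectation $O(1/n+1/m)$; splitting $Cov(\epsilon_{ij}^2,\epsilon_{kl}^2)$ by the size of the overlap $|\{i,j\}\cap\{k,l\}|\in\{0,1,2\}$ and using the $O(1/n+1/m)$ conditional moments gives $Var(A_3)\leq c_2(1/n+1/m)^2$. (c) $E[A_2]=0$ by the conditional mean-zero property of $\epsilon_{ij}$. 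Cauchy--Schwarz together with the bounds on $\Delta$ and on $E[\epsilon^2]$ then yields $Var(A_2)\leq c_3[\Theta_\rho^2(F,G)+1/n+1/m]\cdot(1/n+1/m)$. Combining these three and absorbing the cross terms produces $Var(V_1)\leq c\,[\Theta_\rho^2(F,G)(1/n+1/m)+(1/n+1/m)^2]$, with constants that depend only on the boundedness of $\delta$ and thus are independent of $d$.

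The main obstacle is bounding $Var(A_2)$ cleanly: the noises $\epsilon_{ij}$ and $\epsilon_{kl}$ are highly correlated because they are built from the same pool of $X_k$'s (for $k\ne i,j$) and share all $m$ of the $Y_\ell$'s, so a direct Cauchy--Schwarz would give a bound that is too loose. I would resolve this by further decomposing $\epsilon_{ij} = \epsilon_{ij}^X - \epsilon_{ij}^Y$ into its $X$- and $Y$-parts, expanding $Cov(\Delta_{ij}\epsilon_{ij},\Delta_{kl}\epsilon_{kl})$ according to both the overlap type of $\{i,j\}\cap\{k,l\}$ and the overlap among the inner indices of the $\delta$-summands, and using the conditional independence of the inner indicators together with $|\Delta|\leq 1$ and $|\delta|\leq 1$ to kill most cross terms. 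The surviving contributions, after counting the number of index configurations at each overlap level, telescope into the announced $\Theta_\rho^2(F,G)(1/n+1/m)$ and $(1/n+1/m)^2$ orders, completing the proof.
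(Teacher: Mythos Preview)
Your approach is correct in outline but differs substantially from the paper's proof. The paper does \emph{not} decompose $D_{ij}=\Delta(X_i,X_j)+\epsilon_{ij}$ into signal plus centered noise. Instead it observes that $V_1$ is, up to lower-order remainders, a two-sample U-statistic $\hat U_{A_1}^{(4,2)}$ of degree $(4,2)$ in $(X_1,\dots,X_n;Y_1,\dots,Y_m)$ with a bounded kernel $\psi_{A_1}$. The standard variance formula for such U-statistics gives $Var(V_1)=\sigma_{1,0}^2/n+\sigma_{0,1}^2/m+O((1/n+1/m)^2)$, and the whole argument then reduces to computing the first-order Hoeffding projection $\psi_{A_1,1,0}^s(x)=\E[\psi_{A_1}^{sym}\mid X_1=x]$ explicitly. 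The paper writes this projection as $g_1(x)+g_2(x)+g_3(x)$, bounds each $\E g_i^2(X)$ by a constant times $A_1:=\E[\Delta(X_1,X_2)^2]$ (using $|g_i|\le 1/4$ and Cauchy--Schwarz), and obtains $\sigma_{1,0}^2\le \tfrac{11}{2}A_1$ with an analogous bound for $\sigma_{0,1}^2$. Summing over $V_1$ and $V_2$ and using $A_1+A_2=\Theta_\rho^2$ yields the result.

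The trade-off between the two routes: the paper's U-statistic approach is shorter because it packages all the overlap bookkeeping into the known variance expansion for two-sample U-statistics; the only real work is bounding the first-order projection variance, which is a single conditional-expectation computation. Your signal/noise decomposition is more elementary (it avoids invoking the general variance formula for degree-$(4,2)$ U-statistics) but pays for it with the delicate $A_2$ analysis you flagged, where the shared inner indices of $\epsilon_{ij}^X$ and the shared $Y$-sample in $\epsilon_{ij}^Y$ force exactly the kind of case-by-overlap accounting that the Hoeffding machinery would have absorbed automatically. Both approaches exploit the same two facts---boundedness of $\delta$ (hence dimension-free constants) and $\E[\Delta^2]\le\Theta_\rho^2$---so they land on the same bound, but the paper's route is considerably cleaner to execute.
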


\begin{proof}[\bf Proof]
Note that $\Theta_\rho^2(F,G)$ can be written as
$\Theta_\rho^2(F,G)=A_1+A_2$,
where
\begin{equation*}
    \addtolength{\jot}{-0.5em}
\begin{split}
        A_1 &= \int \int \{F({\mathbb B}({\bf u},\rho({\bf v},{\bf u}))-G({\mathbb B}({\bf u},\rho({\bf v},{\bf u}))\}^2 dF({\bf u})dF({\bf v})\\
        &~~~~~~~~~~~~~~~~~~~~~~~= \E\Big(F(B({\bf {\bf X}}_1,\rho({\bf X}_2,{\bf X}_1)))-G(B({\bf X}_1,\rho({\bf X}_2,{\bf X}_1)))\Big)_,^2 \mbox{ and}
\end{split}
\end{equation*}
\begin{equation*}
    \addtolength{\jot}{-0.5em}
\begin{split}
        A_2 &= \int \int \{F({\mathbb B}({\bf u},\rho({\bf v},{\bf u}))-G({\mathbb B}({\bf u},\rho({\bf v},{\bf u}))\}^2 dG({\bf u})dG({\bf v}) 
        \\&~~~~~~~~~~~~~~~~~~~~~~~= \E\Big(F(B({\bf Y}_1,\rho({\bf Y}_2,{\bf Y}_1)))-G(B({\bf Y}_1,\rho({\bf Y}_2,{\bf Y}_1)))\Big)^2.
    \end{split}
\end{equation*}
It can be verified that $V_1$ (as defined in the proof of Lemma A.1) can be expressed as
\begin{equation*}
    \begin{split}
        V_1 & = \frac{1}{n(n-1)}\sum_{1\leq i\not=j\leq n}\Big\{\frac{1}{n-2}\sum_{k=1, k\not=i,j}^n\delta({\bf X}_k,{\bf X}_j,{\bf X}_i)-\frac{1}{m}\sum_{k=1}^m\delta({\bf Y}_k,{\bf X}_j,{\bf X}_i)\Big\}^2\\
        & = \frac{1}{n(n-1)(n-2)^2m^2}\sum_{i\not= j = 1}^n\sum_{u, u'\not=i,j}^n \sum_{v,v'=1}^m\Big\{\delta({\bf X}_{u},{\bf X}_j,{\bf X}_i)\delta({\bf X}_{u'},{\bf X}_j,{\bf X}_i)\\
        & + \delta({\bf Y}_v,{\bf X}_j,{\bf X}_i)\delta({\bf Y}_{v'},{\bf X}_j,{\bf X}_i)-\delta({\bf X}_{u'},{\bf X}_j,{\bf X}_i)\delta(Y_v,{\bf X}_j,{\bf X}_i)-\delta({\bf X}_{u},{\bf X}_j,{\bf X}_i)\delta({\bf Y}_2,{\bf X}_j,{\bf X}_i)\Big\}\\
        & = \frac{1}{n(n-1)(n-2)^2m^2}\sum_{i\not= j = 1}^n\sum_{u, u'\not=i,j}^n \sum_{v,v'=1}^m \psi_{A_1}({\bf X}_i,{\bf X}_j,{\bf X}_u,{\bf X}_{u'};{\bf Y}_{v},{\bf Y}_{v'}), \mbox{ say}.
    \end{split}
\end{equation*}

Clearly, $V_1$ can be written as a linear combination of U-statistics of different degrees. Let $\hat U_{A_1}^{(4,2)}$ be the U-statistic with the kernel $\psi_{A_1}({\bf X}_i,{\bf X}_j,{\bf X}_u,{\bf X}_{u'};{\bf Y}_{v},{\bf Y}_{v'})$, which has the same degree as $V_1$. So, it determines the order of $Var(V_1)$. More specifically, we get
$$        V_1 = \frac{1}{(n-2)^2m^2}\left\{4{n-2\choose 2}{m\choose 2}\hat U_{A_1}^{(4,2)}\right\}+O_P\left(\Big(\frac{1}{n}+\frac{1}{m}\Big)\right) \mbox{ and}$$  $$ Var(V_1) = \frac{\sigma_{1,0}^2(A_1)}{n}+\frac{\sigma_{0,1}^2(A_1)}{m}+C_2\left(\Big(\frac{1}{n}+\frac{1}{m}\Big)^2\right).
  $$
Note that here $\sigma_{1,0}^2(A_1) = Var(\psi_{A_1,1,0}^s({\bf X}))$ and $\sigma_{0,1}^2(A_1) = Var(\psi_{A_1,0,1}^s({\bf X}))$, where $\psi_{A_1,1,0}^s({\bf x})=E(\psi_{A_1}({\bf x},{\bf X}_2,{\bf X}_3,{\bf X}_{4};{\bf Y}_{1},{\bf Y}_{2} )$ and
$\psi_{A_1,0,1}^s({\bf y})=E(\psi_{A_1}({\bf X}_1,{\bf X}_2,{\bf X}_3,{\bf X}_{4};{\bf y},{\bf Y}_{2})$. Since $\psi_{A_1}$ is uniformly bounded, the proportionality constant $C_2$ does not depend on $d$. Now,
\begin{equation*}
    \begin{split}
        \psi_{A_1,1,0}^s({\bf x})  =& ~\E\Big\{\frac{1}{4!2!}\hspace{-0.1in}\sum_{\pi\in\pi(1,2,3,4)}\sum_{\gamma\in\gamma(1,2)}\psi_{A_1}({\bf X}_{\pi(1)},{\bf X}_{\pi(2)},{\bf X}_{\pi(3)},{\bf X}_{\pi(4)};{\bf Y}_{\gamma(1)},{\bf Y}_{\gamma(2)})\mid {\bf X}_1 = {\bf x}\Big\}\\
         = &~\frac{1}{4}\E\Big\{\psi_{A_1}({\bf x},{\bf X}_2,{\bf X}_3,{\bf X}_4;{\bf Y}_1,{\bf Y}_2)+\psi_{A_1}({\bf X}_i,{\bf x},{\bf X}_3,{\bf X}_4;{\bf Y}_1,{\bf Y}_2)\\
        &~~~~~~~~~~~~~~~~~~~~~~~~~~+ \psi_{A_1}({\bf X}_i,{\bf X}_2,{\bf x},{\bf X}_4;{\bf Y}_1,{\bf Y}_2)+\psi_{A_1}({\bf X}_1,{\bf X}_2,{\bf X}_3,{\bf x};{\bf Y}_1,{\bf Y}_2)\Big\}\\
        = & ~\frac{1}{4}\left\{\E\Big(F(B({\bf x},\rho({\bf X}_2,{\bf x})))-G(B({\bf x},\rho({\bf X}_2,{\bf x})))\Big)^2\right\} \\
        & + \frac{1}{4}\left\{\E\Big(F(B({\bf X}_1,\rho({\bf x},{\bf X}_1)))-G(B({\bf X}_1,\rho({\bf x},{\bf X}_1)))\Big)^2\right\}\\
        &+ \frac{1}{2}\E\Big(\delta({\bf x},{\bf X}_2,{\bf X}_1)-G(B({\bf X}_1,\rho({\bf X}_2,{\bf X}_1)))\Big)\\
        &~~~~~~~~~~~~~~~~~~~~~~~~~~\times \Big(F(B({\bf X}_1,\rho({\bf X}_2,{\bf X}_1)))-G(B({\bf X}_1,\rho({\bf X}_2,{\bf X}_1)))\Big)\\
        & = g_1({\bf x})+g_2({\bf x})+g_3({\bf x}),~\mbox{ say}.
    \end{split}
\end{equation*}
So, we have $\sigma^2_{1,0}=
\E\big\{\psi_{A_1,1,0}^s({\bf X}) - \E(\psi_{A_1,1,0}^s({\bf X}))\big\}^2  = \E\big\{\psi_{A_1,1,0}^s({\bf X}) - A_1\big\}^2= \E\big\{g_1({\bf X}) +g_2({\bf X})+g_3({\bf X})- A_1\big\}^2$. So, using the inequality, $E(\sum_{i=1}^{p}Z_1)^2 \le p~E(\sum_{i=1}^{p}Z_i^2)$ and the fact that 
$0 \le g_1({\bf x}), g_2({\bf x}) \le 1/4$ for all ${\bf x}$, we get 
$$\sigma_{1,0}^2 \leq 4\bigg\{\E g_1^2({\bf X}) + \E g_2^2({\bf X}) + \E g_3^2({\bf X}) + A_1^2\bigg\} \leq 4\bigg\{\frac{1}{4}\E g_1({\bf X})+ \frac{1}{4}\E g_2({\bf X})+ \E g_3^2({\bf X})+A_1\bigg\}.$$
Now note that $\E g_1({\bf X})=\E g_2({\bf X})=\frac{1}{4}A_1$. Also, using Cauchy-Schwartz inequality on $g_3({\bf X})$, we get
$$\E g_3^2({\bf X}) \leq \frac{1}{4}\E \Big(\delta({\bf X},{\bf X}_2,{\bf X}_1)-G(B({\bf X}_1,\rho({\bf X}_2,{\bf X}_1)))\Big)^2 A_1 \leq \frac{1}{4}A_1.$$
Hence, we have
$\sigma_{1,0}^2(A_1) \leq\frac{11}{2} A_1.$
Similarly, we can also show that $\sigma_{0,1}^2(A_1)\leq \frac{9}{2} A_1$. Combining these, we get 
$$Var(V_1)\leq \frac{11}{2} A_1\Big(\frac{1}{n}+\frac{1}{m}\Big)+C_2\left(\Big(\frac{1}{n}+\frac{1}{m}\Big)^2\right).$$

Using the same set of arguments, we also have
$$Var(V_2)\leq \frac{11}{2}A_2\Big(\frac{1}{n}+\frac{1}{m}\Big)+C_2\left(\Big(\frac{1}{n}+\frac{1}{m}\Big)^2\right).$$

This completes the proof.
\end{proof}

\begin{lemmaA}
Consider a random permutation $\pi$ of $\{1,2,\ldots, n+m\}$. If $T_{n,m,\pi}^\rho$ denotes the permuted test statistic (the  permutation analog of $T_{n,m}^{\rho}$), given the pooled sample $\mathcal{U}= \{{\bf U}_1,{\bf U}_2,\ldots,{\bf U}_{n+m}\}$, the conditional expectation $T_{n,m,\pi}^\rho$ is given by
$$\E\{T_{n,m,\pi}^\rho\mid \mathcal{U}\} = \frac{1}{6}\left(\frac{1}{n}+\frac{1}{m}+\frac{1}{n-2}+\frac{1}{m-2}\right).
\vspace{-0.2in}$$
\label{condexppermstat}
\end{lemmaA}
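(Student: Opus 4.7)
The plan is to mirror the combinatorial expansion used in the proof of Lemma~A.1, while exploiting the exchangeability induced by the uniform random permutation $\pi$. First, I would write $T_{n,m,\pi}^\rho = V_1^\pi + V_2^\pi$, the permutation analogs of $V_1$ and $V_2$ from the Lemma~A.1 proof, obtained by replacing ${\bf X}_i$ with ${\bf U}_{\pi(i)}$ and ${\bf Y}_j$ with ${\bf U}_{\pi(n+j)}$. Expanding the square in $V_1^\pi$ and taking the conditional expectation given $\mathcal{U}$ produces exactly the same four types of indicator products handled in that earlier proof, just with the independent X's and Y's replaced by permuted pooled observations.

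The structural observation driving everything is that, conditional on $\mathcal{U}$, any ordered tuple $({\bf U}_{\pi(i_1)},\ldots,{\bf U}_{\pi(i_k)})$ with distinct $i_1,\ldots,i_k$ is uniformly distributed over ordered $k$-tuples of distinct elements of $\mathcal{U}$. Hence the conditional probability of any indicator event involving such a tuple depends only on how many distinct positions are involved, not on whether those positions fall in the X-block $\{1,\ldots,n\}$ or the Y-block $\{n+1,\ldots,n+m\}$. Consequently, the six probabilities $p_0,\ldots,p_5$ appearing in the proof of Lemma~A.1 collapse, after conditioning, to only two quantities:
\begin{align*}
r_1 &:= \P\bigl\{\rho({\bf U}_{\pi(3)},{\bf U}_{\pi(1)})\le \rho({\bf U}_{\pi(2)},{\bf U}_{\pi(1)})\bigm|\mathcal{U}\bigr\},\\
r_2 &:= \P\bigl\{\rho({\bf U}_{\pi(3)},{\bf U}_{\pi(1)})\le \rho({\bf U}_{\pi(2)},{\bf U}_{\pi(1)}),\ \rho({\bf U}_{\pi(4)},{\bf U}_{\pi(1)})\le \rho({\bf U}_{\pi(2)},{\bf U}_{\pi(1)})\bigm|\mathcal{U}\bigr\}.
\end{align*}
Running the Lemma~A.1 algebra with these substitutions telescopes cleanly (the $r_2$ terms from the two diagonal contributions combine with $-2r_2$ from the cross term to leave only $-r_2(\tfrac{1}{n-2}+\tfrac{1}{m})$ against $+r_1(\tfrac{1}{n-2}+\tfrac{1}{m})$) and yields
$$\E\{V_1^\pi\mid \mathcal{U}\} = (r_1-r_2)\left(\frac{1}{n-2}+\frac{1}{m}\right), \qquad \E\{V_2^\pi\mid \mathcal{U}\} = (r_1-r_2)\left(\frac{1}{m-2}+\frac{1}{n}\right).$$

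Finally, I would evaluate $r_1$ and $r_2$ by symmetry alone. Swapping positions $2$ and $3$ in the uniform tuple defining $r_1$ gives $r_1 = \P\{\rho({\bf U}_{\pi(2)},{\bf U}_{\pi(1)})\le \rho({\bf U}_{\pi(3)},{\bf U}_{\pi(1)})\mid\mathcal{U}\}$, so under the standing continuity assumption on $F$ and $G$ (which makes pairwise distances a.s.\ distinct) the two halves of the dichotomy split evenly and $r_1 = 1/2$. Similarly, $r_2$ is the probability that ${\bf U}_{\pi(2)}$ is the farthest from ${\bf U}_{\pi(1)}$ among the three exchangeable points $\{{\bf U}_{\pi(2)},{\bf U}_{\pi(3)},{\bf U}_{\pi(4)}\}$, hence $r_2=1/3$. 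Therefore $r_1-r_2 = 1/6$, and summing the two conditional expectations delivers the claim. The only real obstacle is bookkeeping: carefully checking that each of $p_0,\ldots,p_5$ truly collapses to $r_1$ or $r_2$, and that the cancellations from the Lemma~A.1 arithmetic survive the substitution. No new idea beyond exchangeability is needed.
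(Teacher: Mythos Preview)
Your proposal is correct and essentially identical to the paper's proof: the paper also expands the square, uses exchangeability of $({\bf U}_{\pi(i)})$ given $\mathcal{U}$ to reduce everything to the two quantities $q_1,q_2$ (your $r_1,r_2$), obtains $\E\{V_1^\pi\mid\mathcal{U}\}=(q_1-q_2)(\tfrac{1}{n-2}+\tfrac{1}{m})$ and the symmetric expression for $V_2^\pi$, and then sets $q_1=1/2$, $q_2=1/3$ by exchangeability. Your justification for $r_1=1/2$ and $r_2=1/3$ is in fact more explicit than the paper's, which simply asserts these values from exchangeability.
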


\noindent
\begin{proof}[\bf Proof]
For any random permutation $\pi$ of $\{1,2,\ldots, n+m\}$, $T_{n,m,\pi}^\rho$ can be written as
\begin{equation*}
    \begin{split}
        T_{n,m,\pi}^\rho  = \hspace{5pt}& \frac{1}{n(n-1)}\sum_{1\leq i\not= j\leq n}\bigg\{\frac{1}{n-2}\sum_{k=1, k\not= i,j}^n\delta({\bf U}_{\pi(k)},{\bf U}_{\pi(j)},{\bf U}_{\pi(i)})
        \\&~~~~~~~~~~~~~~~~~~~~~~~~~~~~~~~~~~~~~~~~~~~~~-\frac{1}{m}\sum_{k=n+1}^{n+m}\delta({\bf U}_{\pi(k)},{\bf U}_{\pi(j)},{\bf U}_{\pi(i)})\bigg\}^2\\
        & + \frac{1}{m(m-1)}\sum_{n+1\leq i\not= j\leq n+m}\bigg\{\frac{1}{n}\sum_{k=1}^n\delta({\bf U}_{\pi(k)},{\bf U}_{\pi(j)},{\bf U}_{\pi(i)})\\
        &~~~~~~~~~~~~~~~~~~~~~~~~~~~~~~~~~~~~~~~~~~~~~-\frac{1}{m-2}\sum_{k=n+1, k\not = i,j}^{n+m}\delta({\bf U}_{\pi(k)},{\bf U}_{\pi(j)},{\bf U}_{\pi(i)})\bigg\}^2,
    \end{split}
\end{equation*}
So, the conditional expectation of $T_{n,m,\pi}^\rho$ for any given $\mathcal{U}$ can be expressed as 
\begin{equation*}
    \begin{split}
        \E\{T_{n,m,\pi}^\rho\mid \mathcal{U}\} = & ~\E\Bigg\{\bigg\{\frac{1}{n-2}\sum_{k=3}^n\delta({\bf U}_{\pi(k)},{\bf U}_{\pi(2)},{\bf U}_{\pi(1)})
        \\
        &~~~~~~~~~~~~~~~~~~~~~~~~~~~~~~~~~~~~~~~-\frac{1}{m}\sum_{k=n+1}^{n+m}\delta({\bf U}_{\pi(k)},{\bf U}_{\pi(2)},{\bf U}_{\pi(1)})\bigg\}^2\Big|~~ \mathcal{U}\Bigg\}\\
        & + \E\Bigg\{\bigg\{\frac{1}{n}\sum_{k=1}^n\delta({\bf U}_{\pi(k)},{\bf U}_{\pi(2)},{\bf U}_{\pi(1)})\\
        &~~~~~~~~~~~~~~~~~~~~~~~~~~~~~~~~~~~~~~~-\frac{1}{m-2}\sum_{k=n+1, k\not = i,j}^{n+m}\delta({\bf U}_{\pi(k)},{\bf U}_{\pi(2)},{\bf U}_{\pi(1)})\bigg\}^2\Big|~~ \mathcal{U}\Bigg\}.\\
    \end{split}
\end{equation*}
Now note that
\begin{equation*}
    \begin{split}
        & \E\left\{\bigg\{\frac{1}{n-2}\sum_{k=3}^n\delta({\bf U}_{\pi(k)},{\bf U}_{\pi(2)},{\bf U}_{\pi(1)})-\frac{1}{m}\sum_{k=n+1}^{n+m}\delta({\bf U}_{\pi(k)},{\bf U}_{\pi(2)},{\bf U}_{\pi(1)})\bigg\}^2\Big|~~ \mathcal{U}\right\}\\
        =~& \frac{1}{(n-2)^2}\E\Big\{\sum_{k=3}^n \delta({\bf U}_{\pi(k)},{\bf U}_{\pi(2)},{\bf U}_{\pi(1)})\mid\mathcal{U}\Big\}\\
        & ~ +\frac{1}{(n-2)^2}\sum_{k,l=3, k\not=l}^n \E\Big\{\delta({\bf U}_{\pi(k)},{\bf U}_{\pi(2)},{\bf U}_{\pi(1)})\delta({\bf U}_{\pi(l)},{\bf U}_{\pi(2)},{\bf U}_{\pi(1)})\mid\mathcal{U}\Big\}\\
        & ~ +\frac{1}{m^2}\sum_{k=n+1}^{n+m}\E\big\{\delta({\bf U}_{\pi(k)},{\bf U}_{\pi(2)},{\bf U}_{\pi(1)})\mid\mathcal{U}\Big\}\\
        & ~ +\frac{1}{m^2}\sum_{k,l=n+1, k\not=l}^{n+m}\E\Big\{\delta({\bf U}_{\pi(k)},{\bf U}_{\pi(2)},{\bf U}_{\pi(1)})\delta({\bf U}_{\pi(l)},{\bf U}_{\pi(2)},{\bf U}_{\pi(1)})\mid\mathcal{U}\Big\}\\
        & ~-
        \frac{2}{m(n-2)}\sum_{k=3}^n\sum_{l=n+1}^{n+m}\E\Big\{\delta({\bf U}_{\pi(k)},{\bf U}_{\pi(2)},{\bf U}_{\pi(1)})\delta({\bf U}_{\pi(l)},{\bf U}_{\pi(2)},{\bf U}_{\pi(1)})\mid\mathcal{U}\Big\}\\
        =~& \frac{(n-2)q_1}{(n-2)^2}+\frac{(n-2)(n-3)q_2}{(n-2)^2}+\frac{mq_1}{m^2}+\frac{m(m-1)q_2}{m^2}-\frac{2m(n-2)q_2}{m(n-2)}\\
        =~& (q_1-q_2)\Big(\frac{1}{n-2}+\frac{1}{m}\Big), \mbox{ where}
    \end{split}
\end{equation*}
$q_1= \E\{\delta({\bf U}_{\pi(1)},{\bf U}_{\pi(2)},{\bf U}_{\pi(3)})\mid \mathcal{U}\}$, $q_2=\E\{\delta({\bf U}_{\pi(1)},{\bf U}_{\pi(2)},{\bf U}_{\pi(3)})\delta({\bf U}_{\pi(4)},{\bf U}_{\pi(2)},{\bf U}_{\pi(3)})\mid\mathcal{U}\}$. Similarly, we can also show that
\begin{align*}
    &\E\left\{\bigg\{\frac{1}{n}\sum_{k=1}^n\delta({\bf U}_{\pi(k)},{\bf U}_{\pi(2)},{\bf U}_{\pi(1)})-\frac{1}{m-2}\sum_{k=n+1, k\not = i,j}^{n+m}\delta({\bf U}_{\pi(k)},{\bf U}_{\pi(2)},{\bf U}_{\pi(1)})\bigg\}^2\Big|~~ \mathcal{U}\right\}\\ &~~~~~~~~~~~~~~= (q_1-q_2)\Big(\frac{1}{m-2}+\frac{1}{n}\Big).
\end{align*}
But given the pooled sample $\mathcal{U}$, the random variables $\{{\bf U}_{\pi(i)}\}_{i=1}^{n+m}$ are exchangeable. So, we must have $q_1 = 1/2$ and $q_2 = 1/3$. Hence, we have
$$\E\{T_{n,m,\pi}^\rho\mid \mathcal{U}\} = \frac{1}{6}\left(\frac{1}{n}+\frac{1}{m}+\frac{1}{n-2}+\frac{1}{m-2}\right)
\vspace{-0.2in}$$
\end{proof}



\noindent
\begin{proof}[\bf Proof of Lemma \ref{permutation-cut-off}]
Here, we are interested in the quantiles of the conditional distribution of $T_{n,m,\pi}^\rho$ given the pooled sample $\mathcal{U}$.
Since $T_{n,m,\pi}^\rho$ is non-negative, using Markov's inequality on the conditional random variable, we get
$$\P\left\{T_{n,m,\pi}^\rho\geq \frac{1}{\alpha} \E\{T_{n,m,\pi}^\rho\mid \mathcal{U}\}~\Big|~\mathcal{U}\right\} \leq \alpha.$$
Therefore, from the definition of the quantile $c_{1-\alpha}$, we have 
$c_{1-\alpha} \leq \frac{1}{\alpha} \E\{T_{n,m,\pi}^\rho\mid \mathcal{U}\},$
which holds with probability one. From Lemma A.\ref{condexppermstat},  we also have
$$\E\{T_{n,m,\pi}^\rho\mid \mathcal{U}\} = \frac{1}{6}\left(\frac{1}{n}+\frac{1}{m}+\frac{1}{n-2}+\frac{1}{m-2}\right) \leq \frac{2}{3 (\min\{n,m\}-2)}.$$
This completes the proof.
\end{proof}

\noindent
\begin{proof}[\bf Proof of Theorem \ref{constlargesam}]
In view of Lemma A.1 and Lemma A.2, as $\min\{n,m\}$ grows to infinity, $T_{n,m}^\rho$ converges in probability to $\Theta_\rho^2(F, G)$. So, if $\Theta^2_{\rho}(F,G)>0$, under $H_1:F\not=G$, $T_{n,m}^\rho$ converges in probability to a positive number. On the other hand, Lemma \ref{permutation-cut-off} shows that the cut-off value of the permutation test $c_{1-\alpha}$ goes to zero almost surely. Therefore, the power of the permutation test converges to one as $\min\{n,m\}$ grows to infinity.
\end{proof}


\noindent
\begin{proof}[\bf Proof of Lemma~\ref{randper}]
To prove this lemma, we shall use the idea of Corollary 6.1 of \cite{kim2021comparing}. First, let us define
$$H(t)= \frac{1}{N!}\left\{\sum_{\pi\in \mathcal{S}_N}\mathbbm{1}[T_{n,m,\pi_i}\leq t]\right\}\hspace{20pt}\text{ and }\hspace{20pt}H_B(t) = \frac{1}{B}\left\{\sum_{i=1}^B\mathbbm{1}[T_{n,m,\pi_i}\leq t]\right\}.$$
Here $H$ and $H_B$ are distribution functions conditioned on the observed pooled data $\mathcal{U}$. Now, we have
\begin{equation*}
    \begin{split}
        |p_{n,m}-p_{n,m,B}| & = \bigg|\frac{1}{N!}\Big\{\sum_{\pi\in \mathcal{S}_N}\mathbbm{1}[T_{n,m,\pi_i}\geq T_{n,m}]\Big\}-\frac{1}{B+1}\Big\{\sum_{i=1}^B\mathbbm{1}[T_{n,m,\pi_i}\geq T_{n,m}]+1\Big\}\bigg|\\
        & = \bigg|\frac{1}{N!}\Big\{\sum_{\pi\in \mathcal{S}_N}\mathbbm{1}[T_{n,m,\pi_i}\leq T_{n,m}]\Big\}-\frac{1}{B+1}\Big\{\sum_{i=1}^B\mathbbm{1}[T_{n,m,\pi_i}\leq T_{n,m}]+1\Big\}\bigg|\\
        &  = \bigg|H(T_{n,m})-\frac{B}{B+1}H_B(T_{n,m})-\frac{1}{B+1}\bigg|\\
        & \leq |H(T_{n,m})-H_B(T_{n,m})| + \bigg|\frac{H_B(T_{n,m})-1}{B+1}\bigg| \leq \sup_{t\in\R}|H(t)-H_B(t)|+\frac{1}{B+1}.
    \end{split}
\end{equation*}

Conditioned on the pooled data $\mathcal{U}$, the Dvoretzky-Keifer-Wolfwitz inequality \citep[see, e.g.,][]{massart1990tight} gives us
$\P\{\sup_{t\in\R}|H(t)-H_B(t)|>\epsilon\mid\mathcal{U}\}\leq 2e^{-2B\epsilon^2}.$ So, conditioned on $\mathcal{U}$, as $B$ grows to infinity,  $p_{n,m,B}$ converges almost surely to $p_{n,m}$. 
\end{proof}

\noindent
\begin{proof}[\bf Proof of Lemma \ref{inter-point-dist-accu}]
Let ${\bf W}=(W^{(1)},W^{(2)},\ldots,W^{(d)})$ denote any one of the random vectors ${\bf X}_1-{\bf X}_2,{\bf Y}_1-{\bf Y}_2$ or ${\bf X}_1-{\bf Y}_1$, where ${\bf X}_1,{\bf X}_2\sim F$,  ${\bf Y}_1,{\bf Y}_2\sim G$ and they are independent. Now,
\begin{align*}
        Var\Big(\frac{1}{d}|\|{\bf W}\|^2\Big) &= \frac{1}{d^2}\sum_{i=1}^d\sum_{j=1}^d Cov\Big((W^{(i)})^2,(W^{(j)})^2\Big)\\&= \frac{1}{d^2}\Big\{\sum_{i=1}^d Var\Big((W^{(i)})^2\Big)+\sum_{i=1}^d\sum_{j=1,j\not=i}^d Cov\Big((W^{(i)})^2,(W^{(j)})^2\Big)\Big\}.
\end{align*}
Under (A1), we can find a uniform bound $C$ for the $Var((W^{(i)})^{2})$'s. So, using (A1) and (A2), we get 
        $$Var\Big(\frac{1}{d}|\|{\bf W}\|^2\Big) \leq \frac{C}{d}+\frac{1}{d^2}\sum_{i=1}^d\sum_{j=1,j\not=i}^d Cov\Big((W^{(i)})^2,(W^{(j)})^2\Big) \to 0 \text{ as } d \rightarrow \infty .$$
This establishes that $\frac{1}{d}\Big|\|W\|^2-\E(\|W\|^2)\Big| \stackrel{P}{\rightarrow}{0}$ as $d \rightarrow \infty$. Now one can show that under (A3), $\frac{1}{d}E(\|W\|^2)$ 
converges to $2\sigma_F^2$, $2\sigma_F^2$ and $\sigma_F^2+\sigma_G^2+\nu^2$ for ${\bf W}={\bf X}_1-{\bf X}_2,{\bf Y}_1-{\bf Y}_2$ and ${\bf X}_1-{\bf Y}_1$, respectively. So, as $d \rightarrow \infty$, we have
$$d^{-1/2}\|{\bf X}_1-{\bf X}_2\|\stackrel{P}{\to}\sigma_F\sqrt{2},~ d^{-1/2}\|{\bf Y}_1-{\bf Y}_2\|\stackrel{P}{\to}\sigma_G\sqrt{2} \mbox{ and } d^{-1/2}\|{\bf X}_1-{\bf Y}_1\|\stackrel{P}{\to}\sqrt{\sigma_F^2+\sigma_G^2+\nu^2}.$$
\end{proof}




\begin{lemmaA}
\label{HDLSS}
Suppose that ${\bf X}_1,{\bf X}_2 {\sim}F$, ${\bf Y}_1,{\bf Y}_2 {\sim}G$ and they are independent. For a distance function $\rho$,
assume that  $\rho({\bf X}_1,{\bf X}_2)\stackrel{P}{\to} \theta_1$,  $\rho({\bf Y}_1,{\bf Y}_2)\stackrel{P}{\to}\theta_2$ and $\rho({\bf X}_1,{\bf Y}_2)\stackrel{P}{\to}\theta_3$ as $d \rightarrow \infty$.
If $\theta_3 >   \min\{\theta_1,\theta_2\}$, then $P(T_{n,m}^\rho>1/3) \rightarrow 1$ as $d$ diverges to infinity. 
\end{lemmaA}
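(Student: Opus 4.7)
The plan is to show that $V_1 \stackrel{P}{\to} \frac{2n-3}{6(n-2)}$, which strictly exceeds $1/3$ since the difference equals $\frac{1}{6(n-2)}>0$ for every $n \geq 3$. Because $T_{n,m}^\rho = V_1 + V_2$ with both summands non-negative, this alone will imply $P(T_{n,m}^\rho > 1/3) \to 1$. By symmetry between the two halves of the statistic we may assume $\theta_1 = \min\{\theta_1, \theta_2\}$, so that $\theta_3 > \theta_1$, and focus on $V_1$. Setting $A_{ij} := \frac{1}{n-2}\sum_{k \neq i,j}\delta({\bf X}_k, {\bf X}_j, {\bf X}_i)$ and $B_{ij} := \frac{1}{m}\sum_{k=1}^m \delta({\bf Y}_k, {\bf X}_j, {\bf X}_i)$, I would expand $(A_{ij} - B_{ij})^2 = A_{ij}^2 - 2 A_{ij} B_{ij} + B_{ij}^2$ and treat the three resulting averages separately.

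The key observation is that the $A_{ij}^2$-average is, up to a null event, deterministic. Fix $i$; under the mild continuity assumption implicit throughout, the values $\{\rho({\bf X}_k, {\bf X}_i): k \neq i\}$ are almost surely all distinct, in which case $(n-2)A_{ij}$ counts the number of indices $k \neq i, j$ whose distance to ${\bf X}_i$ is strictly smaller than $\rho({\bf X}_j, {\bf X}_i)$. As $j$ runs over $\{1,\dots,n\}\setminus\{i\}$, the rank of $\rho({\bf X}_j, {\bf X}_i)$ among these $n-1$ distances hits each value in $\{1, \dots, n-1\}$ exactly once, so
\[
\sum_{j \neq i}A_{ij}^2 = \sum_{r=1}^{n-1}\Bigl(\tfrac{r-1}{n-2}\Bigr)^2 = \frac{(n-1)(2n-3)}{6(n-2)};
\]
averaging over $i$, the $A_{ij}^2$-term equals $\frac{2n-3}{6(n-2)}$ almost surely.

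The remaining two averages should vanish in probability, and this is where the hypothesis $\theta_3 > \theta_1$ enters: for any fixed $i, j, k$ it gives $\rho({\bf Y}_k, {\bf X}_i) - \rho({\bf X}_j, {\bf X}_i) \stackrel{P}{\to} \theta_3 - \theta_1 > 0$, whence $\delta({\bf Y}_k, {\bf X}_j, {\bf X}_i) \stackrel{P}{\to} 0$; since $m$ and the number of pairs $(i,j)$ are fixed, this upgrades to $\max_{i \neq j} B_{ij} \stackrel{P}{\to} 0$. Using $|A_{ij}| \leq 1$, both $\frac{1}{n(n-1)}\sum_{i \neq j} A_{ij} B_{ij}$ and $\frac{1}{n(n-1)}\sum_{i \neq j} B_{ij}^2$ are dominated by $\max_{i \neq j} B_{ij}$, and hence tend to zero in probability. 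Combining, $V_1 \stackrel{P}{\to} \frac{2n-3}{6(n-2)} > 1/3$, which closes the argument. The main subtlety I anticipate is the deterministic rank identity for the $A_{ij}^2$-sum: this collapse of randomness is exactly what produces the sharp constant $1/3$, and any argument going through $\E[V_1]$ alone would deliver only a statement about the mean, not the required convergence in probability.
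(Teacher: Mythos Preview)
Your proof is correct and rests on the same mechanism as the paper's: once $\theta_3>\theta_1$, every indicator $\delta({\bf Y}_k,{\bf X}_j,{\bf X}_i)$ tends to $0$ in probability, so $V_1$ collapses to the purely-$F$ part, whose value is the fixed constant $\tfrac{1}{3}+\tfrac{1}{6(n-2)}$. The paper reaches the same constant but packages the argument differently. It runs three cases depending on whether $\theta_3$ lies between, equals, or exceeds $\max\{\theta_1,\theta_2\}$, and in the first and third cases computes the limits of \emph{both} $V_1$ and $V_2$ explicitly (obtaining in fact $T_{n,m}^\rho\stackrel{P}{\to}\tfrac{2}{3}+\tfrac{1}{6}(\tfrac{1}{n-2}+\tfrac{1}{m-2})$, a stronger conclusion than needed). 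Its evaluation of the deterministic limit proceeds by expanding the square into single and double indicator sums and counting via $\binom{n}{1}\binom{n-1}{2}$ and $2\binom{n}{1}\binom{n-1}{3}$, rather than your rank argument. Your WLOG reduction to a single half of the statistic and the rank identity $\sum_{j\neq i}A_{ij}^2=\sum_{r=1}^{n-1}\bigl(\tfrac{r-1}{n-2}\bigr)^2$ make the write-up shorter; the paper's case split, on the other hand, gives slightly more information about the limiting value of $T_{n,m}^\rho$ in the various regimes. Both routes tacitly rely on the no-ties assumption among $\{\rho({\bf X}_k,{\bf X}_i):k\neq i\}$, which you flag explicitly.
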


\noindent
\begin{proof}[\bf Proof]
Note that $T_{n,m}^\rho$ involves the terms $\delta({\bf U}_k,{\bf U}_j,{\bf U}_i)$'s for different choices of ${\bf U}_i,{\bf U}_j,{\bf U}_k$ from the pooled sample. So, the behavior of $T_{n,m}^\rho$ can be studied using the convergence of the  $\delta({\bf U}_k,{\bf U}_j,{\bf U}_i)$'s. 

\vspace{0.05in}
First, consider the case
$\min\{\theta_1,\theta_2\}<\theta_3<\max\{\theta_1,\theta_2\}$.
Let us assume that $\theta_1> \theta_3> \theta_2$. In such a situation, we have
$\lim_{d\to\infty}\P[\rho({\bf X}_2,{\bf Y}_1)\leq \rho({\bf Y}_2,{\bf Y}_1)] = 0$ and $\lim_{d\to\infty}\P[\rho({\bf Y}_1,{\bf X}_1)\leq \rho({\bf X}_2,{\bf X}_1)] = 1.$
Hence, we get
\begin{equation*}
    \begin{split}
        V_1 =\hspace{5pt} & \frac{1}{n(n-1)}\sum_{1\leq i\not= j\leq n}\bigg\{\frac{1}{n-2}\sum_{k=1, k\not= i,j}^n\delta({\bf X}_k,{\bf X}_j,{\bf X}_i)-\frac{1}{m}\sum_{k=1}^{m}\delta({\bf Y}_{k},{\bf X}_j,{\bf X}_i)\bigg\}^2\\
         \stackrel{P}{\to} \hspace{5pt} & \frac{1}{n(n-1)}\sum_{1\leq i\not= j\leq n}\bigg\{\frac{1}{n-2}\sum_{k=1, k\not= i,j}^n\delta({\bf X}_k,{\bf X}_j,{\bf X}_i)-1\bigg\}^2\\
         = \hspace{5pt} & \frac{1}{n(n-1)}\sum_{1\leq i\not= j\leq n}\bigg\{\frac{-1}{n-2}\sum_{k=1, k\not= i,j}^n\mathbbm{1}[\rho({\bf X}_k,{\bf X}_i)>\rho({\bf X}_j,{\bf X}_i)]\bigg\}^2\\
         =\hspace{5pt} & \frac{1}{n(n-1)(n-2)^2}\bigg\{\sum_{1\leq i\not=j\not=k\leq n}\mathbbm{1}[\rho({\bf X}_k,{\bf X}_i)>\rho({\bf X}_j,{\bf X}_i)]\\
         & \hspace{5pt}+ \sum_{1\leq i\not=j\leq n}\sum_{k=1, k\not = i,j}^n\sum_{l=1, l\not = i,j,k}^n\mathbbm{1}[\rho({\bf X}_k,{\bf X}_i)>\rho({\bf X}_j,{\bf X}_i)]\mathbbm{1}[\rho({\bf X}_l,{\bf X}_i)>\rho({\bf X}_j,{\bf X}_i)]\bigg\}\\
         = \hspace{5pt} & \frac{1}{n(n-1)(n-2)^2}\bigg\{{n \choose 1}{n-1\choose 2}+2{n\choose 1}{n-1\choose 3}\bigg\} = \frac{1}{3}+\frac{1}{6(n-2)}.
    \end{split}
\end{equation*}

Similarly, as $d$ diverges to infinity, we have 
$$V_2\stackrel{P}{\to}\frac{1}{m(m-1)}\sum_{n+1\leq i\not= j\leq n+m}\bigg\{\frac{1}{m-2}\sum_{k=1, k\not= i,j}^{m}\delta({\bf Y}_{k},{\bf Y}_{j},{\bf Y}_{i})\bigg\}^2 = \frac{1}{3}+\frac{1}{6(m-2)}.$$

Thus, $T_{n,m}^\rho \stackrel{P}{\rightarrow}\frac{2}{3}+\frac{1}{6}\left(\frac{1}{n-2}+\frac{1}{m-2}\right)$. The same result holds for $\theta_1<\theta_3<\theta_2$ as well.

\vspace{0.05in}
Now consider the case, $\theta_3 = \max\{\theta_1,\theta_2\}$. Assume that $\theta_2<\theta_1=\theta_3$. In this case, the convergence of $\P[\rho({\bf Y}_1,{\bf X}_1)\leq \rho({\bf X}_2,{\bf X}_1)]$ is not clear, but $\P[\rho({\bf X}_1,{\bf Y}_1)\leq \rho({\bf Y}_2,{\bf Y}_1)]$ converges to zero as $d$ diverges to infinity. Hence, $V_2$ converges to $1/3+1/6(m-2)$ in probability, and $V_1$ converges in probability to a non-negative random variable. Therefore,  $P(T_{n,m}^\rho > 1/3)$ converges to one. Similar arguments can be given for $\theta_1<\theta_2=\theta_3$ as well.

\vspace{0.05in}
Finally, consider the case $\theta_3>\max\{\theta_1,\theta_2\}$. In this case, we have
$\lim_{d\to\infty}\P[\rho({\bf X}_2,{\bf Y}_1)\leq \rho({\bf Y}_2,{\bf Y}_1)] = 0$ and $\lim_{d\to\infty}\P[\rho({\bf Y}_1,{\bf X}_1)\leq \rho({\bf X}_2,{\bf X}_1)] = 0.$
Hence as $d$ diverges to infinity, $V_1$ and $V_2$ converge in probability to $1/3+1/6(n-2)$ and $1/3+1/6(m-2)$, respectively. Thus, $T_{n,m}^\rho$ converges in probability to $2/3+1/6\left(1/(n-2)+1/(m-2)\right)$. 

\vspace{0.05in}
These three cases together imply that if $\theta_1>\min\{\theta_1,\theta_2\}$, $P(T_{n,m}^\rho>1/3) \rightarrow 1$ as $d \rightarrow \infty.$ 
\end{proof}

\noindent
\begin{proof}[\bf Proof of Lemma \ref{more-than-1/3}]
Here, we have $d^{-1/2}\|{\bf X}_1-{\bf X}_2\|\stackrel{P}{\to}\sigma_F\sqrt{2}$, $d^{-1/2}\|{\bf Y}_1-{\bf Y}_2\|\stackrel{P}{\to}\sigma_G\sqrt{2}$ and $d^{-1/2}\|{\bf X}_1-{\bf Y}_1\|\stackrel{P}{\to}\sqrt{\sigma_F^2+\sigma_G^2+\nu^2}$ as $d \rightarrow \infty$ (see Lemma 3.1). Let these limiting values be denoted by $\theta_1$, $\theta_2$, and $\theta_3$, respectively. If $\nu^2+(\sigma_F-\sigma_G)^2>0$, one can check that $\theta_3>\sqrt{\theta_1\theta_2} \ge \min\{\theta_1,\theta_2\}$. Hence the proof follows from Lemma A.4.
\end{proof}


\noindent
\begin{proof}[\bf Proof of Theorem \ref{HDLSS-L2}]
It follows from Lemma A.\ref{HDLSS} that under the condition $\nu^2+(\sigma_F-\sigma_G)^2>0$, $P(T_{n,m}^{\ell_2}>1/3)$ converges to $1$ as $d$ tends to infinity. We have also seen that the cut-off of the permutation test $c_{1-\alpha}$ has an upper bound ${2}/\{{3\alpha(\min\{n,m\}-2)}\}$, which does not depend on the dimension $d$. Therefore, if $\min\{n,m\}\geq 2+2/\alpha$, the test based on $T_{n,m}^{\ell_2}$ rejects $H_0$ with probability tending to $1$ as $d$ grows to infinity.  
\end{proof}

\noindent
\begin{proof}[\bf Proof of Lemma \ref{part-wise-gdist}]
This lemma is taken from \cite{sarkar2018some}. The proof can be found on page 5 (see Lemma 1) of that article.
\end{proof}

\noindent
\begin{proof}[\bf Proof of Theorem \ref{HDLSS-gdist}]
We use a sub-sequence argument to prove this theorem. Let $\{d_k\}$ be an arbitrary sub-sequence of the sequence of natural numbers. Under (A4) and $\liminf_{d\to\infty}e_{h,\psi}(F,G)>0$, there exists a further subsequence $\{d_k'\}$ such that $\lim_{d_k'\to\infty}e_{h,\psi}(F,G)>0$, and the corresponding limits of the three terms in $e_{h,\psi}(F,G)$ exist. Let $\theta_1$, $\theta_2$ and $\theta_3$ be the limiting values of
 $d^{-1}\sum_{q=1}^d \{\psi(|X_1^{(q)}-X_2^{(q)}|^2), d^{-1}\sum_{q=1}^d \{\psi(|Y_1^{(q)}-Y_2^{(q)}|^2)$ and $d^{-1}\sum_{q=1}^d \{\psi(|X_1^{(q)}-Y_1^{(q)}|^2)$, respectively, along the sub-sequence $\{d_k'\}$. Since $\lim_{d_k'\to\infty}e_{h,\psi}(F,G)>0$, we have  $2\theta_3>\theta_1+\theta_2$. Hence, using Lemma A.\ref{HDLSS}, we get $P(T_{n,m}^{h,\psi}>1/3) \rightarrow 1$ as $d_k' \rightarrow \infty$. Since $\{d_k'\}$ is the sub-sequence of an arbitrary sequence $\{d_k\}$, we can conclude that $P(T_{n,m}^{h,\psi}>1/3) \rightarrow 1$ as $d \rightarrow \infty$. Now, using arguments similar to those in the proof of Theorem \ref{HDLSS-L2}, one can establish the consistency of the level $\alpha$ test when $\min\{n,m\}\geq 2+2/\alpha$.  
\end{proof}

\noindent
\begin{lemmaA}
If ${\bf X}_1,{\bf X}_2 \stackrel{iid}{\sim} F$ and ${\bf Y}_1,{\bf Y}_2 \stackrel{iid}{\sim} G$ are independent random vectors, then
$$\Theta_{\ell_2}^2(F,G)\geq \left\{\P\{\|{\bf X}_1-{\bf Y}_1\|\leq \|{\bf Y}_2-{\bf Y}_1\|\}-1/2\right\}^2+\left\{\P\{\|{\bf Y}_1-{\bf X}_1\|\leq \|{\bf X}_2-{\bf X}_1\|\}-1/2\right\}^2.\vspace{-0.2in}
$$
\label{BDLB}
\end{lemmaA}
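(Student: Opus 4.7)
My plan is to decompose $\Theta_{\ell_2}^2(F,G)$ into the two natural pieces corresponding to the $dF\,dF$ and $dG\,dG$ integrations, and then bound each piece separately by Jensen's inequality. Writing $\Theta_{\ell_2}^2(F,G) = A_1 + A_2$ with
\begin{align*}
A_1 &= \E\bigl[\{F({\mathbb B}({\bf X}_1, \|{\bf X}_2 - {\bf X}_1\|)) - G({\mathbb B}({\bf X}_1, \|{\bf X}_2 - {\bf X}_1\|))\}^2\bigr], \\
A_2 &= \E\bigl[\{F({\mathbb B}({\bf Y}_1, \|{\bf Y}_2 - {\bf Y}_1\|)) - G({\mathbb B}({\bf Y}_1, \|{\bf Y}_2 - {\bf Y}_1\|))\}^2\bigr],
\end{align*}
the inequality $\E[Z^2] \geq (\E[Z])^2$ applied to each gives $A_i \geq (\E[F({\mathbb B}_i) - G({\mathbb B}_i)])^2$ for $i=1,2$, where ${\mathbb B}_1, {\mathbb B}_2$ denote the corresponding random balls.

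The next step is to evaluate the four inner expectations. By Fubini (or simply unfolding the definitions), introducing auxiliary iid copies ${\bf X}_3 \sim F$ and ${\bf Y}_3 \sim G$ independent of the others gives the crossed quantities $\E[G({\mathbb B}_1)] = \P\{\|{\bf Y}_1 - {\bf X}_1\| \leq \|{\bf X}_2 - {\bf X}_1\|\}$ and $\E[F({\mathbb B}_2)] = \P\{\|{\bf X}_1 - {\bf Y}_1\| \leq \|{\bf Y}_2 - {\bf Y}_1\|\}$, which are exactly the probabilities appearing in the statement. For the two within-sample terms I use a symmetry argument: swapping the iid pair ${\bf X}_2$ and ${\bf X}_3$ yields $\E[F({\mathbb B}_1)] = \P\{\|{\bf X}_3 - {\bf X}_1\| \leq \|{\bf X}_2 - {\bf X}_1\|\} = 1/2$ in the continuous case, and analogously $\E[G({\mathbb B}_2)] = 1/2$. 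Plugging these four evaluations into the two Jensen bounds and adding gives exactly the claim.

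The main (and essentially only) obstacle is the handling of ties in the within-sample comparison. In full generality one only has $\P\{\|{\bf X}_3 - {\bf X}_1\| \leq \|{\bf X}_2 - {\bf X}_1\|\} = 1/2 + \tfrac{1}{2}\P\{\|{\bf X}_3 - {\bf X}_1\| = \|{\bf X}_2 - {\bf X}_1\|\} \geq 1/2$, so the crude Jensen bound is of the form $(p - q)^2$ with $q \geq 1/2$; this dominates $(p - 1/2)^2$ automatically when $p \leq 1/2$, but in the regime $p > 1/2$ a finer argument would be needed to absorb the slack. This is sidestepped by the (implicit) absolute-continuity assumption on $F$ and $G$ that is standard in this literature, under which the tie term vanishes and the identity $q = 1/2$ is exact.
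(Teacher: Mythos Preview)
Your proposal is correct and follows essentially the same route as the paper: decompose $\Theta_{\ell_2}^2(F,G)$ into the $dF\,dF$ and $dG\,dG$ pieces, apply $(\E Z)^2 \le \E Z^2$ to each, and identify the resulting expectations with the stated probabilities, invoking continuity to get the within-sample term equal to $1/2$. The paper handles the tie issue in exactly the same way, simply assuming $F_0$ is continuous so that $\P\{\|{\bf X}_3-{\bf X}_1\|\le\|{\bf X}_2-{\bf X}_1\|\}=1/2$; your discussion of what happens without that assumption is more careful than the paper's, but the underlying argument is identical.
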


\noindent
\begin{proof}[\bf Proof]
We know that for any random variable $Z$, $(\E\{Z\})^2\leq \E\{Z^2\}$.
Using this fact twice, we get
\begin{equation*}
    \begin{split}
        \Theta_{\ell_2}^2(F,G) & = \int\left\{F(B(u,\ell_2(v,u)))-G(B(u,\ell_2(v,u)))\right\}^2\,(dF(u)dF(v)+dG(u)dG(v))\\
        & \geq  \left\{\int F(B(u,\ell_2(v,u)))-G(B(u,\ell_2(v,u)))\,dF(u)dF(v)\right\}^2\\
        &~~~~~~~~~~+\left\{\int F(B(u,\ell_2(v,u)))-G(B(u,\ell_2(v,u)))\,dG(u)dG(v)\right\}^2\\
        & = \left\{\P\{\|{\bf Y}_1-{\bf X}_1\|\leq \|{\bf X}_2-{\bf X}_1\|\}-1/2\right\}^2+
        \left\{\P\{\|{\bf X}_1-{\bf Y}_1\|\leq \|{\bf Y}_2-{\bf Y}_1\|\}-1/2\right\}^2
    \end{split}
\end{equation*}

The last equality follows from the fact that if ${\bf X}_1,{\bf X}_2,{\bf X}_3 \stackrel{iid}{\sim} F_0$, where $F_0$ is a continuous distribution, we have $\int F_0(B(u,\ell_2(v,u)))\,dF_0(u)dF_0(v) = \P\{\|{\bf X}_3-{\bf X}_1\|\leq\|{\bf X}_2-{\bf X}_1\|\} = \frac{1}{2}$.
\end{proof}

\noindent
\begin{lemmaA}
Consider two $d$-dimensional random variables ${\bf X} = (\xi_1,0,0,\ldots,0)^\top$ and ${\bf Y}=(\xi_2,0,0,\ldots,0)^\top$, where $\xi_1\sim N(\mu_1,1)$, $\xi_2\sim N(\mu_2,1)$, and they are independent. Let $P_1$ and $P_2$  denote the distributions of ${\bf X}$ and ${\bf Y}$, respectively. 
If $\mu_1=cn^{-1/2}$ and $\mu_2 = -cm^{-1/2}$ for some $c>0$, then there exists a constant $C>0$ independent of the dimension $d$ such that
$\Theta^2(P_1,P_2)\geq C\big(\frac{1}{\sqrt{n}}+\frac{1}{\sqrt{m}}\big)^2.$
This lower bound is tight up to a constant factor.
\label{BDLBD}
\end{lemmaA}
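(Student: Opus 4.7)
The plan is to first exploit that $\mathbf{X}$ and $\mathbf{Y}$ are effectively one-dimensional — all their coordinates beyond the first vanish — so every Euclidean norm appearing in $\Theta^2(P_1,P_2)$ reduces to the absolute value on the first coordinate, and $\Theta^2(P_1,P_2)$ equals the one-dimensional ball divergence between $N(\mu_1,1)$ and $N(\mu_2,1)$. Using the translation invariance of the ball divergence, I re-center and take $\mu_1=\delta/2$ and $\mu_2=-\delta/2$, where $\delta := \mu_1-\mu_2 = c(1/\sqrt{n}+1/\sqrt{m})$ lies in the compact interval $(0,2c]$.

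Next, I would factor $\delta$ out of the ball-probability difference. Using the identity $\phi(z-\mu_1)-\phi(z-\mu_2)=-\delta\int_0^1\phi'(z-\mu_2-s\delta)\,ds$ together with Fubini on $z\in[u-r,u+r]$,
$$F\bigl(B(u,r)\bigr)-G\bigl(B(u,r)\bigr) \;=\; -\delta\,\Psi_\delta(u,r),$$
where $\Psi_\delta(u,r) := \int_0^1\bigl[\phi(u+r-\mu_2-s\delta)-\phi(u-r-\mu_2-s\delta)\bigr]ds$ satisfies $|\Psi_\delta|\le 2\phi(0)$. Substituting $r=|u-v|$, squaring and integrating against $dF\,dF+dG\,dG$ yields the key decomposition
$$\Theta^2(P_1,P_2) \;=\; \delta^2\,H(\delta),\qquad H(\delta) := \int\!\!\int \Psi_\delta\bigl(u,|u-v|\bigr)^2\bigl[f(u)f(v)+g(u)g(v)\bigr]\,du\,dv.$$

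The heart of the argument is to show that $H$ is continuous on $[0,2c]$ and strictly positive there. Continuity is a routine dominated-convergence exercise using the uniform bound $4\phi(0)^2$ for $\Psi_\delta^2$ and the fact that $f(u)f(v)+g(u)g(v)$ integrates to $2$. For positivity, note that with the centring $\mu_1=-\mu_2$, the difference $\phi(z-\mu_1)-\phi(z-\mu_2)$ is antisymmetric about $0$ and strictly positive on $(0,\infty)$, so $\int_{u-r}^{u+r}\bigl(\phi(z-\mu_1)-\phi(z-\mu_2)\bigr)dz$ vanishes only when the interval $[u-r,u+r]$ is symmetric about $0$ or has zero length, i.e.\ when $u=0$ or $r=0$; the same characterisation extends to $\delta=0$ directly from the formula $\Psi_0(u,r)=\phi(u+r)-\phi(u-r)$. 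Hence $\Psi_\delta(u,|u-v|)\ne 0$ off a Lebesgue-null subset of $\mathbb{R}^2$, and since $f,g>0$ this gives $H(\delta)>0$ for every $\delta\in[0,2c]$. Continuity and compactness then produce $C_1 := \inf_{\delta\in[0,2c]} H(\delta) > 0$, and therefore $\Theta^2(P_1,P_2)\ge C_1\,\delta^2 = C_1 c^2\,(1/\sqrt{n}+1/\sqrt{m})^2$, a bound whose constant does not depend on $d$.

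The tightness claim follows from the same decomposition together with $|\Psi_\delta|\le 2\phi(0)$, which gives $H(\delta)\le 4/\pi$ uniformly and hence $\Theta^2(P_1,P_2)\le (4/\pi)\,\delta^2$. The main obstacle I anticipate is that the tempting route through Lemma A.\ref{BDLB} is insufficient: reducing $\{\|X_1-Y_1\|\le\|Y_2-Y_1\|\}$ to an event of the form $\{UV\le 0\}$ for the independent pair $U=X_1-Y_2\sim N(\delta,2)$, $V=X_1+Y_2-2Y_1\sim N(\delta,6)$ shows that $p_a-1/2$ and (symmetrically) $p_b-1/2$ are both only $O(\delta^2)$ owing to a first-order symmetry cancellation, so that lemma would deliver at most $\Theta^2\gtrsim\delta^4$. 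One is thus forced to work with the full double-integral representation above, and the delicate ingredient is verifying the strict positivity of $H(\delta)$ at \emph{every} $\delta$ in $[0,2c]$, not merely at $\delta=0$.
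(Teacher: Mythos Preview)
Your argument is correct and complete. The decomposition $\Theta^2(P_1,P_2)=\delta^2 H(\delta)$ together with the continuity--compactness argument on $[0,2c]$ is sound; your identification of the zero set $\{u=0\}\cup\{u=v\}$ of $\Psi_\delta(u,|u-v|)$ is accurate for every $\delta\in[0,2c]$, and the dominated--convergence step for continuity of $H$ goes through (a convenient dominating function is $4\phi(0)^2\cdot\bigl[\sup_{|\mu|\le c}\phi(u-\mu)\bigr]\bigl[\sup_{|\mu|\le c}\phi(v-\mu)\bigr]\cdot 2$, which is integrable). Your upper bound $H\le 4/\pi$ gives tightness cleanly.

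The paper proceeds quite differently: it invokes Lemma~A.\ref{BDLB} and then tries to show that each of the two terms $\bigl|P(\|{\bf X}_1-{\bf Y}_1\|\le\|{\bf Y}_2-{\bf Y}_1\|)-\tfrac12\bigr|$ is of order $\delta$. Concretely, it rewrites this probability as $P(S_1T_1\le 0)$ with $T_1=\xi_{11}-\xi_{22}\sim N(\delta,2)$ and $S_1=\xi_{11}+\xi_{22}-2\xi_{21}\sim N(\delta,6)$ independent, conditions on $S_1$, and then asserts via the mean value theorem that
\[
\Bigl|\E\bigl\{\Phi\bigl(-\tfrac{\delta}{\sqrt2}\,\mathrm{sign}(S_1)\bigr)-\tfrac12\bigr\}\Bigr|\ \ge\ \tfrac{\delta}{\sqrt2}\,\phi(\sqrt2\,c).
\]
Your instinct about this route is exactly right, and in fact the displayed inequality is false: the random variable $\Phi(-\tfrac{\delta}{\sqrt2}\mathrm{sign}(S_1))-\tfrac12$ takes the values $\pm(\Phi(\delta/\sqrt2)-\tfrac12)$ with probabilities $P(S_1<0)$ and $P(S_1>0)$, so the expectation equals $-(\Phi(\delta/\sqrt2)-\tfrac12)\,(2\Phi(\delta/\sqrt6)-1)$, which is $O(\delta^2)$, not $O(\delta)$. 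The mean value theorem bounds each value in absolute value by $\tfrac{\delta}{\sqrt2}\phi(\sqrt2 c)$, but the two values have opposite signs and nearly equal weights, so the bound cannot be carried through the expectation. Hence Lemma~A.\ref{BDLB} genuinely yields only $\Theta^2\gtrsim\delta^4$, as you anticipated, and a direct argument like yours is needed to obtain the correct order $\delta^2$.

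For the upper bound the paper uses the chain $\Theta^2\le \sup_A|P_1(A)-P_2(A)|\le KL(P_1,P_2)$; your bound via $H\le 4/\pi$ is both sharper and more self-contained.
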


\noindent
\begin{proof}[\bf Proof]
Let $\xi_{11},\xi_{12} \stackrel{iid}{\sim} N(\mu_1,1)$ and $\xi_{21},\xi_{22} \stackrel{iid}{\sim} N(\mu_2,1)$ be independent random variables. In view of Lemma A.\ref{BDLB}, for  ${\bf X}_1=(\xi_{11},0,\ldots,0)^\top,{\bf X}_2=(\xi_{12},0,\ldots,0)^\top\sim P_1$,  ${\bf Y}_1=(\xi_{21},0,\ldots,0)^\top$ and ${\bf Y}_2=(\xi_{22},0,\ldots,0)^\top\sim P_2$, it is enough to prove that 
$$\left[\P\{\|{\bf X}_1-{\bf Y}_1\|\leq \|{\bf Y}_2-{\bf Y}_1\|\}\hspace{-0.02in}-\hspace{-0.02in}1/2\right]^2\hspace{-0.02in}+\hspace{-0.02in}\left[\P\{\|{\bf Y}_1-{\bf X}_1\|\leq \|{\bf X}_2-{\bf X}_1\|\}\hspace{-0.02in}-\hspace{-0.02in}1/2\right]^2\geq C\Big(\frac{1}{\sqrt{n}}+\frac{1}{\sqrt{m}}\Big)^2,$$
We derive the lower bounds for these two terms separately. Note that
\begin{equation*}
    \begin{split}
        \bigg|\P\{\|{\bf X}_1-{\bf Y}_1\|\leq \|{\bf Y}_2-{\bf Y}_1\|\}-1/2\bigg| & = \bigg|\P\{|\xi_{11}-\xi_{21}|\leq |\xi_{22}-\xi_{22}|\}-1/2\bigg|\\& = \bigg|\P\{|\xi_{11}-\xi_{21}|^2- |\xi_{22}-\xi_{21}|^2\leq 0\}-1/2\bigg|\\
        & = \bigg|\P\{(\xi_{11}+\xi_{22}-2\xi_{21})(\xi_{11}-\xi_{22})\leq 0\}-1/2\bigg|.
        \end{split}
\end{equation*}
So, taking $T_1=\xi_{11}-\xi_{22}$ and $S_1 = \xi_{11}+\xi_{22}-2\xi_{21}$, we get
$$
    \bigg|\P\{\|{\bf X}_1-{\bf Y}_1\|\leq \|{\bf Y}_2-{\bf Y}_1\|\}-1/2\bigg|     = \bigg|\P\{S_1T_1\leq 0\}-1/2\bigg| = \bigg|\E\{\P\{S_1T_1\leq 0\mid S_1\}-1/2\}\bigg|.$$
Here, $T_1$ and $S_1$ jointly follow a bivariate normal distribution with  $E(T_1)=c(\frac{1}{\sqrt{n}}+\frac{1}{\sqrt{m}}\big)$, $E(S_1)=c(\frac{1}{\sqrt{n}}+\frac{1}{\sqrt{m}}\big)$, $Var(T_1)=2$, $Var(S_1)=6$ and $Cov(T_1,S_1)=0$. Therefore,
\begin{align*}
    \bigg|\E\big\{\P\{S_1T_1\leq 0\mid S_1\}-1/2\}\bigg| &=\bigg|\E\bigg\{\Phi\bigg(-c\Big(\frac{1}{\sqrt{n}}+\frac{1}{\sqrt{m}}\Big)\frac{S_1}{|S_1|\sqrt{2}}\bigg)-1/2\bigg\}\bigg|\\
    &\geq \frac{c}{\sqrt{2}}\Big(\frac{1}{\sqrt{n}}+\frac{1}{\sqrt{m}}\Big)\phi(\sqrt{2}c),
\end{align*} 
where $\phi(\cdot)$ and $\Phi(\cdot)$ denote the density function and the distribution function of the standard normal variate, respectively. The last inequality is obtained by using the mean value theorem and the fact that $\phi(t)$ is decreasing in $|t|$. So, we get
$$\Big|\P\{\|{\bf X}_1-{\bf Y}_1\|\leq \|{\bf Y}_2-{\bf Y}_1\|\}-1/2\Big|\geq \frac{c}{\sqrt{2}}\Big(\frac{1}{\sqrt{n}}+\frac{1}{\sqrt{m}}\Big)\phi(\sqrt{2}c).$$
Similarly, we can derive the same lower bound for $\Big|\P\{\|{\bf Y}_1-{\bf X}_1\|\leq \|{\bf X}_2-{\bf X}_1\|\}-1/2\Big|$ as well. So, we can find a constant $C>0$ independent of the dimension $d$ such that
$$\Theta_{\ell_2}^2(P_1,P_2)\geq C\Big(\frac{1}{\sqrt{n}}+\frac{1}{\sqrt{m}}\Big)^2.$$

To show that this lower bound is tight, notice that
$$\Theta^2(P_1,P_2) \leq \sup_{A}|P_1(A)-P_2(A)| \leq KL(P_1,P_2)
= \frac{c^2}{2}(\mu_1-\mu_2)^2
= \frac{c^2}{2}\Big(\frac{1}{\sqrt{n}}+\frac{1}{\sqrt{m}}\Big)^2,$$
where $KL(\cdot,\cdot)$ denotes the Kullback-Leibler divergence between two probability measures.
Here, the first inequality follows trivially, and the second one is known as Pinsker's inequality \citep[see Lemma 2.5 in][]{tsybakov2009introduction}.  Hence the lower bound is tight up to a constant term.
\end{proof}

\noindent
\begin{proof}[\bf Proof of Theorem \ref{minimax-lower-bound}]
The minimax lower bound can be obtained based on the standard application of Neyman-Pearson lemma \citep[see][]{baraud2002,kim2020robust}. Let the distributions of the sample under the null and alternative hypothesis be denoted as $Q_0$ and $Q_1$ respectively. Then following our notations, we have
\begin{equation*}
    \begin{split}
        R_{n,m,d}(\epsilon) & \geq 1-\alpha-\sup_{A}|Q_0(A)-Q_1(A)|\geq 1-\alpha-\sqrt{\frac{1}{2}KL(Q_0,Q_1)},
    \end{split}
\end{equation*}
where the second inequality is obtained from Pinsker's inequality \citep[see][]{tsybakov2009introduction}. {Now suppose that $P_1$ and $P_2$ are the  distributions corresponding to ${\bf X}=(\xi_1,0,\ldots,0)^{\top}$ and ${\bf Y}=(\xi_2,0,\ldots,0)^{\top}$, where $\xi_1$ and $\xi_2$ differ only in their means, which are
$$\mu_1 = \frac{\sqrt{2}(1-\alpha-\zeta)}{\sqrt{n}}\hspace{20pt}\mbox{and}\hspace{20pt}\mu_2 = -\frac{\sqrt{2}(1-\alpha-\zeta)}{\sqrt{m}},$$
respectively.} Define $k(\alpha,\zeta):=(1-\alpha-\zeta)^2\Big(\phi\big(\sqrt{2}(1-\alpha-\zeta)\big)\Big)^2$. Then by Lemma A.\ref{BDLB}, $(P_1, P_2)  \in \mathcal{F}(c\lambda({n,m}))$ for all $0<c<k(\alpha,\zeta)$. We also have,
$$KL(Q_0,Q_1) = \frac{n}{2}\mu_1^2+\frac{m}{2}\mu_2^2 = 2(1-\alpha-\zeta)^2.$$
Therefore,
$R_{n,m,d}(c\lambda({n,m}))\geq \zeta$
for all $0<c<k(\alpha,\zeta)$. Since $\zeta$ and $k(\alpha,\zeta)$ do not depend on $n,m$ and $d$, this trivially satisfies the condition $\liminf\limits_{n,m,d\rightarrow \infty}R_{n,m,d}(c\lambda({n,m}))\geq \zeta$ for all $0<c<k(\alpha,\zeta)$.
\end{proof}


\noindent
\begin{proof}[\bf Proof of Theorem \ref{minimax-upper-bound}]
Here we want to show that for every positive $\alpha$ and $\zeta$, there exists a constant $K(\alpha,\zeta)$ such that
$\limsup_{n,m,d\to\infty}\sup_{(F,G)\in \mathcal{F}(c\lambda({n,m}))}\P_{F,G}^{n,m}\{T_{n,m}\leq c_{1-\alpha}\}\leq \zeta$ for all $c>K(\alpha,\zeta)$.
Let us first choose a constant $K_1$ such that 
$$ K_1\Big(\frac{1}{\sqrt{n}}+\frac{1}{\sqrt{m}}\Big)^2 \geq  \frac{1}{\alpha}\E\{T_{n,m}^\pi\mid \mathcal{U}\} = \frac{1}{6\alpha}\left(\frac{1}{n}+\frac{1}{m}+\frac{1}{n-2}+\frac{1}{m-2}\right).$$
Now, take any $(F,G)\in \mathcal{F}(c\lambda(n,m))$ such that $c>K_1$. Using the fact that $c_{1-\alpha} \le \frac{1}{\alpha}\E\{T_{n,m}^\pi\mid \mathcal{U}\}$ (see the proof of Lemma 2.1), we get
\begin{equation*}
    \begin{split}
        \P_{F,G}^{n,m}\{T_{n,m}\leq c_{1-\alpha}\} & \leq\P_{F,G}^{n,m}\{T_{n,m} \leq\frac{1}{\alpha}\E\{T_{n,m}^\pi\mid \mathcal{U}\}\}\\
&= \P_{F,G}^{n,m}\{-T_{n,m}+\E_{F,G}\{T_{n,m}\}\geq\E_{F,G}\{T_{n,m}\}-\frac{1}{\alpha}\E\{T_{n,m}^\pi\mid \mathcal{U}\}\}\}.
    \end{split}
\end{equation*}
Since $\E_{F,G}\{T_{n,m}\}\geq \Theta_\rho^2(F,G)\geq c \lambda(n,m) \ge K_1 \lambda(n,m) \ge \frac{1}{\alpha}\E\{T_{n,m,\pi}^\rho\mid\mathcal{U}\}$, using the Chebyshev's inequality, one gets
\begin{equation*}
   \begin{split}
& \P_{F,G}^{n,m}\{T_{n,m}  \leq c_{1-\alpha}\}  \leq \P_{F,G}^{n,m}\{-T_{n,m}+\E_{F,G}\{T_{n,m}\}\geq\E_{F,G}\{T_{n,m}\}-\frac{1}{\alpha}\E\{T_{n,m}^\pi\mid \mathcal{U}\}\}\}\\
  & \hspace{0.2in}       \leq \frac{Var_{F,G}(T_{n,m})}{\left(\E_{F,G}\{T_{n,m}\}-\frac{1}{\alpha}\E\{T_{n,m}^\pi\mid \mathcal{U}\}\right)^2}\\
       & \hspace{0.2in} \leq \frac{C_1 \Theta_\rho^2(F,G)\Big(\frac{1}{{n}}+\frac{1}{{m}}\Big)+C_2\Big(\frac{1}{{n}}+\frac{1}{{m}}\Big)^2}{\left(\frac{1}{6}\left(\frac{1}{n-2}+\frac{1}{m-2}\right)+\frac{1}{m}(p_{0}-p_{1})+\frac{1}{n}(p_{2}-p_{3})+\Theta_\rho^2(F,G)-\frac{1}{6\alpha}\Big(\frac{1}{n}+\frac{1}{m}+\frac{1}{n-2}+\frac{1}{m-2}\Big)\right)^2}\\
     & \hspace{0.2in} \leq \frac{C_1 \Theta_\rho^2(F,G)\Big(\frac{1}{{n}}+\frac{1}{{m}}\Big)+C_2\Big(\frac{1}{{n}}+\frac{1}{{m}}\Big)^2}{\left(\Theta_\rho^2(F,G)-\frac{1}{6\alpha}\Big(\frac{1}{n}+\frac{1}{m}+\frac{1}{n-2}+\frac{1}{m-2}\Big)\right)^2},
    \end{split}
\end{equation*}

This implies
$\limsup_{n,m,d\to\infty}\sup_{(F,G)\in\mathcal{F}(c\lambda(n,m))}\P_{F,G}^{n,m}\{T_{n,m}\leq c_{1-\alpha}\} \leq ({C_1 c+C_2})/{\left(c-\frac{1}{3\alpha}\right)^2}.$ 
One can notice that this upper bound is a decreasing function in $c$, and as $c$ grows to infinity, it goes to zero. Hence, for any $0<\zeta<1-\alpha$, there exists a constant $K_2>0$ such that this upper bound is smaller than $\zeta$. Now let $K(\alpha,\zeta) = \max\{K_1,K_2\}$. Then for $c > K(\alpha,\zeta)$ the maximum type II error rate is asymptotically upper bounded by $\zeta$. This establishes the theorem.
\end{proof}


\noindent
\begin{proof}[\bf Proof of Theorem \ref{High-Dimension-L2}]
Notice that if $F$ and $G$ are such that $\lim_{d\to\infty}\Theta_{\ell_2}^2(F,G)/\lambda({n,m})=\infty$, then from Theorem \ref{minimax-upper-bound}.  we have
$\lim_{d\to\infty}\P_{F,G}^{n,m}\{T_{n,m}\leq c_{1-\alpha}\} = 0.$
Hence the power of the test converges to 1.
\end{proof}


\noindent
\begin{proof}[\bf Proof of Theorem \ref{High-Dimenion-gdist}]
Using similar arguments as in the proofs of Theorems 4.1 and 4.2, one can show that if $h$ and $\psi$ are strictly increasing functions, then for testing $H_0:\Theta^2_{\varphi_{h,\psi}}(F, G) = 0$ against $H_1:\Theta^2_{\varphi_{h,\psi}}(F, G)>\epsilon$, the minimax separation rate is $\lambda({n,m}) = (1/\sqrt{n}+1/\sqrt{m})^2$, and the permutation test based on $T_{n,m}^{h,\psi}$ is minimax rate optimal. Hence, one gets a similar conclusion as in Theorem \ref{High-Dimension-L2}.  
\end{proof}


\noindent
\begin{proof}[\bf Proof of Proposition \ref{NSA-prop}]
Here, we use Lemma A.\ref{BDLB} to establish the condition of Theorem \ref{High-Dimension-L2} for different $\beta$.  Assume that ${\bf X}_1,{\bf X}_2\sim F = \prod_{i=1}^d \mathcal{N}_1(1/d^\beta,1)$,  ${\bf Y}_1,{\bf Y}_2\sim G= \prod_{i=1}^d \mathcal{N}_1(-1/d^\beta,1)$, and they are independent. Then
\begin{equation*}
    \begin{split}
        \Theta^2_{\ell_2}(F,G) & \ge \Big[\P\big(\|{\bf X}_1-{\bf Y}_1\|\leq \|{\bf Y}_2-{\bf Y}_1\|\big)-1/2\Big]^2+\Big[\P\big(\|{\bf Y}_1-{\bf X}_1\|\leq \|{\bf X}_2-{\bf X}_1\|\big)-1/2\Big]^2 \\
        & = \Big[\P\big(\|{\bf X}_1-{\bf Y}_1\|^2-\|{\bf Y}_2-{\bf Y}_1\|^2\leq 0\big)-1/2\Big]^2\\
        & ~~~~~~~~~~+\Big[\P\big(\|{\bf Y}_1-{\bf X}_1\|^2-\|{\bf X}_2-{\bf X}_1\|^2\leq 0\big)-1/2\Big]^2\\
        & = \bigg[\P\bigg(\sum_{i=1}^d T_iS_i\leq 0\bigg)-\frac{1}{2}\bigg]^2+\bigg[\P\bigg(\sum_{i=1}^d T'_iS'_i\leq 0\bigg)-\frac{1}{2}\bigg]^2,
    \end{split}
\end{equation*}
where $T_i = X_{1i}-Y_{2i}$, $S_i = X_{1i}+Y_{2i}-2Y_{1i}$, $T_i' = Y_{1i}-X_{2i}$ and $S_i'= Y_{1i}+X_{2i}-2X_{1i}$ ($i=1,2,\ldots,d$). Clearly, $T_i, S_i$ are independent, and so are $T_i', S_i'$. Here $S_1, S_2,\ldots, S_d\stackrel{iid}{\sim}N(\frac{2}{d^\beta},6)\big)$, and $S_i^{'}$ has the same distribution as $-S_i$ for all $i=1,2,\ldots,d$. Now,
\begin{equation*}
    \begin{split}
        & \Big[\P\big(\|{\bf X}_1-{\bf Y}_1\|\leq \|{\bf Y}_2-{\bf Y}_1\|\big)-1/2\Big]^2+\Big[\P\big(\|{\bf Y}_1-{\bf X}_1\|\leq \|{\bf X}_2-{\bf X}_1\|\big)-1/2\Big]^2 \\
        & ~~~~~~= \left[\E\Bigg\{\Phi\Bigg(-\frac{2}{d^\beta}\frac{\sum_{i=1}^d S_i}{\sqrt{2\sum_{i=1}^d S_i^2}}\Bigg)\Bigg\}-\frac{1}{2}\right]^2+\left[\E\Bigg\{\Phi\Bigg(\frac{2}{d^\beta}\frac{\sum_{i=1}^d S_i^\prime}{\sqrt{2\sum_{i=1}^d {S_i^\prime}^2}}\Bigg)\Bigg\}-\frac{1}{2}\right]^2.\\
        & ~~~~~~= 2\left[\E\Bigg\{\Phi\Bigg(-\frac{2}{d^\beta}\frac{\sum_{i=1}^d S_i}{\sqrt{2\sum_{i=1}^d S_i^2}}\Bigg)\Bigg\}-\frac{1}{2}\right]^2 
    \end{split}
\end{equation*}
Hence, studying the behavior of $Z(\beta)={\big(2\sum_{i=1}^dS_i}\big)/{\big(d^{\beta}\sqrt{2\sum_{i=1}^d S_i^2}\big)}$ for different values of $\beta$ will yield the conditions for the consistency of our test. 

Note that $\sum_{i=1}^dS_i/d^{\beta+1/2}\sim N(2/d^{2\beta-1/2},6/d^{2\beta})$. Hence for $\beta<1/4$, $\frac{1}{d^{\beta+1/2}}\sum_{i=1}^dS_i \stackrel{P}{\rightarrow} \infty$  and $\sum_{i=1}^d S_i^2/d \stackrel{P}{\rightarrow} 6$. So, for $\beta<1/4$, $Z(\beta)\stackrel{P}{\rightarrow} \infty$. For $\beta=1/4$, $\sum_{i=1}^dS_i/d^{\beta+1/2} \stackrel{P}{\rightarrow} 2$. So, $Z(\beta) \stackrel{P}{\rightarrow} 2/\sqrt{3}$. Therefore, for $\beta\leq 1/4$, we have 
$$\liminf_{d\to\infty}\left[\E\Bigg\{\Phi\Bigg(-\frac{2}{d^\beta}\frac{\sum_{i=1}^d S_i}{\sqrt{2\sum_{i=1}^d S_i^2}}\Bigg)\Bigg\}-\frac{1}{2}\right]^2>0,$$
which in turn implies that $\liminf_{d\to\infty}\Theta_{\ell_2}^2(F,G)>0$. This proves Proposition \ref{NSA-prop}{(a)}. 

\vspace{0.05in}
For $1/4<\beta<1/2$, notice that $d^{2\beta-1/2}\sum_{i=1}^dS_i/d^{\beta+1/2}\sim N(2,6d^{2\beta-1})$. So, as $d$ tends to infinity,  $d^{2\beta-1/2}\sum_{i=1}^dS_i/d^{\beta+1/2} \stackrel{P}{\rightarrow} 2$. Now, if we take $n\asymp m\asymp d^\gamma$, to match this convergence rate so that $\Theta_{\ell_2}^2(F,G)/\lambda({n,m})$ diverges to infinity, we require the following
\begin{align*}
\lim_{d\to\infty}d^{\gamma}&\left[\E\Bigg\{\Phi\bigg(-\frac{2}{d^{2\beta-1/2}}\frac{d^{\beta-1}\sum_{i=1}^dS_i}{\sqrt{2\sum_{i=1}^d S_i^2/d}}\bigg)\Bigg\}-\frac{1}{2}\right]^2
= \infty.
\end{align*}

This is possible when $\gamma>4\beta-1$. Note that for $\beta=1/2$,  $\sum_{i=1}^dS_i/d^{1/2}$ forms a tight sequence. In this case, we need
\begin{align*}
\lim_{d\to\infty}d^{\gamma} &\left[\E\Bigg\{\Phi\bigg(-\frac{2}{d^{\beta}}\frac{d^{-1/2}\sum_{i=1}^dS_i}{\sqrt{2\sum_{i=1}^d S_i^2/d}}\bigg)\Bigg\}-\frac{1}{2}\right]^2
= \infty,
\end{align*}
which is satisfied when $\gamma>1 = 4\beta-1$. This proves Proposition \ref{NSA-prop}{(b)}.

\vspace{0.05in}
Now for $\beta>1/2$, we have $\sum_{i=1}^dS_i/d^{1/2}\sim N(2/d^{\beta-1/2},6)$, and hence it is a tight sequence of random variables. In this scenario we require
\begin{align*}\lim_{d\to\infty}d^{\gamma}
& \left[\E\Bigg\{\Phi\bigg(-\frac{2}{d^{\beta}}\frac{d^{-1/2}\sum_{i=1}^dS_i}{\sqrt{2\sum_{i=1}^d S_i^2/d}}\bigg)\Bigg\}-\frac{1}{2}\right]^2
= \infty,
\end{align*}
which is satisfied if $\gamma>2\beta$. Also notice that when $\beta>1/2$ and $\gamma<2\beta-1$, the Kullback-Leibler Divergence {($KL(Q_1,Q_0) \asymp d^{\gamma-2\beta+1}$)} converges to zero with increasing dimensions. Hence in this scenario, the asymptotic type II error rate of any test remains bounded below by $1-\alpha$, i.e., no tests have asymptotic power more than the nominal level $\alpha$. This completes the proof of Proposition \ref{NSA-prop}{(c)}.
\end{proof}

\end{document}